\numberwithin{equation}{section}
\newtheorem{theorem}{Theorem}[section]
\newtheorem{corollary}[theorem]{Corollary}
\newtheorem{lemma}[theorem]{Lemma}
\theoremstyle{definition}
\newtheorem{definition}[theorem]{Definition}
\theoremstyle{notation}
\theoremstyle{theorem}
\newtheorem{proposition}[theorem]{Proposition}
\theoremstyle{definition}
\theoremstyle{definition}
\newtheorem{remark}[theorem]{Remark}
\newtheorem*{remark*}{Remark}
\newcommand{\X}{X^{2n+2}_k}
\newcommand{\Y}{Y^{2n+2}_k}
\newcommand{\XX}{X^{2n+2}_{2k}}
\newcommand{\YY}{Y^{2n+2}_{2k}}
\newcommand{\Z}{Z^{2n+2}}
\newcommand{\cC}{\mathcal{C}}
\newcommand{\cD}{\mathcal{D}}
\newcommand{\cE}{\mathcal{E}}
\newcommand{\cW}{\textup{WFuk}}
\newcommand{\hocolim}{\textup{hocolim}}
\newcommand{\Cone}{\textup{Cone}}
\newcommand{\Mod}{\textup{Mod}}
\newcommand{\Hom}{\textup{Hom}}
\newcommand{\Tw}{\textup{Tw}}
\title{Exotic families of symplectic manifolds with Milnor fibers of $ADE$-type}
\author{Dongwook Choa \thanks{dwchoa@kias.re.kr}}
\author[$\dagger$]{Dogancan Karabas \thanks{dogancan.karabas@northwestern.edu }}
\author[$\ddagger$]{Sangjin Lee \thanks{sangjinlee@ibs.re.kr}}
\affil{Department of Mathmatics, Korea Institute for Advanced Study, Seoul 02455, Korea}
\affil[$\dagger$]{Department of Mathematics, Northwestern University, 2033 Sheridan Rd, Evanston, IL 60208, United States} 
\affil[$\ddagger$]{Center for Geometry and Physics, Institute for Basic Science (IBS), Pohang 37673, Korea}
\begin{document}

\maketitle

\noindent{\bf Abstract}

In this paper, we give infinitely many diffeomorphic families of different Weinstein manifolds. 
The diffeomorphic families consist of well-known Weinstein manifolds which are the Milnor fibers of $ADE$-type, and Weinstein manifolds constructed by taking the end connected sums of Milnor fibers of $A$-type.
In order to distinguish them as Weinstein manifolds, we study how to measure the number of connected components of wrapped Fukaya categories.
%

\noindent{\bf Keywords} 
Exotic Weinstein structures, Milnor fiber of simple singularity, Weinstein handle, Lefschetz fibration, Wrapped Fukaya category, Symplectic cohomology

\noindent{\bf Mathematics Subject Classification} 53D37 (Symplectic aspects of mirror symmetry, homological mirror symmetry, and Fukaya category), 57R17 (Symplectic and contact topology in high or arbitrary dimension), 32Q28 (Stein manifolds)

\tableofcontents

\section{Introduction}
\label{section introduction}
\subsection{Introduction}
The constructions of exotic Weinstein manifolds, i.e., diffeomorphic manifolds having different Weinstein structures, have been studied extensively.
See, for example, \cite{McLean} or \cite{Abouzaid-Seidel}.
In this paper, we investigate Weinstein manifolds which are exotic to the Milnor fibers of simple singularities. 

We construct different Weinstein manifolds by attaching the same Weinstein handles to the same Weinstein manifold in different ways. 
However, as smooth handles, they are attached in the same way. 
Thus, the construction gives diffeomorphic, but different Weinstein manifolds. 

We note that the idea has been used historically. 
See \cite{Maydanskiy}, \cite{Maydanskiy-Seidel}, and \cite{Abouzaid-Seidel}.
The main difference between the previous works and the current paper is that, for some particular Weinstein manifolds, we will identify {\em plumbings} and {\em end connected sums} procedures as the same smooth handle attachments.
However, they are different as Weinstein handle attachments.  
Thus, one can show that, for example, the Milnor fiber of $A_{k+1}$ singularity and the end connected sum of the Milnor fiber of $A_k$ singularity and a cotangent bundle of the sphere are diffeomorphic but different as Weinstein manifolds.  

What allows us to distinguish a plumbing and an end connected sum as Weinstein manifolds, is the {\em number of connected components of the wrapped Fukaya categories}. 
Since the end connected sum procedure increases the number of connected components, but the plumbing procedure does not, two procedures lead to the different Weinstein manifolds.  
Thus, we can distinguish different Weinstein manifolds without using vanishing wrapped Fukaya categories or without using symplectic homologies.
The first is used in \cite{Maydanskiy}, \cite{Maydanskiy-Seidel}, and the second is used in \cite{McLean}, \cite{Abouzaid-Seidel}. 

The present paper contains two generalizations of the above construction of exotic pairs. 
More specific results will be stated in Section \ref{subsection results}.

\subsection{Results}
\label{subsection results}
In Sections \ref{section construction} -- \ref{section diffeo}, we observe that a plumbing procedure and an end connected sum procedure can be realized as the same smooth handle attachments, but different Weinstein handle attachments. 

More precisely, we prove Theorem \ref{thm rough statement}.
\begin{theorem}[Technical statement is Theorem \ref{thm main}]
	\label{thm rough statement}
	Let $n$ be an odd integer.
	The Milnor fiber of $A_{k+1}$ type of dimension $2n$, which is obtained by plumbing $T^*S^n$ to the Milnor fiber of $A_k$ type, is diffeomorphic to the end connected sum of the Milnor fiber of $A_k$ type and $T^*S^n$. 
	Meanwhile, they are different as Weinstein manifolds.
\end{theorem}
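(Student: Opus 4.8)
The plan is to split the statement into its smooth and symplectic halves and attack them with different tools. For the diffeomorphism, I would realize both manifolds as total spaces of Lefschetz fibrations over the disk with the same regular fiber, namely the lower-dimensional $A_1$ Milnor fiber $T^*S^{n-1}$, and with the same number of critical points whose vanishing cycles are each smoothly isotopic to the zero-section $S^{n-1}$. Since the underlying smooth manifold of such a fibration is built by attaching $n$-handles along framed attaching spheres determined by the vanishing cycles, and since the smooth total space depends on these vanishing cycles only through their framed smooth isotopy classes (together with the ordering, up to handle slides), the two fibrations have diffeomorphic total spaces. Equivalently, one may present each manifold by a single $2n$-dimensional $0$-handle together with $(k+1)$ $n$-handles and compare the attaching link diagrams directly.

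The parity hypothesis enters exactly here. Because $n$ is odd, every Lagrangian $S^n$ has vanishing self-intersection, so all the relevant framings are trivial and the middle-dimensional intersection form is skew-symmetric; this is what allows the chain of attaching spheres coming from the plumbing to be converted, by isotopy and handle slides, into the split configuration coming from the end connected sum. I would verify that the two skew-symmetric intersection forms — the path-graph form of the $A_{k+1}$ plumbing and the direct sum of the $A_k$ form with a zero block from the disjoint $T^*S^n$ — indeed agree, and then promote this algebraic matching to an honest diffeomorphism by exhibiting the handle slides explicitly. Checking that these slides close up to a genuine diffeomorphism rather than merely an $h$-cobordism or a homotopy equivalence is the first place where care is required.

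The symplectic half is where the genuinely new input lies, and here I would avoid symplectic cohomology altogether and instead count connected components of the wrapped Fukaya category $\cW$. Concretely, $\cW$ is generated by the cocores of the critical handles, and I would form the graph whose vertices are these generating objects and whose edges record the existence of a nonzero morphism (or nonzero product) between them; the number of connected components of this graph is the proposed invariant. Its invariance under Weinstein homotopy follows from the fact that such deformations induce quasi-equivalences of $\cW$, and quasi-equivalences preserve the decomposition of a category into mutually non-interacting pieces. It then remains to compute: for the plumbing the cocore of the newly attached handle meets the $A_k$-part in a single transverse intersection point, producing a nonzero structure map that links it to the rest, so $\cW$ has one component; for the end connected sum the cocore of the $T^*S^n$-summand lies in a region separated from the $A_k$-part by the connecting neck, all morphisms between the two families of cocores vanish, and $\cW$ has two components.

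The main obstacle I anticipate is making this component count both well-defined and computable with the claimed values. On the invariance side one must check that the number of components of $\cW$ is intrinsic, independent of the choice of generating cocores, and stable under the $A_\infty$ and quasi-equivalence ambiguities. On the computational side the crux is the vanishing of all morphisms across the connect-sum neck: this needs a neck-stretching or K\"unneth-type argument showing that no holomorphic strip or disk contributing to the wrapped differential or products can cross the separating hypersurface, together with the complementary non-vanishing statement for the plumbing, where one must identify the geometric intersection point as the source of a nonzero structure map. Establishing these two facts — disconnection for the end connected sum and connectivity for the plumbing — is the heart of the argument, and is precisely what separates the two Weinstein structures supported by the common underlying smooth manifold.
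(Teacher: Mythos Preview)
Your symplectic half is essentially the paper's argument: the invariant is the (in)decomposability of $\cW$ as a coproduct, which is what ``number of components'' ought to mean. But your graph-on-generators formulation is not well-defined as an invariant: if $\cC\simeq\cD\sqcup\cE$ and one takes as generators $D$, $E$, and $D\oplus E$, the graph becomes connected. The paper fixes this with a precise criterion: if a generating set $\{F_i\}$ satisfies $\Hom^*(F_i,F_j)\neq 0$ for all $i,j$ \emph{and} $\Hom^0(F_\ell,F_\ell)=k$ for all $\ell$, then $\Tw(\cC)$ is indecomposable. The second hypothesis is exactly what excludes a generator splitting across the two factors. For the plumbing the paper uses Lefschetz thimbles rather than cocores, and the non-vanishing of $\Hom^*(T_i,T_j)$ comes from an index-positivity argument, not from a single geometric intersection; the decomposition of $\cW$ for the end connected sum is obtained from a Weinstein sectorial covering and the descent theorem of Ganatra--Pardon--Shende, which is the rigorous version of your neck argument.

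The diffeomorphism half has a real gap. You cannot place both manifolds over the fiber $T^*S^{n-1}$: the end connected sum has no natural Lefschetz fibration with that fiber, and in any case for $n$ odd the sphere $S^{n-1}$ is even-dimensional, admits no nonvanishing vector field, and so the zero section cannot be displaced from itself as a Lagrangian --- there is no room in $T^*S^{n-1}$ for vanishing cycles that are smoothly but not Hamiltonian isotopic. The paper instead takes the larger fiber $A_2^{2(n-1)}$, containing two spheres $\alpha,\beta$, and writes the two manifolds as the total spaces of $(A_2;\alpha,\dots,\alpha,(\tau_\alpha)^{2k}(\beta),\beta)$ and $(A_2;\alpha,\dots,\alpha,\beta,\beta)$. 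The parity now enters one level deeper: $A_2^{2(n-1)}$ itself fibers with fiber $T^*S^{n-2}$, and since $n-2$ is odd, $S^{n-2}$ \emph{does} carry a nonvanishing vector field, which is used to build an explicit smooth Lagrangian isotopy from $\beta$ to $(\tau_\alpha)^{2k}(\beta)$. One must then still match the induced formal Legendrian structures on the attaching spheres, which live in $\pi_{n}(V_{n-1,2n-1},U_{n-1})$; this group is trivial for $n=3$ but is $\mathbb{Z}/2$ for odd $n\geq 5$, forcing the exponent $2k$ to be replaced by $4k$ there --- a dimension-dependent framing obstruction that your ``vanishing self-intersection implies trivial framing'' heuristic does not detect.
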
 
We note that the Weinstein manifolds given in Theorem \ref{thm rough statement} will be denoted by $\X$ and $\Y$ in Sections \ref{section construction} -- \ref{section diffeo}.

After proving Theorem \ref{thm rough statement}, we generalize it in two different ways. 
The first generalization is to consider the end connected sums of multiple Milnor fibers of $A$ type. 

\begin{theorem}[Technical statement is Theorem \ref{thm exotic families of Z}]
	\mbox{}
	\begin{enumerate}
		\item If $n = 3$, then 
		\begin{gather*}
			\{A_{2i_1 + \cdots + 2i_{k-1} + i_k}, A_{2i_1} \#_e A_{2i_2 + \cdots + 2i_{k-1} + i_k}, A_{2i_1} \#_e A_{2i_2} \#_e A_{2i_3 + \cdots + 2i_{k-1} + i_k},\\ 
			\cdots, A_{2i_1} \#_e A_{2i_2} \#_e \cdots \#_e A_{2i_{k-1}} \#_e A_{i_k} \} 
		\end{gather*}
		is an exotic family of different Weinstein manifolds where $A_k$ is the $2n$ dimensional Milnor fiber of $A_k$ type, and where $\#_e$ means the end connected sum.
		\item If $n \geq 5$ is odd, then 
		\begin{gather*}
			\{A_{4i_1 + \cdots + 4i_{k-1} + i_k}, A_{4i_1} \#_e A_{4i_2 + \cdots + 4i_{k-1} + i_k}, A_{4i_1} \#_e A_{4i_2} \#_e A_{4i_3 + \cdots + 4i_{k-1} + i_k},\\
			\cdots, A_{4i_1} \#_e A_{4i_2} \#_e \cdots \#_e A_{4i_{k-1}} \#_e A_{i_k} \} 
		\end{gather*}
		is an exotic family of different Weinstein manifolds.
	\end{enumerate}
\end{theorem}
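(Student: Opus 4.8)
The plan is to separate the proof into two independent pillars. The first shows that every manifold appearing in the list is diffeomorphic to the single-block Milnor fiber $A_{c_1 + \cdots + c_k}$, where $c_j = 2i_j$ (resp. $4i_j$) for $j < k$ and $c_k = i_k$; the second shows that the number of connected components of the wrapped Fukaya category strictly increases along the list, so that the members are pairwise inequivalent as Weinstein manifolds. For the diffeomorphism pillar I would reduce everything to the single splitting $A_{a+b} \cong A_a \#_e A_b$ and then compose: peeling off $A_{c_1}$, then $A_{c_2}$, and so on, identifies $A_{c_1 + \cdots + c_k}$ with $A_{c_1} \#_e \cdots \#_e A_{c_k}$, and truncating the composition after $j$ steps produces exactly the $j$-th entry of the family.

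To establish the single splitting I would use that the $2n$-dimensional Milnor fiber $A_m$ is $(n-1)$-connected, so both $A_{a+b}$ and $A_a \#_e A_b$ fall under Wall's classification of $(n-1)$-connected $2n$-manifolds. For $n$ odd this classification is governed by the skew-symmetric intersection form on $H_n$ together with a quadratic refinement $q \colon H_n \to \mathbb{Z}/2$. The forms of $A_{a+b}$ and $A_a \oplus A_b$ have equal rank as soon as the detached block $A_a$ has even size (the only way the ranks can fail to add is when both blocks are odd), which is exactly the coefficient condition for $n = 3$; and since the Milnor fibers are parallelizable the quadratic refinement plays no role there because $bP_6 = 0$, so matching the skew form already forces a diffeomorphism. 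For $n \geq 5$ one must in addition match the quadratic refinement, and a direct computation of the Arf invariant of the $A_m$-Milnor lattice shows that the additivity $\mathrm{Arf}(A_{a+b}) = \mathrm{Arf}(A_a) + \mathrm{Arf}(A_b)$ holds precisely when $a \equiv 0 \pmod 4$; this is the origin of the coefficient $4$. With the form and its refinement matched, Wall's theorem delivers the diffeomorphism, the boundaries being determined by the same algebraic data.

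For the Weinstein pillar I would invoke the paper's component-counting machinery. A single block $A_c$ has a connected wrapped Fukaya category, because its vanishing cycles form a linear $A_c$-chain whose consecutive cocores are joined by morphisms, so the category admits no nontrivial orthogonal decomposition. The end connected sum glues two summands only along a subcritical handle at infinity, and the machinery shows that this operation is additive on the number of connected components. Hence the $j$-th member, assembled from $j+1$ blocks, has exactly $j+1$ components; since the integers $1, 2, \ldots, k$ are pairwise distinct, no two members can be equivalent as Weinstein manifolds.

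The hard part will be the explicit Arf-invariant computation underlying the diffeomorphism pillar for $n \geq 5$. One has to evaluate the quadratic refinement of the Milnor lattice of $A_m$ — equivalently, the Kervaire invariant of the Brieskorn sphere $\Sigma(m+1, 2, \ldots, 2)$ — and verify that it becomes additive exactly under divisibility of the detached block by $4$, uniformly across all odd $n \geq 5$. In particular one must check the Kervaire-invariant-one dimensions, where $bP_{2n}$ vanishes and the divisibility could a priori be relaxed, so that the single uniform coefficient $4$ still yields a correct (if not optimal) statement. Once this computation is in place, the reduction to a single splitting, Wall's classification, and the additivity of the component count assemble routinely into the full theorem.
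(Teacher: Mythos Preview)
Your Weinstein pillar is exactly what the paper does: the end connected sum decomposes the wrapped Fukaya category as a coproduct (Lemma \ref{lemma wrapped Fukaya of Y} and Remark \ref{rmk facts}), each $\cW(A_c)$ is indecomposable by the thimble argument of Lemma \ref{lemma decomposable} and Proposition \ref{prop not symplectomorphic}, and the component count separates the members of the list.

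Your diffeomorphism pillar, however, is a genuinely different route. The paper never invokes Wall's classification, intersection forms, or Arf/Kervaire invariants. Instead it works entirely inside Lefschetz fibrations: it realises both $A_{a+b}$ and $A_a \#_e A_b$ as total spaces over the same fiber $A_2^{2n}$, differing only in that one vanishing cycle is $\beta$ and the other is $\tau_\alpha^{2k}(\beta)$ (resp.\ $\tau_\alpha^{4k}(\beta)$). A smooth Lagrangian isotopy between these two spheres is built by hand using a nowhere-vanishing vector field on $S^{n-1}$ (this is where the parity of $n$ enters), and the only homotopy-theoretic input is the group of formal Legendrian structures $\pi_{n+1}(V_{n,2n+1},U_n)$, which is trivial in the low case and $\mathbb{Z}/2$ otherwise; the latter forces one to square the twist, producing the coefficient $4$. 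The general statement then follows by iterating this single move via Hurwitz moves and stabilisations (Lemma \ref{lemma diffeomorphic family of Z}). This is more elementary and uniform than your proposal: it avoids any case analysis over $\pi_{n-1}(SO(n))$ and any computation of Kervaire invariants of Brieskorn boundaries.

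Your approach could in principle be made to work, but two points deserve care. First, the reason the refinement is vacuous in the low case is not that $bP_6=0$ but that $\pi_2(SO(3))=0$, so Wall's normal-bundle datum $\alpha\colon H_n\to\pi_{n-1}(SO(n))$ is automatically zero; the $bP$-group governs the boundary sphere, not the handlebody classification. Second, for higher odd $n$ you would need to verify not merely an additivity of Arf invariants but an actual isomorphism of Wall's triples $(H,\lambda,\alpha)$; for the exceptional values $n=7$ (and possibly others) where $\pi_{n-1}(SO(n))$ degenerates, the structure of $\alpha$ changes and your Arf argument does not literally apply, though the conclusion survives because the datum becomes trivial. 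None of this is needed if you follow the paper's isotopy argument.
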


The second generalization is obtained by comparing Milnor fibers of different simple singularities. 
The detailed results are given in Theorems \ref{thm exotic families} and \ref{thm exotic families 2}.

\begin{theorem}
	\label{thm exotic families}
	Let $n =2$. 
	Then, the following families of $(2n+2)=6$ dimensional manifolds have a same diffeomorphism type, but they are pairwise different as Weinstein manifolds. 
	\begin{itemize}
		\item The Milnor fibers of $A_6$ and $E_6$-singularities.
		\item The Weinstein manifold $Q_7^{2n+2}$ and the Milnor fibers of $A_7$, $E_7$, and $D_7$-singularities.
		\item The Milnor fibers of $A_8$ and $E_8$-singularities.
		\item For any $m \geq 3$, the Minor fiber of $D_{m+1}$-singularity and the Weinstein manifold $Q_m^{2n+2}$. 
		\item For $k \geq 2$, the Milnor fibers of $A_{2k+1}, D_{2k+1}$-singularities and the Weinstein manifold $Q_{2k+1}^{2n+2}$.
	\end{itemize} 
\end{theorem}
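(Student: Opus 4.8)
The proof has two independent goals: (i) produce a diffeomorphism between the members of each listed family, and (ii) show that no two members are equivalent as Weinstein manifolds. For (i), observe that every manifold on each list is a $2$-connected open $6$-manifold built from $D^6$ by attaching Weinstein handles of index $n+1=3$, one for each vanishing cycle; the number of such handles equals the Milnor number, which is the Dynkin rank for the $ADE$ fibers and is arranged to match for the $Q_m$'s. The attaching spheres are copies of $S^2 \subset S^5$, which sit in codimension $3$ and are therefore unknotted with a unique normal framing, so the smooth handlebody is determined solely by how the attaching and belt spheres link, i.e. by the skew-symmetric intersection form on $H_3$. By the classification of $2$-connected $6$-manifolds, the congruence class of this form is a complete smooth invariant, and for a plumbing along a tree it is governed by the tree's maximum matching number (giving rank $2\nu$, with trivial elementary divisors). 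The plan is to invoke the explicit handle computations of Sections \ref{section construction}--\ref{section diffeo} to verify that, within each family, both the Milnor number and the skew form coincide, which yields the diffeomorphisms.

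For (ii), the primary tool is the invariant developed in this paper: the number of connected components of the wrapped Fukaya category $\cW$ (equivalently, the number of idempotents/primitive summands detected in degree $0$). Each $ADE$ Milnor fiber is a plumbing of copies of $T^*S^3$ along its \emph{connected} Dynkin tree, so consecutive vanishing spheres meet transversally in a single point and hence have nonzero Floer morphisms; it follows that $\cW$ is connected, with exactly one component. The manifolds $Q_m$, by contrast, are assembled using an end connected sum, and since the end connected sum strictly increases the number of connected components while the plumbing does not, $\cW(Q_m)$ has strictly more components than any of the $ADE$ fibers in its family. This immediately separates each $Q_m$ from every $ADE$ member appearing alongside it.

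It remains to distinguish the $ADE$ fibers from one another (e.g. $A_7$ versus $D_7$ versus $E_7$, or $A_6$ versus $E_6$), all of which have connected $\cW$ and so are invisible to the component count. For this I would pass to the finer Weinstein invariant given by symplectic cohomology $SH^*$, the Hochschild cohomology of $\cW$. The symplectic cohomologies of Milnor fibers of distinct simple singularities of equal rank are pairwise distinct: this reflects their dependence on the Milnor monodromy, whose Coxeter element has type-dependent order and eigenvalues, and can either be computed directly or quoted from the known computations. Comparing the resulting graded rings (or their Poincaré polynomials) across the $ADE$ types then rules out Weinstein equivalence among the remaining members.

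The main obstacle is the interface between the two invariants. The delicate part is proving rigorously that the end connected sum used to build $Q_m$ genuinely splits $\cW$ into more components -- this requires a computable model for $\cW$ (cocores together with the Chekanov--Eliashberg DGA, or a Ginzburg-algebra presentation) and control of how that model behaves under end connected sum -- while simultaneously confirming that the $ADE$ fibers stay connected. Because the component count is blind to the $ADE$-versus-$ADE$ distinction, one must also carry out or carefully import the $SH^*$ computations and align all gradings in the specific case $n=2$; keeping these two layers of invariants consistent is where the real work lies.
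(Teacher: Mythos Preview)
Your overall architecture—separate the diffeomorphism from the Weinstein inequivalence—is right, but both halves diverge from the paper and each has a genuine gap.

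For (i), the paper does \emph{not} appeal to any classification of $2$-connected $6$-manifolds or to the congruence class of the intersection form. Instead (Lemmas~\ref{lemma diffeomorphic families 1} and~\ref{lemma diffeomorphic families 2}) it realizes each manifold as the total space of an explicit abstract Lefschetz fibration with fiber $A_2^{2n}$, then manipulates the ordered tuple of vanishing cycles by Hurwitz moves, cyclic permutations, and stabilizations until two members of a family differ only by replacing one vanishing cycle $\beta$ by $\tau_\alpha^{2}(\beta)$; the smooth isotopy constructed in the proof of Proposition~\ref{prop diffeo}, together with the triviality of $\pi_3(V_{2,5},U_2)$, then gives the diffeomorphism. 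Your intersection-form route is plausible in spirit, but the classification you invoke is Wall's theorem for \emph{closed} manifolds; for these open handlebodies you would need a separate statement, and after the first $3$-handle the attaching spheres no longer sit in $S^5$, so the ``codimension-$3$ unknotting plus $\pi_2(SO(3))=0$'' argument does not run as written. Sections~\ref{section construction}--\ref{section diffeo} contain no computation of skew forms for $D_k$, $E_k$, or $Q_m$, so invoking them does not close this gap.

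For (ii), your plan to separate $Q_m$ from the $ADE$ fibers by the component count of $\cW$ is the strategy of Sections~\ref{section not sympelcto}, but your argument that $\cW$ of a $D_k$ or $E_k$ fiber is indecomposable is not valid as stated: the zero-section spheres do \emph{not} generate the wrapped Fukaya category, so their intersecting in a point does not force connectivity of $\cW$. One would have to work with cocores and verify the hypotheses of Lemma~\ref{lemma decomposable} for them—something the paper carries out only for $A_k$ (Lemma~\ref{lemma Ak wrapped Fukaya}). The paper therefore does \emph{not} use the component count here. In Section~\ref{subsection exotic families} it distinguishes all the manifolds simultaneously via symplectic cohomology, computed through homological mirror symmetry (Proposition~\ref{prop SH computation}, citing Lekili--Ueda): the $ADE$ types are separated by the twisted-sector contributions to $HH^*$ of the mirror matrix-factorization category, and $Q_m$ is separated from every $ADE$ fiber by exhibiting in $SH^*(Q_m)\simeq SH^*(A_{m-1})\times SH^*(T^*S^{n+1})$ an element of degree $2n$ and weight $2n$ (a product of two weight-$n$ classes) that has no counterpart in the $SH^*$ of any $A$, $D$, or $E$ fiber. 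So your two-tier scheme (component count, then $SH^*$) is replaced in the paper by a single $SH^*$ computation carrying an extra weight grading.
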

In Section \ref{section exotic familes of plumbings with names}, $Q_m^{2n}$ will be defined as an end connected sum of the Milnor fiber of $A_m$ singularity and $T^*S^n$. 

\begin{theorem}
	\label{thm exotic families 2}
	Let $n \geq 4$ be an even number. 
	Then, the following pairs of $(2n+2)$ dimensional manifolds have a same diffeomorphic type, but they are pairwise different as Weinstein manifolds. 
	\begin{itemize}
		\item The Milnor fiber of $E_7$-singularity and $Q_7^{2n+2}$.
		\item The Milnor fibers of $A_8$ and $E_8$-singularities.  
		\item The Milnor fiber of $A_{4k+1}$-singularity and $Q_{4k+1}^{2n+2}$.
		\item The Milnor fiber of $D_{4k+2}$-singularity and $Q_{4k+2}^{2n+2}$. 
		\item The Milnor fibers of $A_{4k+3}$ and $D_{4k+3}$-singularities.
	\end{itemize}
\end{theorem}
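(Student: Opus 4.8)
The plan is to dispatch each of the five listed pairs in two stages, exactly as for Theorem~\ref{thm main} and its first generalization Theorem~\ref{thm exotic families of Z}: first produce a smooth diffeomorphism between the two members by realizing them as a common sequence of smooth handle attachments, and then separate them as Weinstein manifolds by comparing the numbers of connected components of their wrapped Fukaya categories $\cW$. I would present each Milnor fiber and each $Q_m^{2n+2}$ as a Weinstein handlebody obtained from $D^{2n+2}$ by attaching $(n+1)$-handles along a framed Legendrian link in $S^{2n+1}$ prescribed by the relevant plumbing graph; since $n$ is even here, the core Lagrangian spheres are the odd-dimensional $S^{n+1}$. The genuinely new features relative to the earlier sections are the appearance of the $D$- and $E$-type graphs and the even parity of $n$.

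For the diffeomorphism stage I would reduce each listed identity to a finite sequence of smooth handle slides on the attaching link. The three pairs containing a $Q$-manifold --- $E_7$ versus $Q_7^{2n+2}$, $A_{4k+1}$ versus $Q_{4k+1}^{2n+2}$, and $D_{4k+2}$ versus $Q_{4k+2}^{2n+2}$ --- reduce to the basic move of Theorem~\ref{thm main}, that a terminal plumbing of a single $T^*S^{n+1}$ is a smooth end connected sum; iterating and composing this move straightens the exceptional vertex or terminal fork into the chain-plus-handle shape underlying $Q_m^{2n+2}$. The two all-plumbing pairs, $A_8$ versus $E_8$ and $A_{4k+3}$ versus $D_{4k+3}$, are handled instead by slides that straighten the trivalent $D$- or $E$-vertex into a linear $A$-chain. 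The parity of $n$ enters only through the framing bookkeeping: each slide alters the framing of an attaching sphere by a fixed element of the relevant homotopy group, whose order is $4$ for even $n \geq 4$, and this is exactly what forces the admissible indices to advance in steps of $4$, in contrast with the steps of $2$ seen for small $n$ in Theorem~\ref{thm exotic families of Z}. Verifying that the accumulated framing change is trivial after the slides are composed is the routine, if bookkeeping-heavy, part of this stage.

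For the Weinstein stage I would compute $\cW$ of each member from the Legendrian surgery formula, so that $\cW$ is generated by the cocores of the $(n+1)$-handles with morphisms read off from the Reeb chords of the Hopf-linked attaching spheres, and I would invoke that the number of connected components of $\cW$ is a Weinstein-homotopy invariant. The three $Q$-pairs follow directly from the method of the earlier sections: the end connected sum contributes a $T^*S^{n+1}$ summand whose attaching sphere is unlinked from the rest, so it forms its own block of $\cW$ and the component count of $Q_m^{2n+2}$ strictly exceeds that of the connected plumbing it is matched with. The delicate cases are the two all-plumbing pairs, where both members are connected plumbings, so that a naive count returns the same value and the even parity of $n$ must be exploited. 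Here I would compute the grading of the lowest connecting morphism between adjacent cocores --- controlled by $HW^*(C_i,C_i)\cong H_{-*}(\Omega S^{n+1})$, whose generator lies in the even degree $n$ --- and show that for even $n$ the data governing the block decomposition of $\cW$ is supported only on a subgraph of the Dynkin diagram, whose number of connected components then depends on whether the diagram is a chain (type $A$) or carries a trivalent vertex (types $D$ and $E$).

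I expect the main obstacle to be precisely this last point: making rigorous the claim that a connected Dynkin graph can yield a wrapped Fukaya category with more than one component when $n$ is even, by determining exactly which Reeb chords survive into the degree that controls the decomposition, and then checking that the chain and the trivalent diagram in each pair produce genuinely different counts, independent of the choices in the surgery presentation. This is the step where developing a usable measure of the number of connected components --- rather than reading it off naively from the graph --- is essential, and where the hypothesis that $n$ is even, not merely large, does the real work. By comparison, the diffeomorphism stage should be a careful but standard exercise in framed handle calculus once the period-$4$ framing input is in hand.
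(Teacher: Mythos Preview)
Your Weinstein stage contains a genuine gap for the two all-plumbing pairs, $A_8$ versus $E_8$ and $A_{4k+3}$ versus $D_{4k+3}$. The mechanism you sketch --- that for even $n$ the ``block decomposition'' of $\cW$ is supported only on a proper subgraph of the Dynkin diagram, so that a chain and a trivalent graph yield different component counts --- does not hold. For a connected tree plumbing of copies of $T^*S^{n+1}$, adjacent cocores are Hopf-linked and satisfy $HW^*(C_i,C_j)\neq 0$; a nonzero morphism in \emph{any} degree obstructs a direct-sum decomposition, so the argument of Lemma~\ref{lemma decomposable} (or any reasonable variant) shows $\cW$ is indecomposable regardless of the parity of $n$. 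The grading of the lowest connecting chord is irrelevant to the component count. The paper says this explicitly in the introduction: decomposability of $\cW$ cannot prove Theorems~\ref{thm exotic families} and~\ref{thm exotic families 2} completely. Instead it switches to symplectic cohomology: via Lekili--Ueda, $SH^*$ of an $ADE$ Milnor fiber is identified with $HH^*$ of an equivariant matrix factorization category of the transpose polynomial, and the twisted sectors together with the weight grading separate $A_k$, $D_k$, $E_k$ from one another. For the $Q$-pairs, the product decomposition $SH^*(Q_m^{2n+2})\simeq SH^*(A_{m-1}^{2n+2})\times SH^*(T^*S^{n+1})$ yields an element of weight $2n$ and degree $2n$ absent from the Milnor fibers.

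A secondary point: even among the three $Q$-pairs, your plan needs the indecomposability of $\cW$ for the $E_7$ and $D_{4k+2}$ Milnor fibers, but the paper only established this for $A_k$ (Lemma~\ref{lemma Ak wrapped Fukaya} and Proposition~\ref{prop not symplectomorphic}); it avoids the issue by using $SH^*$ uniformly. On the diffeomorphism side, your framed-handle-slide picture is in spirit the same as the paper's, but the paper carries it out through abstract Lefschetz fibrations over $A_2^{2n}$ and Hurwitz moves (Lemmas~\ref{lemma diffeomorphic families 1} and~\ref{lemma diffeomorphic families 2}), with the period-$4$ input coming from $\pi_{n+1}(V_{n,2n+1},U_n)\simeq\mathbb{Z}/2$ (so $\tau_\alpha^{4}$ induces the trivial change of formal Legendrian structure, as in Proposition~\ref{prop diffeo}), rather than from a framing group of order $4$.
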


The decomposability of wrapped Fukaya categories cannot prove Theorems \ref{thm exotic families} and \ref{thm exotic families 2} completely.
Instead, we compare the symplectic cohomologies of the given Weinstein manifolds. 

\begin{remark}
	\label{rmk more generalization}
	We would like to point out that one can generalize the results of this paper easily. 
	For example, by using the same technique, one can prove the following:
	Let $W$ be a Weinstein manifold obtained by plumbing multiple $T^*S^n$ along a tree $T$ such that 
	\begin{itemize}
		\item $n \geq 3$ is odd, and 
		\item $T$ has the Dynkin tree of $A_5$ type as a subtree. 
	\end{itemize}
	Then, there are two Weinstein manifolds $W_1$ and $W_2$ such that
	\begin{itemize}
		\item the end connected sum $W_1 \#_e W_2$ is exotic to $W$,
		\item one can break the tree $T$ into two sub trees $T_1$ and $T_2$ so that $W_i$ is a plumbing of $T^*S^n$ along a tree $T_i$ for $i =1, 2$. 
	\end{itemize}
	
	We omit these generalizations for the sake of conciseness. 		
\end{remark}

This paper consists of seven sections.
The first two sections are the introduction and the preliminaries. 
In Sections \ref{section construction} -- \ref{section diffeo}, we prove Theorem \ref{thm rough statement}. 
Section \ref{section a simple extension} (resp.\ \ref{section exotic familes of plumbings with names}) contains the first (resp.\ second) generalization. 

\subsection{Acknowledgment}
\label{subsect acknoledgment} 
The authors appreciate Youngjin Bae and Hanwool Bae for the helpful discussions. 
We also appreciate Yanki Lekili for his comments on the draft of the present paper.

The first (resp.\ last) named author has been supported by a KIAS individual grant (MG079401) (resp.\ the Institute for Basic Science (IBS-R003-D1)).

\section{Preliminaries}
\label{section preliminaries}
Sections \ref{subsect Lefschetz fibration} and \ref{subsection equivalent abstract Lefschetz fibrrations} contain the definition of Lefschetz fibrations, the definition of abstract Lefschetz fibrations, the notion of stabilization, and the notion of Hurwitz moves.

We note that in the paper, we give only a brief explanation. However, in the literature, there are lots of references. 
Some of the references are \cite{Giroux-Pardon}, \cite[Section 2]{Courte}, \cite[Section 8]{Bourgeois-Ekholm-Eliashberg}, and \cite{Seidel-bluebook}. 
We also refer the reader to \cite[Section 3.1]{Casals-Murphy}, especially for the notion of stabilization of Lefschetz fibrations. 
  
In the last subsection of Section \ref{section preliminaries}, the definition of {\em end connected sum} is given. 

\subsection{Lefschetz fibration}
\label{subsect Lefschetz fibration}
Our main tool in the current paper is the notion of Lefschetz fibration, which is defined as follows:

\begin{definition}
	\label{def Lefschetz fibration}
	Let $(E,\omega = d \lambda)$ be a finite type Liouville manifold.
	See \cite{Cieliebak-Eliashberg} for the definition of Liouville manifold (of finite type). 
	A {\em Lefschetz fibration} on $E$ is a map $\pi : E \to \mathbb{C}$ satisfying the following properties:
	\begin{itemize}
		\item ({\em Lefschetz type critical points.})
		There are only finitely many points where $d\pi$ is not surjective, and for any such critical point $p$, there exist complex Darboux coordinates $(z_1, \cdots, z_n)$ centered at $p$ so that $\pi(z_1, \cdots, z_n) = \pi(p) + z_1^2 + \cdots + z_n^2$. 
		Moreover, there is at most one critical point in each fiber of $\pi$.
		\item ({\em Symplectic fiber.})
		Away from the critical points, $\omega$ is non-degenerate on the fibers of $\pi$.
		\item ({\em Triviality near the horizontal boundary.}) 
		There exists a contact manifold $(B,\xi)$, an open set $U \subset E$ such that $\pi : E \setminus U \to \mathbb{C}$ is proper and a codimension zero embedding $\Phi : U\to S_{\xi}B \times \mathbb{C}$ such that $pr_2 \circ \Phi = \pi$ and $\Phi_* \lambda = pr_1^* \lambda_\xi + pr_2^*\mu$ where $S_{\xi}B$ is the symplectization, $\mu = \frac{1}{2}r^2 d\theta$, and $(r, \theta)$ means the polar coordinates of $\mathbb{C}$.
		\item ({\em Transversality to the vertical boundary.})
		There exists $R >0$ such that the Liouville vector filed $X$ lifts the vector field $\frac{1}{2}r \partial_r$ near the region $\{|\pi| \geq R\}$.
	\end{itemize}
\end{definition}
We note that it would be more precise to use the term `Liouville Lefschetz fibration' in Definition \ref{def Lefschetz fibration} because there are Lefschetz fibrations of other types.
However, in this paper, this is the only type which we considered. 
Thus, we omit the adjective for convenience.

Let $\pi : E \to \mathbb{C}$ be a Lefschetz fibration defined on a Weinstein manifold $E$. 
Then, it is well-known that $\pi$ induces a decomposition of $E$ into two parts, one is a subcritical part, and the other is a collection of Weinstein critical handles. 
See \cite[Lemma 16.9]{Seidel-bluebook} or \cite[Section 8]{Bourgeois-Ekholm-Eliashberg}.
We note that the subcritical part is given as a product of the regular fiber of $\pi$ and $\mathbb{C}$.
In order to attaching critical handles to the subcritical part, one needs a collection of Legendrian attaching spheres which one can obtain from 
\begin{itemize}
	\item the cyclic order of the critical values of $\pi$, and
	\item the vanishing cycles corresponding to the critical values of $\pi$. 
\end{itemize}

Conversely, if one has such decomposition data of $E$, then, one can construct a Lefschetz fibration $\pi$ defined on $E$. 
We give a brief explanation after Definition \ref{def abstract Lefschetz fibration}. 

Based on the converse direction, one has an alternative definition for Definition \ref{def Lefschetz fibration}.

\begin{definition}
	\label{def abstract Lefschetz fibration}
	An {\em abstract (Weinstein) Lefschetz fibration} is a tuple 
	\[E = (F; L_1, \cdots, L_m)\]
	consisting of a Weinstein domain $F$ (the ``{\em central fiber}'') along with a finite sequence of exact parameterized Lagrangian spheres $L_1, \cdots, L_m \subset F$ (the ``{\em vanishing cycles}'').
\end{definition}

Definitions \ref{def Lefschetz fibration} and \ref{def abstract Lefschetz fibration} are interchangeable. 
In the rest of Section \ref{subsect Lefschetz fibration}, we explain the reason for that. 
For more details, we refer the reader to \cite{Seidel-bluebook} and \cite[Section 8]{Bourgeois-Ekholm-Eliashberg}. 

Let $E = (F; L_1, \cdots, L_m)$ be a given abstract Weinstein Lefschetz fibration.
Then, one can construct a Weinstein domain as follows:
First, we consider the product of $F$ and $\mathbb{D}^2$.
We remark that the product is not a Weinstein domain, because the product is a manifold with corner. 
To be more precise, we should consider a Weinstein manifold which is the product of symplectic completions of $F$ and $\mathbb{D}^2$. 
Then, we should consider a corresponding Weinstein domain, i.e., a Weinstein domain whose symplectic completion is the product Weinstein manifold. 
However, for convenience, we consider $F \times \mathbb{D}^2$. 

The vertical boundary $F \times \partial \mathbb{D}^2$ has a natural contact structure. 
Moreover, the vanishing cycle $L_i$ can be lifted to a Legendrian $\Lambda_i$ near $2 \pi i/m \in S^1$. 
We do not give the lifting procedure explicitly, but it is easily achieved by using the product structure on $F \times \mathbb{D}^2$.
We note that by assuming that the disk $\mathbb{D}^2$ has a sufficiently large radius, one could assume that the projection images of $\Lambda_i$ onto the $\partial \mathbb{D}^2 = S^1$ factor are disjoint to each other.
 
Finally, one could attach critical Weinstein handles along $\Lambda_i$ for all $i = 1, \cdots, m$.
Then, the completion of the resulting Weinstein domain admits a Lefschetz fibration satisfying that the regular fiber is $F$, and that there are exactly $m$ singular values.

\subsection{Equivalent abstract Lefschetz fibrations}
\label{subsection equivalent abstract Lefschetz fibrrations}
By \cite{Giroux-Pardon}, it is known that every Weinstein manifold admits a Lefschetz fibration. 
Also, it is known that, for a Weinstein manifold, there are infinitely many different Lefschetz fibrations. 
Then, it would be natural to ask that {\em if one has two different Lefschetz fibrations of the same total space, is there any relation between these two Lefschetz fibrations?} 
The above questions is partially answered as follows. 

Let 
\[(F; L_1, \cdots, L_m)\]
be a given abstract Lefschetz fibration. 
It is known that there are four moves producing another abstract Lefschetz fibration from $(F; L_1, \cdots, L_m)$ so that the total spaces of two abstract Lefschetz fibrations are Weinstein homotopic.
We list the four moves here. 

\begin{itemize}
	\item {\em Deformation} means a simultaneous Weinstein deformation of $F$ and exact Lagrangian isotopy of $(L_1, \cdots, L_m)$. 
	\item {\em Cyclic permutation} is to replace the ordered collection $(L_1, \cdots, L_m)$ with $(L_2, \cdots, L_m, L_1)$.
	In other words, 
	\[(F; L_1, \cdots, L_m) \simeq (F;L_2,\cdots, L_m, L_1).\] 
	The equivalence means that their total spaces are equivalent.
	\item {\em Hurwitz moves.} Let $\tau_i$ denote the symplectic Dehn twist around $L_i$. {\em Hurwitz move} is to replace $(L_1, \cdots, L_m)$ with either $(L_2, \tau_2(L_1), L_3, \cdots, L_n)$ or $(\tau_1^{-1}(L_2), L_1, L_3, \cdots, L_m)$, i.e.,
	\[(F; L_1, \cdots, L_m) \simeq (F;L_2, \tau_2(L_1), \cdots, L_m) \simeq (F; \tau_1^{-1}(L_2), L_1, \cdots, L_m).\]
	\item {\em Stabilization.} Let $\operatorname{dim}F = 2n-2$, or equivalently, the total space is of dimension $2n$. 
	For a parameterized Lagrangian disk $D^{n-1} \hookrightarrow F$ with Legendrian boundary $S^{n-2} = \partial D^{n-1} \hookrightarrow \partial F$ such that $0 = [\lambda] \in H^1(D^{n-1},\partial D^{n-1})$ where $\lambda$ is the Liouville $1$-form, replace $F$ with $\tilde{F}$, obtained by attaching a $(2n-2)$ dimensional Weinstein $(n-1)$-handle to $F$ along $\partial D^{n-1}$, and replace $(L_1, \cdots, L_m)$ with $(\tilde{L}, L_1, \cdots, L_m)$, where $\tilde{L} \subset \tilde{F}$ is obtained by gluing together $D^{n-1}$ and the core of the handle.
	In other words,
	\[(F;L_1, \cdots, L_m) \simeq (\tilde{F}; \tilde{L}, L_1, \cdots, L_m).\]
\end{itemize} 

\begin{remark}
	\label{rmk oepn quetsion}
	It is natural to ask that the above four moves are enough to connect any two Lefschetz fibrations of the same total space. 
	As far as we know, this question is still open. 
\end{remark}

\subsection{End connected sum}
\label{subsection end connected sum}
The goal of Section \ref{subsection end connected sum} is to define the notion of {\em end connected sum}.
Since the notion of end connected sum is defined as an attachment of index $1$ Weinstein handle, we start the current subsection by reviewing the notion of Weinstein handle attachment. 

In \cite{Weinstein}, Weinstein explained how to attach a Weinstein handle to a Weinstein manifold. 
A rough explanation on that is the following:
In order to a Weinstein handle $H$ to a Weinstein manifold $W$, one needs 
\begin{itemize}
	\item an {\em isotropic embedding of the attaching sphere} of $H$ onto the asymptotic boundary $\partial_\infty W$, and
	\item a {\em conformal symplectic normal bundle of $\Lambda$} where $\Lambda$ is the isotropic image of the above embedding. 
\end{itemize}
We note that a conformal symplectic normal bundle of $\Lambda$ means a conformal symplectic structure on the bundle $T\Lambda^{\perp'} / T\Lambda$ where $\perp'$ means the ``symplectic orthogonal operation'' in the tangent bundle of $W$. 
We refer the reader to \cite{Weinstein} for more details.

From the above arguments, one can induce Lemma \ref{lemma attaching 1 handle}.
\begin{lemma}
	\label{lemma attaching 1 handle}
	Let $W$ be a connected Weinstein manifold of dimension $2n \geq 4$. 
	Then, the attaching of Weinstein $1$ handle to $W$ is unique, up to Weinstein homotopy.
\end{lemma}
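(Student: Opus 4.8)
The plan is to reduce the attaching of a Weinstein $1$-handle to its combinatorial input and to show that this input is unique up to the relevant homotopies. By the discussion preceding the lemma, attaching a Weinstein $1$-handle to $W$ requires (i) an isotropic embedding of the attaching sphere $S^0$ into the ideal contact boundary $\partial_\infty W$, that is, an ordered pair of distinct points $(p_1,p_2)$, together with (ii) a choice of conformal symplectic normal framing over $\{p_1,p_2\}$. Since $S^0$ is $0$-dimensional, every embedding is automatically isotropic, so the content of the lemma is that any two such sets of data produce Weinstein-homotopic manifolds. I would therefore argue that both ingredients are unique up to homotopy and then invoke the principle, going back to Weinstein \cite{Weinstein} and formalized in Cieliebak--Eliashberg \cite{Cieliebak-Eliashberg}, that handle attachments along attaching data that are homotopic through admissible data yield Weinstein-homotopic results.

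The crucial point, and the one where the hypothesis $2n \geq 4$ enters, is that $\partial_\infty W$ is connected. I would prove this from a Weinstein handle decomposition of $W$: the manifold $W$ is built from a disjoint union of $2n$-balls (the index-$0$ handles) by attaching handles of index $1, \dots, n$. On the level of boundaries, attaching an index-$k$ handle performs a surgery along an embedded $S^{k-1}$, replacing a neighbourhood $S^{k-1}\times D^{2n-k}$ by $D^{k}\times S^{2n-k-1}$. For $k \geq 2$ the cosphere $S^{2n-k-1}$ has dimension $2n-k-1 \geq n-1 \geq 1$, so such a surgery never disconnects the boundary; hence only the index-$0$ and index-$1$ handles affect the number of boundary components. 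Each index-$0$ handle contributes one boundary sphere, and each index-$1$ handle merges the components containing its two feet. Thus the number of boundary components of $W$ equals the number of connected components of the graph whose vertices are the index-$0$ handles and whose edges are the index-$1$ handles — which is exactly the graph computing $\pi_0(W)$, since handles of index $\geq 2$ do not change $\pi_0$. As $W$ is connected this graph is connected, so $\partial_\infty W$ is connected. (For $n=1$ the cosphere can be $S^0$ and this argument breaks down, consistent with the failure of the statement for, e.g., $T^*S^1$.)

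With connectivity in hand I would finish as follows. Because $\partial_\infty W$ is a connected manifold of dimension $2n-1 \geq 3$, its ordered configuration space of two points $\mathrm{Conf}_2(\partial_\infty W)=\partial_\infty W\times\partial_\infty W\setminus\Delta$ is path-connected (the diagonal has codimension $2n-1\geq 2$); any path between two choices of $(p_1,p_2)$ is realized by an ambient contact isotopy of $\partial_\infty W$, which in turn extends to a Weinstein homotopy identifying the corresponding handle attachments. For the framing in (ii), the relevant space of conformal symplectic normal framings over a single point is homotopy equivalent to the linear symplectic group $Sp(2n-2,\mathbb{R})$, which is connected; hence over the $0$-sphere $\{p_1,p_2\}$ the framing is unique up to homotopy and contributes no further choices. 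Combining these two facts, all admissible attaching data are homotopic, and the resulting Weinstein manifolds are pairwise Weinstein homotopic.

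The main obstacle is precisely the connectivity of $\partial_\infty W$: this is what can fail in dimension $2n=2$ and is what forces the hypothesis $2n\geq 4$. Once connectivity is established, the remaining steps reduce to the connectedness of a configuration space and of the symplectic group, together with the standard fact that homotopic isotropic attaching data yield Weinstein-homotopic handle attachments.
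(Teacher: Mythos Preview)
Your proof is correct and follows essentially the same strategy as the paper: reduce to showing that $\partial_\infty W$ is connected (using that Weinstein handles have index $\leq n < 2n-1$), and then observe that the remaining data---the pair of points and the conformal symplectic normal framing---are unique up to homotopy. Your treatment is in fact more detailed than the paper's: you spell out the surgery argument for connectivity of the boundary and phrase the framing step via connectedness of $Sp(2n-2,\mathbb{R})$, whereas the paper simply asserts that the absence of index-$(2n-1)$ handles forces $\partial_\infty W$ connected and that $d\alpha$ gives a canonical symplectic structure on $\xi_{p_i}$.
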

\begin{proof}
	Let $H$ be an index $1$ Weinstein handle of dimension $2n$.
	Then, the attaching sphere of $H$ is homeomorphic to $S^0$, i.e., two points. 
	Thus, the embedding of the attaching sphere is to choose two points from the asymptotic boundary of $W$. 
	This implies that any two isotropic embedding of the attaching sphere are isotopic if $\partial_\infty W$ is connected. 
	
	Since $W$ is a Weinstein manifold, $W$ admits a Weinstein handle decomposition. 
	Thus, $W$ is given as a union of $2n$ dimensional handles whose indices are $0, \cdots, n$. 
	This implies that $W$ does not contain an index $2n-1$ handle, $\partial_\infty W$ should be connected.  
	
	To be more precise, let $\xi$ be the contact structure given on $\partial_\infty W$. 
	Also, let $\alpha$ be the contact one form on $\partial_\infty W$ induced from the Weinstein structure of $W$. 
	If the embedding of the attaching sphere is $\{p_1, p_2\}$, then a conformal symplectic bundle on the attaching sphere would be a choice of symplectic structure on $T^{\perp'}p_i / Tp_i$. 
	
	Since $p_i$ is a point, $Tp_i$ is the zero vector space. 
	Thus, $T^{\perp'}p_i / Tp_i = T^{\perp'}p_i = \xi_{p_i}$. 
	Then, $d \alpha$ defines a unique symplectic structure on $T^{\perp'}p_i / Tp_i$, it completes the proof.  
\end{proof}

Let $W_1$ and $W_2$ be connected Weinstein manifolds of the same dimension $2n \geq 4$. 
Then, it is easy to check that there is a unique way, up to Weinstein homotopic, to construct a connected Weinstein manifold by attaching a Weinstein handle of index $1$ to $W := W_1 \cup W_2$, where $\cup$ means the disjoint union. 

\begin{definition}
	\label{def end connected sum}
	Let $W_1$ and $W_2$ be the connected Weinstein manifolds of dimension $2n \geq 4$.
	Then, the {\em end connected sum} of $W_1$ and $W_2$ is the connected Weinstein manifold obtained by attaching an index $1$ Weinstein handle to $W := W_1 \cup W_2$.  
	Let $W_1 \#_e W_2$ denote the end connected sum of $W_1$ and $W_2$. 
\end{definition} 

\section{Construction of an exotic pair ($\X, \Y$)}
\label{section construction}

\subsection{Construction}
\label{subsection construction}
As mentioned in Section \ref{section preliminaries}, from a Weinstein manifold $F$ and a cyclically ordered finite collection of exact Lagrangian spheres in $F$, one could construct another Weinstein manifold equipped with a Lefschetz fibration $\pi$ such that 
\begin{itemize}
	\item the regular fiber of $\pi$ is $F$, and
	\item the number of critical values of $\pi$ is the same as the number of exact Lagrangians in the cyclically ordered collection.
\end{itemize}
We will construct $\X$ and $\Y$ for $n \geq 2, k \geq 1$ by using the above method.  

First, we define a notation. 
\begin{definition}
	\label{def A plumbig}
	Let $A_k^{2n}$ be a $A_k$ type plumbing of $T^*S^n$, i.e., plumbings of $k$ copies of $T^*S^n$, whose plumbing graph is the Dynkin diagram of $A_k$ type. 
\end{definition}

It is well-known that $A_k^{2n}$ admits a Lefschetz fibration $\rho$ such that 
\begin{itemize}
	\item the regular fiber of $\rho$ is $T^*S^{n-1}$, 
	\item $\rho$ has $k+1$ singular values, and
	\item the corresponding vanishing cycles for singular values are the zero section of the regular fiber $T^*S^{n-1}$. 
\end{itemize} 
The existence, and the properties of $\rho$ are well-known in the literature.
We refer the reader to \cite{Khovanov-Seidel} and \cite{Wu} for more details.
 
\begin{remark}
	\label{rmk existence of rho}
	We note that if $n\geq 2$, then $\rho$ exists. 
	This is the reason why we consider the case of $n \geq 2$.  
\end{remark}

Let $F = A_2^{2n}$.
Since $F$ is a plumbing of two $T^*S^n$, one can find two exact Lagrangian spheres which are the zero sections of two $T^*S^n$.
Let those Lagrangian spheres be denoted by $\alpha$ and $\beta$. 
Then, $\rho(\alpha)$ and $\rho(\beta)$ are given by curves connecting two singular values of $\rho$.
Figure \ref{figure base_of_rho} is the picture of the base of $\rho$. 

\begin{figure}[h]
	\centering
\begingroup%
  \makeatletter%
  \providecommand\color[2][]{%
    \errmessage{(Inkscape) Color is used for the text in Inkscape, but the package 'color.sty' is not loaded}%
    \renewcommand\color[2][]{}%
  }%
  \providecommand\transparent[1]{%
    \errmessage{(Inkscape) Transparency is used (non-zero) for the text in Inkscape, but the package 'transparent.sty' is not loaded}%
    \renewcommand\transparent[1]{}%
  }%
  \providecommand\rotatebox[2]{#2}%
  \newcommand*\fsize{\dimexpr\f@size pt\relax}%
  \newcommand*\lineheight[1]{\fontsize{\fsize}{#1\fsize}\selectfont}%
  \ifx\svgwidth\undefined%
    \setlength{\unitlength}{170.07874016bp}%
    \ifx\svgscale\undefined%
      \relax%
    \else%
      \setlength{\unitlength}{\unitlength * \real{\svgscale}}%
    \fi%
  \else%
    \setlength{\unitlength}{\svgwidth}%
  \fi%
  \global\let\svgwidth\undefined%
  \global\let\svgscale\undefined%
  \makeatother%
  \begin{picture}(1,1)%
    \lineheight{1}%
    \setlength\tabcolsep{0pt}%
    \put(0,0){\includegraphics[width=\unitlength,page=1]{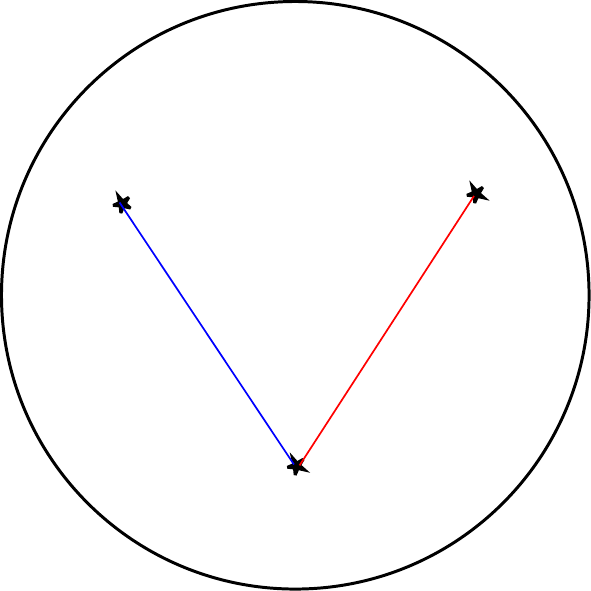}}%
    \put(0.22377545,0.40418875){\makebox(0,0)[lt]{\lineheight{1.25}\smash{\begin{tabular}[t]{l}$\rho(\alpha)$\end{tabular}}}}%
    \put(0.68282248,0.40419123){\makebox(0,0)[lt]{\lineheight{1.25}\smash{\begin{tabular}[t]{l}$\rho(\beta)$\end{tabular}}}}%
  \end{picture}%
\endgroup%
		
	\caption{Three star marked points are critical values of $\rho$, the blue (resp.\ red) curve is the image of $\alpha$ (resp.\ $\beta$) under $\rho$.}
	\label{figure base_of_rho}
\end{figure}

Since $\alpha$ and $\beta$ are Lagrangian spheres in $F$, there are generalized Dehn twists $\tau_\alpha$ and $\tau_\beta$ along $\alpha$ and $\beta$.

With above arguments, one can define $\X$ and $\Y$ as follows:
\begin{definition}
	\label{def X and Y}
	\mbox{}
	\begin{enumerate}
		\item Let $\X$ be a Weinstein manifold which is the total space of the abstract Lefschetz fibration 
		\begin{align}
			\label{eqn X}
			\left(F = A^{2n}_2; \alpha, \cdots, \alpha, (\tau_\alpha)^{2k}(\beta), \beta \right).
		\end{align}
		The total number of $\alpha$ in Equation \eqref{eqn X} is $(2k+1)$, and the total number of exact Lagrangians in the cyclically ordered collection is $(2k+3)$.
		\item Let $\Y$ be a Weinstein manifold which is the total space of the abstract Lefschetz fibration 
		\begin{align}
			\label{eqn Y}
			\left(F = A^{2n}_2; \alpha, \cdots, \alpha, \beta, \beta \right).
		\end{align}
		The total number of $\alpha$ in Equation \eqref{eqn Y} is $(2k+1)$, and the total number of exact Lagrangians in the cyclically ordered collection is $(2k+3)$.
	\end{enumerate}
\end{definition}

\subsection{Wrapped Fukaya category of $\X$}
\label{subsection wrapped Fukaya of X}
In Section \ref{subsection construction}, we constructed $\X$ and $\Y$ as total spaces of abstract Lefschetz fibrations. 
Before going further, we investigate the wrapped Fukaya categories of $\X$ and $\Y$ in Sections \ref{subsection wrapped Fukaya of X} and \ref{subsection wrapped Fukaya of Y}.
By studying those wrapped Fukaya categories, we can have a hint for proving that $\X$ and $\Y$ are not equivalent as Weinstein manifolds. 

We start with Lemma \ref{lemma strucutre of X}.
\begin{lemma}
	\label{lemma strucutre of X}
	The Weinstein manifold $\X$ defined in Definition \ref{def X and Y} is Weinstein homotopic to $A^{2n+2}_{2k+1}$.
\end{lemma}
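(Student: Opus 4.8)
The plan is to transform the defining abstract Lefschetz fibration of $\X$ into a standard presentation of $A^{2n+2}_{2k+1}$ using only the moves of Section \ref{subsection equivalent abstract Lefschetz fibrrations}, all of which preserve the Weinstein homotopy type of the total space. First I would record two standard presentations. By the well-known Lefschetz fibration of the $A_m$ Milnor fiber (the existence referenced after Definition \ref{def A plumbig} via \cite{Khovanov-Seidel}, \cite{Wu}, applied one dimension up), the manifold $A^{2n+2}_{2k+1}$ is the total space of $(T^*S^n; S, \ldots, S)$ with $2k+2$ copies of the zero section $S$. Next I would observe that $A_2^{2n}$ is itself a stabilization of $T^*S^n$: attaching a Weinstein $n$-handle to $T^*S^n$ along the Legendrian boundary of a cotangent-fiber disk $D^n$ produces exactly the $A_2$-plumbing, with the new stabilizing sphere $\tilde L$ being the second zero section $\beta$ and the original zero section being $\alpha$. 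Applying the stabilization move once to the standard presentation therefore gives
\[
\left(A_2^{2n}; \beta, \underbrace{\alpha, \ldots, \alpha}_{2k+2}\right) \simeq A^{2n+2}_{2k+1}.
\]
Thus it suffices to show that the defining presentation of $\X$ is Hurwitz/cyclically equivalent to this once-stabilized standard form.

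The first stage is an explicit unwinding of the twisted vanishing cycle. Applying the second Hurwitz move to an adjacent pair $\bigl(\alpha, (\tau_\alpha)^{m}(\beta)\bigr)$ replaces it with $\bigl((\tau_\alpha)^{m-1}(\beta), \alpha\bigr)$, lowering the power of the Dehn twist by one while sliding an $\alpha$ to its right. Iterating this $2k$ times, consuming $2k$ of the $2k+1$ leading copies of $\alpha$, I obtain
\[
\left(A_2^{2n}; \alpha, \ldots, \alpha, (\tau_\alpha)^{2k}(\beta), \beta\right) \simeq \left(A_2^{2n}; \alpha, \beta, \underbrace{\alpha, \ldots, \alpha}_{2k}, \beta\right).
\]
This stage is routine and is essentially forced by the definition of $\X$.

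The second stage is the crux: I must convert the right-hand collection above, which still carries two $\beta$-flavored cycles, into the single-$\beta$ form $\bigl(\beta, \alpha, \ldots, \alpha\bigr)$. Here I would use the braid relation $\tau_\alpha \tau_\beta \tau_\alpha = \tau_\beta \tau_\alpha \tau_\beta$, valid because $\alpha$ and $\beta$ meet transversally in a single point inside $A_2^{2n}$, together with cyclic permutations, to merge the two $\beta$'s and liberate an extra copy of $\alpha$. As a guide and consistency check I would first verify that the two candidate collections have conjugate total monodromies: the defining one has monodromy $\tau_\beta (\tau_\alpha)^{2k} \tau_\beta \tau_\alpha$, while the target has $(\tau_\alpha)^{2k+2}\tau_\beta$, and the braid relation identifies these up to conjugacy (equivalently, they are conjugate in the transvection representation on the middle homology of the fiber). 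Once the equivalence of collections is established, applying the (inverse of the) stabilization move deletes the distinguished $\beta = \tilde L$ and returns the fiber to $T^*S^n$, leaving $(T^*S^n; S, \ldots, S)$ with $2k+2$ copies, which is the standard presentation of $A^{2n+2}_{2k+1}$; invariance of the total space under Deformation, Cyclic permutation, Hurwitz moves, and Stabilization then gives the claim.

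I expect the main obstacle to be the second stage. The matching of total monodromies is only a necessary condition, so the real work is producing an honest sequence of Hurwitz and cyclic moves realizing the merge, and—since the central fiber $A_2^{2n}$ is high-dimensional rather than a surface—verifying that the intermediate cycles are the intended \emph{parameterized} exact Lagrangian spheres up to Lagrangian isotopy, not merely that their homology classes agree. I would handle this by tracking each cycle as an explicit iterated Dehn twist of $\alpha$ or $\beta$ and appealing to the standard identification of these twisted spheres in the $A_2$ configuration, reducing the bookkeeping to the braid relation above.
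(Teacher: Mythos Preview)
Your outline is the same strategy as the paper's, and your first stage is exactly right; the only issue is that you stop at the point you yourself flag as the crux, while the paper disposes of that step in three moves and one elementary identity.

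Concretely, from your intermediate form $\bigl(A_2^{2n};\alpha,\beta,\alpha,\ldots,\alpha,\beta\bigr)$ simply apply one more Hurwitz move of the same kind (so you have done $2k+1$ in total, through all the leading $\alpha$'s) to reach
\[
\bigl(A_2^{2n};\tau_\alpha^{-1}(\beta),\alpha,\ldots,\alpha,\beta\bigr),
\]
cyclically permute the first entry to the end to obtain $\bigl(A_2^{2n};\alpha,\ldots,\alpha,\beta,\tau_\alpha^{-1}(\beta)\bigr)$, and then Hurwitz-move $\tau_\alpha^{-1}(\beta)$ left past $\beta$ once:
\[
\bigl(A_2^{2n};\alpha,\ldots,\alpha,(\tau_\beta^{-1}\circ\tau_\alpha^{-1})(\beta),\beta\bigr).
\]
The single Lagrangian-isotopy fact needed is $(\tau_\beta^{-1}\circ\tau_\alpha^{-1})(\beta)\simeq\alpha$ in $A_2^{2n}$, which you can read off immediately from the matching-path description under the fibration $\rho$ of Section~\ref{subsection construction} (it is exactly the braid relation you mention, in the form $\tau_\alpha\tau_\beta(\alpha)\simeq\beta$). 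This gives $\bigl(A_2^{2n};\alpha,\ldots,\alpha,\beta\bigr)$ with $2k+2$ copies of $\alpha$, which after a cyclic permutation is your once-stabilized standard form, and destabilizing yields $(T^*S^n;S,\ldots,S)$ with $2k+2$ copies.

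So there is no need for the monodromy-matching heuristic or for worrying about identifying intermediate parameterized spheres: the only sphere that is not literally $\alpha$ or $\beta$ at any stage is $(\tau_\beta^{-1}\circ\tau_\alpha^{-1})(\beta)$, and that one is $\alpha$ on the nose via $\rho$.
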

\begin{proof}
	We prove Lemma \ref{lemma strucutre of X} by operating a sequence of moves given in Section \ref{subsection equivalent abstract Lefschetz fibrrations}.
	
	First, we can operate Hurwitz moves $(2k+1)$ times, which are moving $(\tau_\alpha)^{2k}(\beta)$ to left in Equation \eqref{eqn X}. 
	Since, on the left side of $(\tau_\alpha)^{2k}(\beta)$, there are $(2k+1)$ many $\alpha$, we obtain the following:
	\[\X \simeq \left(A^{2n}_2; \alpha, \cdots, \alpha, (\tau_\alpha)^{2k}(\beta), \beta \right) \simeq \left(A^{2n}_2; \tau_\alpha^{-1}(\beta), \alpha, \alpha, \cdots, \alpha, \beta \right).\]
	
	Second, we can move the first $\tau_{\alpha}^{-1}(\beta)$ to the right end of the collection of exact Lagrangians.
	This is because the collection is cyclically ordered. 
	Then, we have 
	\[\X \simeq \left(A^{2n}_2; \alpha, \cdots, \alpha, \beta, \tau_\alpha^{-1}(\beta)  \right).\]
	
	After that, we move $\tau_\alpha^{-1}(\beta)$ to the left once.
	It concludes that 
	\[\X \simeq \left(A^{2n}_2; \alpha, \cdots, \alpha, (\tau_\beta^{-1} \circ \tau_\alpha^{-1})(\beta), \beta \right).\]
	
	By using the property of $\rho : A^{2n}_{2} \to \mathbb{C}$, one can easily check that 
	\[(\tau_\beta^{-1} \circ \tau_\alpha^{-1})(\beta) \simeq \alpha.\]
	Thus, 
	\[\X \simeq \left(A^{2n}_2; \alpha, \cdots, \alpha, \alpha, \beta \right).\]
	We note that the number of $\alpha$ in the above abstract Lefschetz fibration is $(2k+2)$. 
	
	From the definition of the operation `stabilization', it is easy to show that the right side of the above equation is obtained by stabilizing the following abstract Lefschetz fibration  
	\[\left(A^{2n}_1 \simeq T^*S^n; \alpha, \cdots, \alpha \right),\] 
	which is the well-known abstract Lefschetz fibration of $A^{2n+2}_{2k+1}$. 
	This proves that $\X \simeq A^{2n+2}_{2k+1}$. 
\end{proof}

Lemma \ref{lemma wrapped Fukaya of X} follows naturally. 
\begin{lemma}
	\label{lemma wrapped Fukaya of X}
	We have 
	\[\cW(\X) \simeq \cW(A^{2n+2}_{2k+1}),\]
	where $\cW(W)$ means the wrapped Fukaya category of $W$.
\end{lemma}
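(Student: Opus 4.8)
The plan is to deduce Lemma \ref{lemma wrapped Fukaya of X} as an immediate formal consequence of Lemma \ref{lemma strucutre of X}, using the fact that the wrapped Fukaya category is an invariant of the Weinstein homotopy type. Concretely, Lemma \ref{lemma strucutre of X} asserts that $\X$ is Weinstein homotopic to $A^{2n+2}_{2k+1}$, and the wrapped Fukaya category $\cW(-)$ is known to be invariant under Weinstein homotopy; hence $\cW(\X) \simeq \cW(A^{2n+2}_{2k+1})$.

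First I would recall the invariance statement that underlies the argument: a Weinstein homotopy induces a quasi-equivalence of wrapped Fukaya categories. This is the relevant functoriality property of $\cW$ for Weinstein manifolds, so I would cite the appropriate reference for the invariance of the wrapped Fukaya category under deformations of the Weinstein structure (the same body of literature, e.g.\ \cite{Giroux-Pardon} and \cite{Seidel-bluebook}, that the excerpt relies on for Lefschetz-fibration technology). Second, I would simply apply this invariance to the Weinstein homotopy produced in Lemma \ref{lemma strucutre of X}. Since all four moves listed in Section \ref{subsection equivalent abstract Lefschetz fibrrations} yield total spaces that are Weinstein homotopic, and Lemma \ref{lemma strucutre of X} realizes the identification $\X \simeq A^{2n+2}_{2k+1}$ through exactly such a sequence of moves, the induced functor on wrapped Fukaya categories is a quasi-equivalence.

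There is essentially no computational obstacle here, because the real content has already been discharged in Lemma \ref{lemma strucutre of X}; this lemma is a corollary in all but name. The only point that requires any care is to make sure the notion of equivalence on each side matches: the Weinstein homotopy invariance should be invoked at the level of $A_\infty$ (or quasi-)equivalence, so that ``$\simeq$'' in the statement is read as a quasi-equivalence of $A_\infty$-categories rather than a strict isomorphism. Thus the proof I would write is a one-line deduction: by Lemma \ref{lemma strucutre of X}, $\X$ is Weinstein homotopic to $A^{2n+2}_{2k+1}$, and since $\cW$ is invariant under Weinstein homotopy, we obtain $\cW(\X) \simeq \cW(A^{2n+2}_{2k+1})$.

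The step I expect to be the only genuine subtlety is the precise citation and formulation of the Weinstein-homotopy invariance of $\cW$, rather than anything in the body of the argument. In particular, one must ensure that the version of invariance invoked applies to the finite-type Liouville/Weinstein setting used throughout the paper and covers all four moves (deformation, cyclic permutation, Hurwitz moves, and stabilization); invariance under these moves is exactly what makes the abstract Lefschetz fibration formalism compatible with $\cW$. Once that invariance is in hand, Lemma \ref{lemma wrapped Fukaya of X} follows with no further work.
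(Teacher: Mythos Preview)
Your proposal is correct and matches the paper's approach exactly: the paper simply states that Lemma \ref{lemma wrapped Fukaya of X} ``follows naturally'' from Lemma \ref{lemma strucutre of X} and gives no further argument. Your one-line deduction via the Weinstein-homotopy invariance of $\cW$ is precisely what is intended.
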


\begin{remark}
	For the definition of wrapped Fukaya categories we used in the present paper, see \cite{Ganatra-Pardon-Shende1}.
	Roughly, wrapped Fukaya categories can be defined as $A_\infty$-categories whose objects are exact Lagrangians with cylindrical ends.
	In order to be precise, we note that the notion of equivalence between wrapped Fukaya categories in this paper is the pretriangulated equivalence, i.e., quasi-equivalence after taking pretriangulated closures. 
	In the rest of the paper, we simply say `equivalence' rather than `pretriangulated equivalence'.
\end{remark}

\subsection{Wrapped Fukaya category of $\Y$}
\label{subsection wrapped Fukaya of Y}
In the next subsection, we prove the following Lemma.
\begin{lemma}
	\label{lemma wrapped Fukaya of Y}
	The wrapped Fukaya category of $\Y$ is equivalent to the coproduct of the wrapped Fukaya categories of $A^{2n+2}_{2k}$ and $T^*S^{n+1}$, i.e.,
	\[\cW(\Y) \simeq \cW(A^{2n+2}_{2k}) \amalg \cW(T^*S^{n+1}).\]
\end{lemma}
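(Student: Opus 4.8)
The plan is to prove the decomposition by two complementary inputs: a geometric statement that $\Y$ is Weinstein homotopic to the end connected sum $A^{2n+2}_{2k} \#_e T^*S^{n+1}$, and a general categorical principle that an end connected sum produces a coproduct of wrapped Fukaya categories, $\cW(W_1 \#_e W_2) \simeq \cW(W_1) \amalg \cW(W_2)$. Combining these immediately gives $\cW(\Y) \simeq \cW(A^{2n+2}_{2k}) \amalg \cW(T^*S^{n+1})$. By the generation results for Lefschetz fibrations \cite{Ganatra-Pardon-Shende1}, $\cW(\Y)$ is generated by the Lefschetz thimbles (cocores): the $(2k+1)$ thimbles $D^{(1)}_\alpha, \dots, D^{(2k+1)}_\alpha$ attached along the copies of $\alpha$, together with the two thimbles $D^{(1)}_\beta, D^{(2)}_\beta$ attached along the two copies of $\beta$. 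The target coproduct then amounts to saying that these generators split into the $\alpha$-family, generating a copy of $\cW(A^{2n+2}_{2k})$, and the $\beta$-family, generating a copy of $\cW(T^*S^{n+1})$, with no morphisms between the two families.

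For the geometric decomposition I would argue as in the proof of Lemma \ref{lemma strucutre of X}, manipulating the abstract Lefschetz fibration $\left(A^{2n}_2; \alpha, \dots, \alpha, \beta, \beta\right)$ by the moves of Section \ref{subsection equivalent abstract Lefschetz fibrrations}. The two consecutive copies of $\beta$ assemble into an embedded Lagrangian matching sphere $S^{n+1} \subset \Y$ whose cotangent neighbourhood is the $T^*S^{n+1}$ summand; indeed $\left(A^{2n}_1 \simeq T^*S^n; \beta, \beta\right)$ is precisely the standard Lefschetz fibration of $A^{2n+2}_1 = T^*S^{n+1}$. Symmetrically, the $(2k+1)$ copies of $\alpha$ carry the standard Lefschetz fibration $\left(A^{2n}_1 \simeq T^*S^n; \alpha, \dots, \alpha\right)$ of $A^{2n+2}_{2k}$. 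Passing from the shared central fiber $A^{2n}_2$ down to $T^*S^n$ in each factor is a destabilization, using that $A^{2n}_2$ is a stabilization of $T^*S^n$ whose stabilizing cycle is $\beta$ (resp.\ $\alpha$) together with the relations among $\alpha$, $\beta$, $\tau_\alpha$, $\tau_\beta$ recorded in the proof of Lemma \ref{lemma strucutre of X}; this is also the decomposition underlying the technical Theorem \ref{thm main}. The delicate feature here, and the source of the whole phenomenon, is that in the fiber $A^{2n}_2$ the spheres $\alpha$ and $\beta$ are \emph{plumbed} and hence intersect, yet in the total space the two families separate into an end connected sum.

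The main obstacle is precisely reconciling this fiber intersection with the claimed orthogonality. Since $\alpha$ and $\beta$ meet transversely once in $F = A^{2n}_2$, the fiberwise Floer complex $HF^*_F(\alpha,\beta)$ is nonzero, so the off-diagonal morphism spaces between $\alpha$- and $\beta$-thimbles do not vanish at the naive fiber level; the cross terms must instead become acyclic in $\cW(\Y)$ once the wrapping in the total space is taken into account. I would establish this through the general end connected sum principle: the index $1$ handle realizing $\#_e$ is subcritical, so it contributes no new cocore generator, and Reeb chords running through the $1$-handle neck cannot produce surviving wrapped morphisms between Lagrangians supported in the two different summands, whence $\Hom_{\cW(\Y)}\!\left(D^{(i)}_\alpha, D^{(j)}_\beta\right)$ is acyclic for all $i,j$. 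An alternative, more computational route is to write down the $A_\infty$-endomorphism algebra of $\bigoplus_i D^{(i)}_\alpha \oplus \bigoplus_j D^{(j)}_\beta$ and check directly that the wrapped differential cancels the fiber intersection generators in the off-diagonal blocks. Either way, verifying the vanishing of these cross terms against the nonzero fiber intersection is the crux; granting it, orthogonal generation together with the identifications of the two factors yields the coproduct $\cW(\Y) \simeq \cW(A^{2n+2}_{2k}) \amalg \cW(T^*S^{n+1})$.
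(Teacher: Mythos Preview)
Your overall strategy --- identify $\Y$ as the end connected sum $A^{2n+2}_{2k} \#_e T^*S^{n+1}$ and then invoke the general principle $\cW(W_1 \#_e W_2) \simeq \cW(W_1) \amalg \cW(W_2)$ --- is correct in spirit and is in fact the content of the paper's argument, but read in the reverse direction: the paper first proves the categorical coproduct via a sectorial covering and only afterwards (Remark \ref{rmk facts}) observes that this exhibits $\Y$ as an end connected sum. The gap in your write-up is that you never actually establish the geometric input, and the argument you sketch for it does not work.

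Specifically, the moves of Section \ref{subsection equivalent abstract Lefschetz fibrrations} (deformation, cyclic permutation, Hurwitz, stabilization) all transform one abstract Lefschetz fibration into another with the \emph{same connected total space}; they cannot split $\left(A^{2n}_2;\alpha,\dots,\alpha,\beta,\beta\right)$ into two separate fibrations. Your sentence ``passing from the shared central fiber $A^{2n}_2$ down to $T^*S^n$ in each factor is a destabilization'' is where the argument breaks: a destabilization removes a handle from the fiber \emph{together with} the corresponding vanishing cycle, so destabilizing along $\beta$ would delete one $\beta$ from the tuple and leave the remaining $\beta$ living in a fiber that no longer contains it. There is no move in that list producing two disjoint fibrations joined by a $1$-handle. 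Consequently your appeal to the ``end connected sum principle'' to kill the cross morphisms is circular: you are invoking the end connected sum structure to prove the orthogonality, but the end connected sum structure is exactly what remains unproven.

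The paper fills this gap by a direct geometric construction rather than by fibration moves. One decomposes the base disk $\mathbb{D}^2$ into three pieces $S_1,T,S_2$ and the fiber $A^{2n}_2$ into $h_0 \cup h_\alpha \cup h_\beta$, then performs a Weinstein homotopy so that the $n$-handle $H_\alpha$ (resp.\ $H_\beta$) is attached entirely over $S_1$ (resp.\ $S_2$). This yields an honest Weinstein sectorial covering $\{\overline{Y}_1,\overline{Y}_2\}$ with $\overline{Y}_1 \cap \overline{Y}_2 = h_0 \times T \simeq \mathbb{D}^{2n} \times T^*[0,1]$, whose wrapped Fukaya category vanishes; the homotopy colimit formula of \cite{Ganatra-Pardon-Shende2} then gives the coproduct. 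The point you correctly flagged as ``the crux'' --- that $\alpha$ and $\beta$ intersect in the fiber yet the thimble families become orthogonal in the total space --- is resolved precisely by this skeleton modification (Figure \ref{figure skeleton}), not by any algebraic cancellation argument.
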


Because of the length of the proof, we give a sketch of the proof in the present subsection. 

In order to prove Lemma \ref{lemma wrapped Fukaya of Y}, we construct a Weinstein sectorial covering $\{\overline{Y}_1, \overline{Y}_2\}$ of a Weinstein domain which is Weinstein homotopic to $\Y$.
In the construction of the Weinstein sectorial covering, we need to use the notions of Weinstein homotopy, symplectic completion, Lagrangian skeleta, etc. 
The construction would take the most part of Section \ref{subsection wrapped Fukaya of Y}. 

The constructed Weinstein sectorial covering $\{\overline{Y}_1, \overline{Y}_2\}$ satisfies that $\overline{Y}_1 \simeq A^{2n+2}_{2k}$ and $\overline{Y}_2 \simeq T^*S^{n+1}$.
The notion of equivalences here is Weinstein homotopic. 
Also, after a proper modification of the Liouville structures, the intersection $\overline{Y}_1 \cap \overline{Y}_2$ is equivalent to $T^*[0,1] \times \mathbb{D}^{2n}$ whose wrapped Fukaya category vanishes. 

By the result of \cite{Ganatra-Pardon-Shende2}, we conclude that 
\[\cW(\Y) \simeq \hocolim \left( \begin{tikzcd}[ampersand replacement=\&]
	\cW(\overline{Y}_1) \& \& \cW(\overline{Y}_2)  \\
	\& \cW(\overline{Y}_1 \cap \overline{Y}_2) \arrow{lu} \arrow{ru} 
\end{tikzcd}\right).\]
Then, the above equation proves Lemma \ref{lemma wrapped Fukaya of Y}.

\subsection{Proof of Lemma \ref{lemma wrapped Fukaya of Y}}
\label{subsection proof of lemma}
We note that in the current subsection, we modify Weinstein structures via Weinstein homotopies. 
For the notion of Weinstein homotopies, see \cite{Cieliebak-Eliashberg}.
Also, we refer the reader to \cite{Starkston}.

Before studying $\cW(\Y)$, we recall the construction of $\Y$. 
By Equation \eqref{eqn Y}, $\Y$ is obtained by attaching $(2k+3)$ Weinstein critical handles to the subcritical part. 
Let $H_1, \cdots, H_{2k+3}$ denote the critical handles which are attached to the subcritical part of $\Y$ such that 
\begin{itemize}
	\item $H_1, \cdots, H_{2k+1}$ are attached along the Legendrians induced from $\alpha$, and
	\item $H_{2k+2}$ and $H_{2k+3}$ are attached along the Legendrians induced from $\beta$.	
\end{itemize}

We discuss a decomposition of the subcritical part of $\Y$. 
Since the subcritical part is given as a product of the fiber $A_2^{2n}$ and a disk $\mathbb{D}^2$, we will decompose $A_2^{2n}$ and $\mathbb{D}^2$.

\begin{remark}
	\label{rmk omitting up to completion}
	To be more precise, we should say that the subcritical part of $\Y$ is a Weinstein domain whose symplectic completion is $A^{2n}_2 \times \mathbb{C}$. 
	Then, $\Y$ is the symplectic completion of a Weinstein domain which is obtained by attaching $H_1, \cdots, H_{2k+3}$ to the subcritical part. 
	However, for convenience, we omit the words `up to symplectic completion' in the current paper. 
\end{remark}

In order to decompose $A_2^{2n}$, we recall that $A_2^{2n}$ is a union of $T^*\alpha$ and $T^*\beta$, as mentioned in Section \ref{subsection construction}. 
This induces that $A_2^{2n}$ admits a Weinstein handle decomposition which consists of one $2n$-dimensional index $0$-handle and two $2n$-dimensional index $n$-handles. 
Moreover, the union of the unique $0$-handle and one $n$-handle is $T^*\alpha$, and the union of the $0$-handle and the other $n$-handle is $T^*\beta$.
We use $h_0, h_\alpha, h_\beta$ to denote the index $0$-handle, the index $n$-handle making $T^*\alpha$, the index $n$-handle making $T^*\beta$ of the fiber $A_2^{2n}$, respectively. 

For a decomposition of $\mathbb{D}^2$, we consider a Weinstein sectorial covering of $\mathbb{D}^2$.
In order to do this, we start from a Weinstein handle decomposition of $\mathbb{D}^2$, consisting of three index $0$-handles and two index $1$-handles. 
Figure \ref{figure handle_decomposition_of_D2}, $a)$ describes the handle decomposition. 

\begin{figure}[h]
	\centering
\begingroup%
  \makeatletter%
  \providecommand\color[2][]{%
    \errmessage{(Inkscape) Color is used for the text in Inkscape, but the package 'color.sty' is not loaded}%
    \renewcommand\color[2][]{}%
  }%
  \providecommand\transparent[1]{%
    \errmessage{(Inkscape) Transparency is used (non-zero) for the text in Inkscape, but the package 'transparent.sty' is not loaded}%
    \renewcommand\transparent[1]{}%
  }%
  \providecommand\rotatebox[2]{#2}%
  \newcommand*\fsize{\dimexpr\f@size pt\relax}%
  \newcommand*\lineheight[1]{\fontsize{\fsize}{#1\fsize}\selectfont}%
  \ifx\svgwidth\undefined%
    \setlength{\unitlength}{283.46456693bp}%
    \ifx\svgscale\undefined%
      \relax%
    \else%
      \setlength{\unitlength}{\unitlength * \real{\svgscale}}%
    \fi%
  \else%
    \setlength{\unitlength}{\svgwidth}%
  \fi%
  \global\let\svgwidth\undefined%
  \global\let\svgscale\undefined%
  \makeatother%
  \begin{picture}(1,0.55)%
    \lineheight{1}%
    \setlength\tabcolsep{0pt}%
    \put(0,0){\includegraphics[width=\unitlength,page=1]{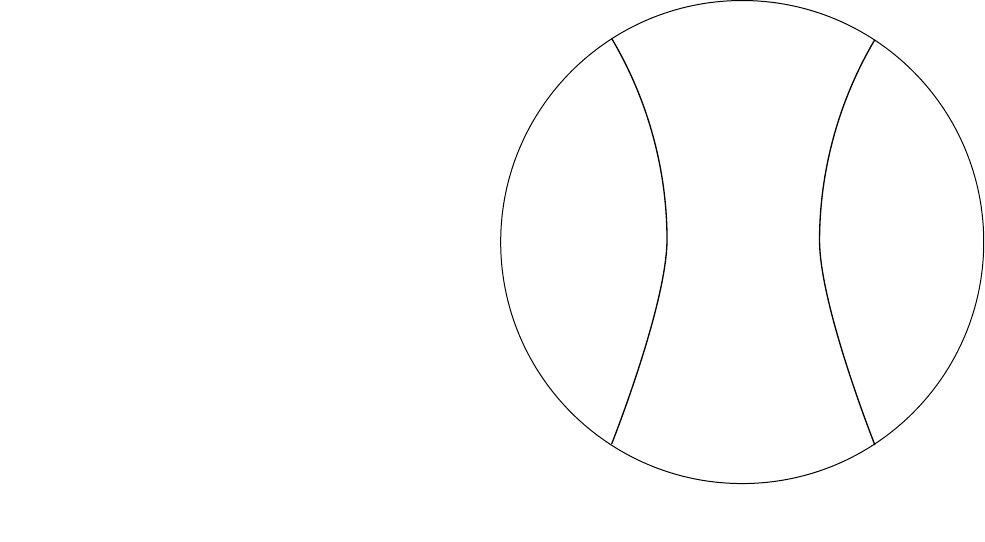}}%
    \put(0.56038708,0.3723567){\makebox(0,0)[lt]{\lineheight{1.25}\smash{\begin{tabular}[t]{l}$S_1$\end{tabular}}}}%
    \put(0.88395849,0.37235783){\makebox(0,0)[lt]{\lineheight{1.25}\smash{\begin{tabular}[t]{l}$S_2$\end{tabular}}}}%
    \put(0.73879445,0.20781972){\makebox(0,0)[lt]{\lineheight{1.25}\smash{\begin{tabular}[t]{l}$T$\end{tabular}}}}%
    \put(0,0){\includegraphics[width=\unitlength,page=2]{handle_decomposition_of_D2.pdf}}%
    \put(0.22028349,0.02031746){\makebox(0,0)[lt]{\lineheight{1.25}\smash{\begin{tabular}[t]{l}$a)$\end{tabular}}}}%
    \put(0.73059222,0.02031268){\makebox(0,0)[lt]{\lineheight{1.25}\smash{\begin{tabular}[t]{l}$b)$\end{tabular}}}}%
  \end{picture}%
\endgroup%
		
	\caption{$a)$ describes the handle decomposition of $\mathbb{D}^2$, consisting of three $0$ handles and two $1$ handles. The dashed lines are boundaries of handles, the dots are centers of handles, and the arrows describe the Liouville vector field. 
	$b)$ describes the decomposition of $A_1 \cup A_2 \cup B = \mathbb{D}^2 $. The part of $\partial B$, which is located inside the circle, is given by a union of unstable manifolds of the centers of two $1$ handles, with respect to the Liouville vector flow.}
	\label{figure handle_decomposition_of_D2}
\end{figure}

From the Weinstein handle decomposition, the centers of two $1$-handles have curves as their unstable manifolds with respect to the Liouville flow.
Then, those two curves divide $\mathbb{D}^2$ into three parts. 
This decomposition of $\mathbb{D}^2$ is given in Figure \ref{figure handle_decomposition_of_D2}, $b)$. 
Let $T$ be the piece of the decomposition such that the boundary of $T$ contains the both unstable manifolds.
Also, let $S_1, S_2$ denote the other two pieces. 

Based on the above arguments, the subcritical part $A_2^{2n} \times \mathbb{D}^2$ admits a decomposition into a union of nine pieces 
\[	h_0 \times S_1, h_0 \times S_2, h_0 \times T, h_\alpha \times S_1, h_\alpha \times S_2, h_\alpha \times T, h_\beta \times S_1, h_\beta \times S_2, h_\beta \times T.\]
One can observe that $h_\alpha \times (S_1 \cup S_2 \cup T)$ and $h_\beta \times (S_1 \cup S_2 \cup T)$ are equivalent to $(2n+2)$-dimensional index $n$-Weinstein handles, up to Weinstein homotopy.
In other words, we can construct the subcritical part $A_2^{2n} \times \mathbb{D}^2$ by attaching two Weinstein handles $H_\alpha$ and $H_\beta$ to $h_0 \times (S_1 \cup S_2 \cup T)$, instead of attaching $h_\alpha \times (S_1 \cup S_2 \cup T)$ and $h_\beta \times (S_1 \cup S_2 \cup T)$. 
The notation $H_\alpha$ (resp.\ $H_\beta$) denotes the Weinstein handle replacing $h_\alpha \times (S_1 \cup S_2 \cup T)$ (resp.\ $h_\beta \times (S_1 \cup S_2 \cup T)$).

We note that by attaching $H_\alpha$ and $H_\beta$ to  $h_0 \times (S_1 \cup s_2 \cup T)$, we obtain a Weinstein domain which does not admit a product Liouville structure.
In other words, by replacing $h_\alpha \times (S_1 \cup S_2 \cup T)$ (resp.\ $h_\beta \times (S_1 \cup S_2 \cup T)$) with $H_\alpha$ (resp.\ $H_\beta$), we modify the Liouville structure on them, and the modified structures do not respect the product structure.

To summarize, we decompose the subcritical part as a union of the following five pieces,
\begin{gather}
	\label{eqn pieces}
	h_0 \times S_1, h_0 \times S_2, h_0 \times T, H_\alpha, H_\beta.
\end{gather}
We note that the decomposition is not a Weinstein handle decomposition. 
This is because the first three pieces are not Weinstein handles.

From the above descriptions, we have a decomposition of $\Y$ into the union of five pieces in \eqref{eqn pieces} and $H_i$ for $1 \geq i \geq 2k+3$.
For the critical handles, without loss of generality, one can assume that the critical handles $H_1, \cdots, H_{2k+1}$ (resp.\ $H_{2k+2}, H_{2k+3}$) are attached to the $A_2^{2n} \times S_1$ (resp.\ $A_2^{2n} \times S_2$).
Figure \ref{figure base_of_Y} describes it. 
\begin{figure}[h]
	\centering
\begingroup%
  \makeatletter%
  \providecommand\color[2][]{%
    \errmessage{(Inkscape) Color is used for the text in Inkscape, but the package 'color.sty' is not loaded}%
    \renewcommand\color[2][]{}%
  }%
  \providecommand\transparent[1]{%
    \errmessage{(Inkscape) Transparency is used (non-zero) for the text in Inkscape, but the package 'transparent.sty' is not loaded}%
    \renewcommand\transparent[1]{}%
  }%
  \providecommand\rotatebox[2]{#2}%
  \newcommand*\fsize{\dimexpr\f@size pt\relax}%
  \newcommand*\lineheight[1]{\fontsize{\fsize}{#1\fsize}\selectfont}%
  \ifx\svgwidth\undefined%
    \setlength{\unitlength}{226.77165354bp}%
    \ifx\svgscale\undefined%
      \relax%
    \else%
      \setlength{\unitlength}{\unitlength * \real{\svgscale}}%
    \fi%
  \else%
    \setlength{\unitlength}{\svgwidth}%
  \fi%
  \global\let\svgwidth\undefined%
  \global\let\svgscale\undefined%
  \makeatother%
  \begin{picture}(1,1)%
    \lineheight{1}%
    \setlength\tabcolsep{0pt}%
    \put(0,0){\includegraphics[width=\unitlength,page=1]{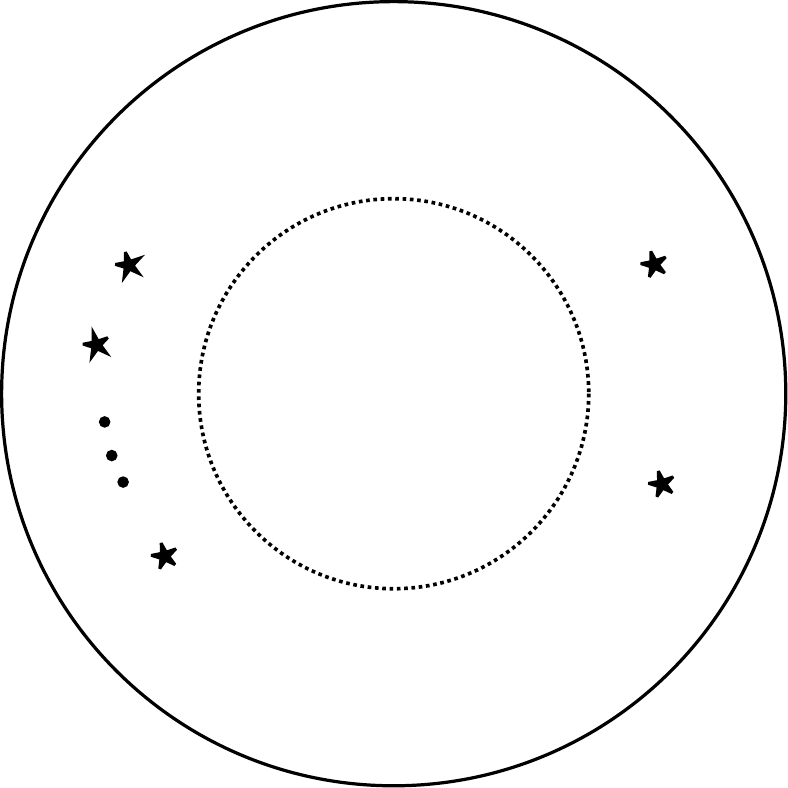}}%
    \put(0.1101611,0.61870225){\makebox(0,0)[lt]{\lineheight{1.25}\smash{\begin{tabular}[t]{l}$\alpha$\end{tabular}}}}%
\put(0.07155217,0.51903934){\makebox(0,0)[lt]{\lineheight{1.25}\smash{\begin{tabular}[t]{l}$\alpha$\end{tabular}}}}%
\put(0.16379107,0.25784074){\makebox(0,0)[lt]{\lineheight{1.25}\smash{\begin{tabular}[t]{l}$\alpha$\end{tabular}}}}%
\put(0.76804967,0.60988667){\makebox(0,0)[lt]{\lineheight{1.25}\smash{\begin{tabular}[t]{l}$\beta$\end{tabular}}}}%
\put(0.76515965,0.3352067){\makebox(0,0)[lt]{\lineheight{1.25}\smash{\begin{tabular}[t]{l}$\beta$\end{tabular}}}}%
    \put(0,0){\includegraphics[width=\unitlength,page=2]{base_of_Y.pdf}}%
    \put(0.30307267,0.56938348){\makebox(0,0)[lt]{\lineheight{1.25}\smash{\begin{tabular}[t]{l}$S_1$\end{tabular}}}}%
    \put(0.63203702,0.56938462){\makebox(0,0)[lt]{\lineheight{1.25}\smash{\begin{tabular}[t]{l}$S_2$\end{tabular}}}}%
    \put(0.48445354,0.40210423){\makebox(0,0)[lt]{\lineheight{1.25}\smash{\begin{tabular}[t]{l}$T$\end{tabular}}}}%
  \end{picture}%
\endgroup%
		
	\caption{The big circle means the base of $\pi: \Y \to \mathbb{C}$, the small dotted circle means the boundary of radius $1$ disk $\mathbb{D}_1^2$ which is the union of $S_1, S_2$, and $T$, i.e., the subcritical part is given as the inverse image of the dotted disk. The star marked points are singular values of $\pi$ decorated with the vanishing cycles, i.e., $H_1, \cdots, H_{2k+1}$ (resp.\ $H_{2k+2}, H_{2k+3}$) are attached to the `left' or $S_1$ (resp.\ `right' or $S_2$) side of the subcritical part.}
	\label{figure base_of_Y}
\end{figure}
 
For the later use, we modify the Weinstein domain which we obtained by attaching five pieces in \eqref{eqn pieces}. 
We observe that the attaching regions of $H_\alpha$ and $H_\beta$ are contained in $\partial \big(h_0 \times (S_1 \cup S_2 \cup T)\big)$. 
However, we can modify it so that the attaching region of $H_\alpha$ (resp.\ $H_\beta$) is contained in $\partial h_0 \times S_1$ (resp.\ $h_0 \times S_2$). 
We note that the modification does not change the symplectic completion of the resulting Weinstein domain, up to Weinstein homotopy. 

By the above modification, one can observe that the resulting Weinstein domain, even after attaching critical handles, does not change up to Weinstein homotopy. 
However, the Lagrangian skeleton changes. 
Figure \ref{figure skeleton} describes the change on Lagrangian skeleta.

\begin{figure}[h]
	\centering
\begingroup%
  \makeatletter%
  \providecommand\color[2][]{%
    \errmessage{(Inkscape) Color is used for the text in Inkscape, but the package 'color.sty' is not loaded}%
    \renewcommand\color[2][]{}%
  }%
  \providecommand\transparent[1]{%
    \errmessage{(Inkscape) Transparency is used (non-zero) for the text in Inkscape, but the package 'transparent.sty' is not loaded}%
    \renewcommand\transparent[1]{}%
  }%
  \providecommand\rotatebox[2]{#2}%
  \newcommand*\fsize{\dimexpr\f@size pt\relax}%
  \newcommand*\lineheight[1]{\fontsize{\fsize}{#1\fsize}\selectfont}%
  \ifx\svgwidth\undefined%
    \setlength{\unitlength}{283.46456693bp}%
    \ifx\svgscale\undefined%
      \relax%
    \else%
      \setlength{\unitlength}{\unitlength * \real{\svgscale}}%
    \fi%
  \else%
    \setlength{\unitlength}{\svgwidth}%
  \fi%
  \global\let\svgwidth\undefined%
  \global\let\svgscale\undefined%
  \makeatother%
  \begin{picture}(1,0.59)%
    \lineheight{1}%
    \setlength\tabcolsep{0pt}%
    \put(0,0){\includegraphics[width=\unitlength,page=1]{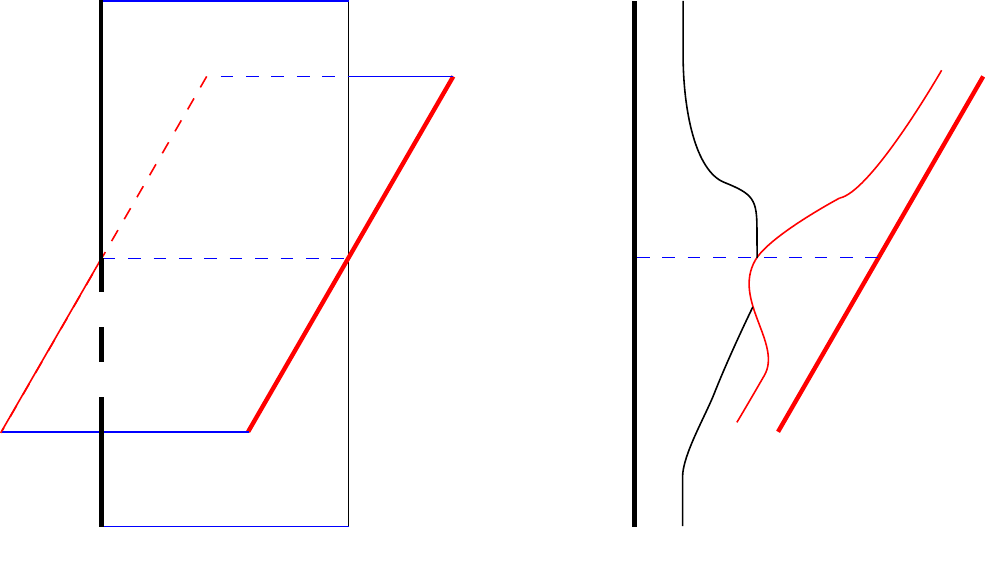}}%
    \put(0.17753204,0.01024532){\makebox(0,0)[lt]{\lineheight{1.25}\smash{\begin{tabular}[t]{l}$a).$\end{tabular}}}}%
    \put(0.74708847,0.01024269){\makebox(0,0)[lt]{\lineheight{1.25}\smash{\begin{tabular}[t]{l}$b).$\end{tabular}}}}%
    \put(0,0){\includegraphics[width=\unitlength,page=2]{skeleton.pdf}}%
  \end{picture}%
\endgroup%
		
	\caption{$a)$ is a conceptual picture describing the Lagrangian skeleton of the subcritical part $A_2^{2n} \times \mathbb{D}^2 \subset \Y$. We note that we are considering the product Weinstein structure for $a)$. The black, red, and blue lines correspond to $\alpha$ in $A_2^{2n}$, $\beta$ in $A_2^{2n}$, and the Lagrangian skeleton of $S_1 \cup S_2 \cup T = \mathbb{D}^2$, respectively. We note that the Lagrangian skeleton of $\Y$ is constructed by attaching $2k+3$ Lagrangian disks to $a)$, which correspond to the $2k+3$ critical handles. The $2k+1$ Lagrangian disks corresponding to $H_1, \cdots, H_{2k+1}$ are attached to the thick black line, and the other two Lagrangian disks corresponding to $H_{2k_2}, H_{2k+3}$ are attached to the thick red line.
	$b)$ is the Lagrangian skeleton of the subcritical part in the modified Weinstein domain. We note that there are no changes on the thick black and red lines.}
	\label{figure skeleton}
\end{figure}

Thanks to the modification, we can set 
\begin{gather}
	\label{eqn barY}
	\overline{Y} := \left(h_0 \times (S_1 \cup S_2 \cup T)\right) \cup H_\alpha \cup H_\beta \cup (\bigcup_{i=1}^{2k+3}H_i),\\
	\label{eqn bar Y_1}
	\overline{Y}_1 := h_0 \times (S_1 \cup T) \cup H_\alpha \cup \left(\bigcup_{i=1}^{2k+1} H_i\right), \\
	\label{eqn bar Y_2}
	\overline{Y}_2 := h_0 \times (S_2 \cup T) \cup H_\beta \cup \left(\bigcup_{i=2}^{3} H_{2k+i}\right).
\end{gather} 
Then, $\overline{Y}$ is equivalent to the original $\Y$ up to Weinstein homotopy. 
We note that Equations \eqref{eqn bar Y_1} and \eqref{eqn bar Y_2} make sense since the attaching region of $H_\alpha$ (resp.\ $H_\beta$) is contained in $h_0 \times S_1$ (resp.\ $h_0 \times S_2$).

This induces that
\begin{gather}
	\label{eqn equivalent Fukaya categories}
	\cW(\Y) \simeq \cW(\overline{Y}).
\end{gather}
Moreover, since $\{\overline{Y}_1, \overline{Y}_2\}$ is a Weinstein sectorial covering of $\overline{Y}$, the result of \cite{Ganatra-Pardon-Shende2} and Equation \eqref{eqn equivalent Fukaya categories} conclude that
\[\cW(\Y)\simeq\hocolim\left(\begin{tikzcd}[column sep=0.1cm]
\cW(\overline{Y}_1) & & \cW(\overline{Y}_2)\\
& \cW(\overline{Y}_1 \cap \overline{Y}_2)\ar[lu]\ar[ru]
\end{tikzcd}\right).\]
In order to complete the proof, we consider the wrapped Fukaya categories of Weinstein sectors $\overline{Y}_1, \overline{Y}_2$, and $\overline{Y}_1 \cap \overline{Y}_2$. 

For $\cW(\overline{Y}_1 \cap \overline{Y}_2)$, we observe that 
\[\overline{Y}_1 \cap \overline{Y}_2 = h_0 \times T \simeq \mathbb{D}^{2n} \times T.\]
Moreover, from the definition of $T$, one can easily check that as a Weinstein sector, $T$ is equivalent to $T^*[0,1]$. 
Thus, we have 
\[\cW(\overline{Y}_1 \cap \overline{Y}_2) \simeq \cW(\mathbb{D}^{2n} \times T^*[0,1]) \simeq \cW(\mathbb{D}^{2n}).\]
Since $\cW(\mathbb{D}^{2n})$ is quasi-equivalent to the zero category, so is the wrapped Fukaya category of the intersection.

For $\cW(\overline{Y}_1)$ (resp.\ $\cW(\overline{Y}_2)$), we consider the convex completion of the Weinstein sector $\overline{Y}_1$ (resp.\ $\overline{Y}_2$).
For the notion of convex completion, see \cite[Lemma 2.32]{Ganatra-Pardon-Shende1}.

By taking the convex completion of $\overline{Y}_1$ (resp.\ $\overline{Y}_2$), one obtains 
\begin{gather}
	\label{eqn Y_1}
	Y_1 := \big(h_0 \times (S_1 \cup S_2 \cup T)\big) \cup H_\alpha \cup \big(\bigcup_{i=1}^{2k+1}H_i\big),\\
	\label{eqn Y_2}
	Y_2 := \big(h_0 \times (S_1 \cup S_2 \cup T)\big) \cup H_\beta \cup \big(\bigcup_{i=2}^3H_{2k+i}\big),
\end{gather}
together with a stop.
Since the boundary of the Liouville sector $\overline{Y}_1$ (resp.\ $\overline{Y}_2$) is given by a product of $h_0 \simeq \mathbb{D}^{2n}$ and an $1$ dimensional curve, the stop is a Legendrian disk.
We note that the $1$ dimensional curve above is given in Figure \ref{figure handle_decomposition_of_D2}, as a part of the boundary of $S_1 \cup T$ (resp.\ $S_2 \cup T$), contained in the interior disk.

Since the stop is a disk, we have the followings.
\[\cW(\overline{Y}_1) \simeq \cW(Y_1), \text{  and  } \cW(\overline{Y}_2) \simeq \cW(Y_2).\]
Thus, we consider $\cW(Y_1)$ (resp.\ $\cW(Y_2)$) instead of $\cW(\overline{Y}_1)$ (resp.\ $\cW(\overline{Y}_2)$). 

In order to study $Y_1$ (resp.\ $Y_2$), we consider the construction of $Y_1$ (resp.\ $Y_2$).
Equation \eqref{eqn Y_1} (resp.\ Equation \eqref{eqn Y_2}) means that $Y_1$ (resp.\ $Y_2$) is obtained by attaching $H_\alpha$ and critical handles $H_1, \cdots, H_{2k+1}$ to a Weinstein domain
\[h_0 \times (S_1 \cup S_2 \cup T).\] 
Since $S_1 \cup S_2 \cup T = \mathbb{D}^2$, and since $h_0$ is a $0$-handle, it is equivalent to attach $H_\alpha$ and $H_1 \cdots, H_{2k+1}$ to the $2n+2$ dimensional index $0$-handle. 

From the above argument, it is easy to check that $Y_1$ is a Weinstein domain which is equivalent to the total space of an abstract Lefschetz fibration 
\[(T^*\alpha \simeq T^*S^n ; \alpha, \cdots, \alpha),\]
where the number of $\alpha$ above is $2k+1$. 
It is known that the total space of the abstract Lefschetz fibration is $A_{2k}^{2n+2}$. 
Thus, it concludes that 
\[\cW(\overline{Y}_1) \simeq \cW(A_{2k}^{2n+2}).\]

Similarly, $Y_2$ is equivalent to the total space of 
\[(T^*\beta \simeq T^*S^n ; \beta, \beta),\]
which is $T^*S^{n+1}$. 

It concludes that 
\[\cW(\Y)\simeq\hocolim\left(\begin{tikzcd}[column sep=0.1cm]
	\cW(A_{2k}^{2n+2}) & & \cW(T^*S^{n+1})\\
	& 0 \ar[lu]\ar[ru]
\end{tikzcd}\right),\]
i.e., 
\[\cW(\Y) \simeq \cW(A_{2k}^{2n+2}) \amalg \cW(T^*S^{n+1}).\]
\qed

\begin{remark}
	\label{rmk facts}
	We remark the following four facts before going further. 
	\begin{enumerate}
		\item From the proof of Lemma \ref{lemma wrapped Fukaya of Y}, one can observe that $\Y$ is constructed from $\overline{Y}_1 \simeq A^{2n+2}_{2k}$ and $\overline{Y}_2 \simeq T^*S^{n+1}$, by a `gluing construction' using a hypersurface. The gluing construction is described in \cite[Section 3.1]{Eliashberg}. Also, see \cite{Avdek}. 
		The hypersurface which used to glue $\overline{Y}_1$ and $\overline{Y}_2$ is a $(2n+1)$-dimensional disk, which corresponds to $\mathbb{D}^{2n} \times T^*_{\frac{1}{2}}[0,1]$ in $\mathbb{D}^{2n} \times T^*[0,1] \simeq \overline{Y}_1 \cap \overline{Y}_2$.
		
		In other words, $\Y$ is constructed using the notion of {\em end connected sum}.
		Then, one can observe that $\Y$ is obtained by taking an end connected sum of $A^{2n+2}_{2k}$ and $T^*S^{n+1}$.
		The notion of end connected sum is used in \cite{McLean}, in order to construct exotic symplectic structures on $\mathbb{R}^{2m}$ for $m \geq 4$.
		\item One can observe that both of $\X$ and $\Y$ can be constructed from $A_{2k}^{2n+2}$. 
		More precisely, since $\X$ is equivalent to $A_{2k+1}^{2n+2}$, $\X$ is obtained by plumbing $T^*S^{n+1}$ to $A_{2k}^{2n+2}$. 
		From the view point of Weinstein handle decomposition, plumbing $T^*S^{n+1}$ is equivalent to add a critical Weinstein handle. 
		Similarly, one can observe that the construction of $\X$ in Definition \ref{def X and Y} is equivalent to add a critical Weinstein handle and a canceling pair of Weinstein handles of index $n$ and $(n+1)$ to $A_{2k}^{2n+2}$. 
		
		One also can observe that the construction of $\Y$ in Definition \ref{def X and Y} is also equivalent to add a critical Weinstein handle and a canceling pair of Weinstein handles of index $n$ and $(n+1)$ to $A_{2k}^{2n+2}$, in a different way from the case of $\X$.
		The argument in Section \ref{subsection proof of lemma} explains a way to convert the canceling pair in $\Y$ as another canceling pair of Weinstein handles of index $0$ and $1$. 
		Then, the added critical handle and the index $0$ handle in the canceling pair construct $T^*S^{n+1}$, and the index $1$ handle in the canceling pair becomes the index $1$ handle `gluing' $A_{2k}^{2n+2}$ and $T^*S^{n+1}$.
		\item The above arguments give a way to construct a Lefschetz fibration of $Y$ where $Y$ is obtained by a end connected sum of Weinstein manifolds. 
		For example, if $Y$ is the end connected sum of three $T^*S^{n+1}$, then $Y$ is a total space of the abstract Lefschetz fibration 
		\[(A_3^{2n}; \alpha, \alpha, \beta, \beta, \gamma, \gamma),\]
		where $A_3^{2n}$ is the plumbing of three cotangent bundles of spheres $T^*\alpha$, $T^*\beta$, and $T^*\gamma$. 
		\item One can observe that the same technique, i.e., to use Lefschetz fibrations in order to construct Weinstein sectorial coverings, works if we calculate the wrapped Fukaya category of a total space of a Lefschetz fibration.
		However, taking the homotopy colimit for a general Lefschetz fibration would not as simple as the case of $\Y$. 
		For example, one can compute $\cW(\X)$ by using the same method. 
		However, for taking the homotopy colimit for $\X$, we should care the plumbing sector which is described in \cite[Section 6.2]{Ganatra-Pardon-Shende3}. 
		Then, the homotopy colimit formula in \cite{Karabas-Lee} will give the resulting wrapped Fukaya category. 
	\end{enumerate}
\end{remark}

\section{Different symplectic structures on $\X$ and $\Y$}
\label{section not sympelcto}
In Section \ref{section construction}, we constructed $\X$ and $\Y$. 
Also, we proved Lemmas \ref{lemma wrapped Fukaya of X} and \ref{lemma wrapped Fukaya of Y}.
By using them, we prove that $\X$ and $\Y$ have different wrapped Fukaya categories in Section \ref{section not sympelcto}. 

In order to distinguish their symplectic structures, we focus on the result of Lemma \ref{lemma wrapped Fukaya of Y}.
Lemma \ref{lemma wrapped Fukaya of Y} says that $\cW(\Y)$ can be written as a coproduct of two nontrivial categories, up to equivalence. 
On the other hand, Lemma \ref{lemma wrapped Fukaya of X} says that $\cW(\X) \simeq \cW(A^{2n+2}_{2k+1})$. 
Thus, if $\cW(A^{2n+2}_{2k+1})$ cannot be written as a coproduct in a nontrivial way, i.e., a coproduct of itself and something equivalent to the empty category. 
This completes the proof. 

In Section \ref{subsection decomposable lemme}, we prove Lemma \ref{lemma decomposable} describing a property of coproduct, and in Section \ref{subsection different wrapped Fukaya categories}, we show that $\cW(A^{2n+2}_{2k+1})$ cannot be a coproduct by using Lemma \ref{lemma decomposable}.

\subsection{Decomposability of dg categories}
\label{subsection decomposable lemme}

Let $k$ be the coefficient ring. 
For a $k$-linear dg category $\cC$, we write $\hom^*_{\cC}(A,B)$ for the morphism complex where $A,B$ are objects in $\cC$, and we write $\Hom^*_{\cC}(A,B)$ for its cohomology. 
Let $\Tw(\cC)$ be the dg category of (one-sided) twisted complexes in $\cC$.
For the above definitions, see \cite{Bondal-Kapranov}.

\begin{definition}
	\label{def generator}
	Let $\cC$ be a dg category. 
	We say that {\em the set of objects $\{D_i\, |\, i\in I\}$ of $\Tw(\cC)$ is a generating set of $\cC$} if
	\[\Tw(\cC)\simeq\Tw(\cD),\]
	up to quasi-equivalence, where $\cD$ is the full dg subcategory of $\Tw(\cC)$ with the set objects $\{D_i\,|\,i\in I\}$, and $I$ is an indexing set.
\end{definition}

\begin{proposition}
	We have the quasi-equivalence
	\[\Tw(\cD\sqcup\cE)\simeq\Tw(\cD)\oplus\Tw(\cE)\]
	where the objects of $\Tw(\cD)\oplus\Tw(\cE)$ are the direct sums of the objects of $\Tw(\cD)$ and $\Tw(\cE)$.
\end{proposition}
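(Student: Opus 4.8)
The plan is to write down an explicit comparison dg functor and verify it is a quasi-equivalence; the whole point is that $\cD\sqcup\cE$ has no nonzero morphisms between its two components, so that every twisted complex and every morphism of twisted complexes is automatically block diagonal. Recall that an object of $\Tw(\cD\sqcup\cE)$ is a finite collection of shifted objects $\bigoplus_i V_i[s_i]$ of $\cD\sqcup\cE$ together with a strictly upper-triangular, degree-one connection matrix $\delta=(\delta_{ij})$, $\delta_{ij}\in\hom_{\cD\sqcup\cE}(V_j,V_i)$, satisfying the Maurer--Cartan equation; morphisms are the usual matrices of morphisms in $\cD\sqcup\cE$ equipped with the twisted differential. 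The key observation, on which everything rests, is that $\hom_{\cD\sqcup\cE}(A,B)=0$ whenever $A$ and $B$ lie in different summands $\cD$ and $\cE$.

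First I would define a dg functor
\[\Phi:\Tw(\cD)\oplus\Tw(\cE)\longrightarrow\Tw(\cD\sqcup\cE)\]
sending a direct sum $X\oplus Y$, with $X\in\Tw(\cD)$ and $Y\in\Tw(\cE)$, to the twisted complex obtained by concatenating the underlying objects of $X$ and $Y$ (ordering all $\cD$-objects before all $\cE$-objects) and placing $\delta_X,\delta_Y$ in the two diagonal blocks. On morphisms $\Phi$ sends $(f,g)\in\hom_{\Tw(\cD)}(X,X')\oplus\hom_{\Tw(\cE)}(Y,Y')$ to the block-diagonal matrix $f\oplus g$. Because any off-diagonal block would land in a $\hom$-space between a $\cD$-object and an $\cE$-object, which vanishes, one checks immediately that $\Phi$ respects differentials and compositions, i.e.\ is a dg functor.

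Next I would prove $\Phi$ is quasi-fully faithful; in fact it induces an isomorphism of morphism complexes. For images $\Phi(X\oplus Y)$ and $\Phi(X'\oplus Y')$, a morphism in $\Tw(\cD\sqcup\cE)$ is a matrix whose entries are morphisms of $\cD\sqcup\cE$; the blocks $X\to Y'$ and $Y\to X'$ are forced to be zero by the vanishing of cross-component $\hom$-spaces, so
\[\hom_{\Tw(\cD\sqcup\cE)}\bigl(\Phi(X\oplus Y),\Phi(X'\oplus Y')\bigr)\cong\hom_{\Tw(\cD)}(X,X')\oplus\hom_{\Tw(\cE)}(Y,Y')\]
as complexes, and this isomorphism is exactly $\Phi$ on morphisms. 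For essential surjectivity I would take an arbitrary twisted complex $(V,\delta)$ in $\cD\sqcup\cE$, partition its underlying objects into those lying in $\cD$ and those lying in $\cE$, and note that the same vanishing forces $\delta$ to be block diagonal with respect to this partition. Hence $(V,\delta)$ is isomorphic in $\Tw(\cD\sqcup\cE)$ (after reordering to separate the two blocks, which preserves strict upper-triangularity once the blocks are disjoint) to $\Phi(V^{\cD}\oplus V^{\cE})$, so $\Phi$ is essentially surjective and therefore a quasi-equivalence.

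There is no genuine conceptual obstacle here: once the cross-component $\hom$-spaces are seen to vanish, both block-diagonalizations are automatic. The only point requiring care is the bookkeeping with the sign, shift, and ordering conventions for twisted complexes --- in particular, checking that the Maurer--Cartan equation for $(V,\delta)$ decouples into the two Maurer--Cartan equations for the diagonal blocks, and that reordering the objects to separate the $\cD$- and $\cE$-parts yields an isomorphic twisted complex. These are routine verifications within the formalism of \cite{Bondal-Kapranov}.
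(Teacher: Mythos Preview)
Your proposal is correct and follows essentially the same approach as the paper: both arguments rest on the single observation that cross-component $\hom$-spaces in $\cD\sqcup\cE$ vanish, forcing every twisted complex and every morphism to be block diagonal. The paper compresses this into one paragraph, phrasing the essential surjectivity as $F\simeq\Cone(D\xrightarrow{0}E)\simeq D[1]\oplus E$ after rearranging the $\cD$- and $\cE$-objects, whereas you write out the comparison functor $\Phi$ and check quasi-full-faithfulness explicitly; but the content is the same.
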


\begin{proof}
	If $F\in\Tw(\cD\sqcup\cE)$, there exists a twisted complex for $F$ consisting of the objects of $\cD$ and $\cE$. Since there is no nonzero morphisms between $\cD$ and $\cE$, we can rearrange the twisted complex in such a way that all the objects of $\cD$ are on the left, and all the objects of $\cE$ are on the right. This means that
	\[F\simeq\Cone(D\xrightarrow{0}E)\simeq D[1]\oplus E\in\Tw(\cD)\oplus\Tw(\cE)\]
	for some $D\in\Tw(\cD)$ and $E\in\Tw(\cE)$. Similarly, the converse is true.
\end{proof}

\begin{definition}
	Let $\cC$ be a dg category.
	We call $\Tw(\cC)$ \textit{decomposable} if $\cC$ is pretriangulated equivalent to a coproduct of nonempty dg categories, i.e.
	\[\Tw(\cC)\simeq\Tw(\cD\sqcup\cE)\simeq\Tw(\cD)\oplus\Tw(\cE)\]
	up to quasi-equivalence for some nonempty dg categories $\cD$ and $\cE$.
	Otherwise, $\Tw(\cC)$ is called \textit{indecomposable}.
	We call $\Tw(\cC)$ decomposable with $n$-components, if
	\[\Tw(\cC)\simeq\Tw(\cC_1\sqcup\cC_2\sqcup\ldots\sqcup\cC_n)\simeq\Tw(\cC_1)\oplus\Tw(\cC_2)\oplus\ldots\oplus\Tw(\cC_n)\]
	up to quasi-equivalence for some nonempty dg categories $\cC_1,\cC_2,\ldots,\cC_n$ such that $\Tw(\cC_i)$ is indecomposable for all $i$. This is well-defined.
\end{definition}

\begin{lemma}
	\label{lemma decomposable}
	Let $\cC$ be a dg category. Assume that $\{F_i\, |\, i\in I\}$ is a generating set for $\cC$ satisfying
	\[\Hom^*(F_i,F_j)\neq 0 \text{  or  } \Hom^*(F_j,F_i)\neq 0,\]
	for all $i,j\in I$, and
	\[\Hom^0(F_\ell,F_\ell)=k\]
	for all $\ell\in I$. Then $\Tw(\cC)$ is indecomposable.
\end{lemma}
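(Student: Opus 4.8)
The plan is to argue by contradiction: suppose $\Tw(\cC)$ is decomposable, so that
\[\Tw(\cC)\simeq\Tw(\cD)\oplus\Tw(\cE)\]
for nonempty dg categories $\cD$ and $\cE$ with no nonzero morphisms between them. The guiding principle is that the hypotheses say the generating set $\{F_i\}$ is ``connected'' (any two generators have a nonzero $\Hom$ in one direction) and that each generator is ``indecomposable'' (its degree-zero endomorphisms are just $k$), and a connected, indecomposable generating set cannot be split across a coproduct. First I would fix the equivalence and regard each generator $F_i$ as an object of $\Tw(\cD)\oplus\Tw(\cE)$, hence write $F_i\simeq D_i[1]\oplus E_i$ with $D_i\in\Tw(\cD)$, $E_i\in\Tw(\cE)$, using the direct-sum description from the Proposition above.

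The key step is to show each $F_i$ lies entirely in one summand, i.e. for every $i$ either $E_i\simeq 0$ or $D_i\simeq 0$. This is where I would use the condition $\Hom^0(F_i,F_i)=k$: since there are no morphisms between $\cD$ and $\cE$,
\[\Hom^0(F_i,F_i)\cong\Hom^0(D_i,D_i)\oplus\Hom^0(E_i,E_i),\]
and each summand contains the identity endomorphism of its factor whenever that factor is nonzero. If both $D_i$ and $E_i$ were nonzero, this forces $\dim_k\Hom^0(F_i,F_i)\geq 2$, contradicting $\Hom^0(F_i,F_i)=k$. Hence every $F_i$ is (up to shift) purely in $\Tw(\cD)$ or purely in $\Tw(\cE)$; this partitions the index set $I=I_\cD\sqcup I_\cE$. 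Both parts are nonempty, since $\{F_i\}$ generates and a generator of $\Tw(\cD)\oplus\Tw(\cE)$ must hit both factors (otherwise one of $\cD,\cE$ would be empty, contradicting the decomposition being nontrivial).

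To finish, I would pick $i\in I_\cD$ and $j\in I_\cE$ and apply the connectivity hypothesis
\[\Hom^*(F_i,F_j)\neq 0\quad\text{or}\quad\Hom^*(F_j,F_i)\neq 0.\]
But $F_i$ lives in $\Tw(\cD)$ and $F_j$ in $\Tw(\cE)$, and the coproduct structure forces all morphisms between the two summands to vanish in every degree, so both $\Hom^*(F_i,F_j)$ and $\Hom^*(F_j,F_i)$ are zero. This contradicts the hypothesis, and therefore $\Tw(\cC)$ is indecomposable. The main obstacle I anticipate is the bookkeeping in the second step: I must make sure the identity component of $\Hom^0(F_i,F_i)$ genuinely decomposes as claimed, which relies on the functoriality of $\Hom$ under the equivalence and on the fact that the idempotent projecting $F_i$ onto its $\Tw(\cD)$-part is itself a degree-zero endomorphism distinct from $0$ and the identity when both factors survive. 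Phrasing this carefully (e.g. via the two orthogonal idempotents summing to $\mathrm{id}_{F_i}$) is the delicate part; everything else follows formally from the absence of cross-morphisms between $\cD$ and $\cE$.
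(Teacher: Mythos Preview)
Your proposal is correct and follows essentially the same approach as the paper's proof: both decompose each $F_i$ as $D_i\oplus E_i$, use $\Hom^0(F_i,F_i)=k$ to force each $F_i$ into a single summand (the paper's Case 1), and then combine the connectivity hypothesis with the generating property to reach a contradiction (the paper's Case 2). The only difference is cosmetic---the paper phrases it as a contrapositive with an explicit case split, while you run a direct contradiction---but the logical content is identical.
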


\begin{proof}
	We will prove the contrapositive. Assume $\Tw(\cC)$ is decomposable, and
	\[\Hom^*(F_i,F_j)\neq 0 \text{  or  } \Hom^*(F_j,F_i)\neq 0,\]
	for all $i,j\in I$. We will show that $\Hom^0(F_\ell,F_\ell)\neq k$ for some $\ell\in I$.
	
	Since $\Tw(\cC)$ is decomposable, there exist nonempty dg categories $\cD$ and $\cE$ such that
	\[\Tw(\cC)\simeq\Tw(\cD\sqcup\cE)\simeq\Tw(\cD)\oplus\Tw(\cE)\]
	up to quasi-equivalence. For all $i\in I$, since $F_i\in\Tw(\cC)$, we have
	\[F_i\simeq D_i\oplus E_i\]
	for some $D_i\in\Tw(\cD)$ and $E_i\in\Tw(\cE)$.
	We note that 
	\begin{gather*}
		\Hom^*(D_i,E_j)=0 \text{  and  } \Hom^*(E_j,D_i)=0,
	\end{gather*}
	for any $i,j\in I$.
	
	\textbf{Case 1:} Assume there exists $\ell\in I$ such that $D_\ell\not\simeq 0$ and $E_\ell\not\simeq 0$. Then,
	\begin{align*}
		\hom^*(F_\ell,F_\ell)&\simeq \hom^*(D_\ell\oplus E_\ell,D_\ell\oplus E_\ell)\\
		&\simeq \hom^*(D_\ell,D_\ell)\oplus \hom^*(E_\ell,E_\ell)\oplus \hom^*(D_\ell,E_\ell)\oplus \hom^*(E_\ell,D_\ell).
	\end{align*}
	Since $D_\ell\not\simeq 0$ and $E_\ell\not\simeq 0$, we have
	\begin{gather*}
		0\neq [1_{D_\ell}]\in\Hom^0(D_\ell,D_\ell), \text{ and  } 0\neq [1_{E_\ell}]\in\Hom^0(E_\ell,E_\ell) .
	\end{gather*}
	Hence, we get
	\[k\{[1_{D_\ell}]\}\oplus k\{[1_{E_\ell}]\}\subset\Hom^0(F_\ell,F_\ell),\]
	and consequently,
	\[\Hom^0(F_\ell,F_\ell)\neq k,\]
	which is what we wanted to show.
	
	\textbf{Case 2:} Assume Case 1 is not true. Then either $D_i\simeq 0$ for all $i\in I$ or $E_i\simeq 0$ for all $i\in I$. If not, there exist $i,j\in I$ with $i\neq j$ such that
	\[F_{i}\simeq D_{i}\not\simeq 0,\]
	and
	\[F_{j}\simeq E_{j}\not\simeq 0.\]
	This implies that
	\begin{gather*}
		\Hom^*(F_{i},F_{j})=0, \text{  and } \Hom^*(F_{j},F_{i})=0,
	\end{gather*}
	which contradicts with the assumption on $\{F_i\,|\, i\in I\}$ at the start of the proof. Thus, without loss of generality, we can assume $E_i\simeq 0$ for all $i\in I$. Then $F_i\simeq D_i\in\Tw(\cD)$ for all $i\in I$. Since $\cE$ is nonempty, there exists $E\in\cE$ such that $E\not\simeq 0$. However,
	\[\Hom^*(F_i,E)=0,\]
	for all $i\in I$. This means that $E\simeq 0$, since $\{F_i\,|\,i\in I\}$ generates $\cC$ and $E\in\Tw(\cC)$. This is a contradiction. Hence, Case 1 holds.
\end{proof}

\begin{remark}
	\label{rmk decomposable}
	Before going further, we remark that Lemma \ref{lemma decomposable} gives us a criterion for determining whether $\Tw(\cC)$ is decomposable or not. We will use this in Section \ref{subsection different wrapped Fukaya categories}, in order to show that $\cW(\X)$ is indecomposable.
\end{remark}

\subsection{Distinguishing wrapped Fukaya categories of $\X$ and $\Y$}
\label{subsection different wrapped Fukaya categories}
We prove the following proposition in Section \ref{subsection different wrapped Fukaya categories}.

\begin{proposition}
	\label{prop not symplectomorphic}
	The Weinstein manifolds $\X$ and $\Y$ are not exact deformation equivalent. 
\end{proposition}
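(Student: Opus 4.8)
The plan is to leverage the two computations already in hand, namely Lemma \ref{lemma wrapped Fukaya of X} giving $\cW(\X)\simeq\cW(A^{2n+2}_{2k+1})$ and Lemma \ref{lemma wrapped Fukaya of Y} giving $\cW(\Y)\simeq\cW(A^{2n+2}_{2k})\amalg\cW(T^*S^{n+1})$, together with the decomposability criterion of Lemma \ref{lemma decomposable}. Since the wrapped Fukaya category is an invariant of the Weinstein (hence exact deformation) equivalence class, it suffices to show $\cW(\X)$ and $\cW(\Y)$ are not equivalent as (pretriangulated) $A_\infty$-categories. The structural distinction I would exploit is exactly the one advertised in the introduction: $\cW(\Y)$ is decomposable into two nonempty components, whereas I will show $\cW(\X)\simeq\cW(A^{2n+2}_{2k+1})$ is \emph{indecomposable}. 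Because $T^*S^{n+1}$ has nonzero wrapped Fukaya category (its cohomology being that of the based loop space of $S^{n+1}$), the coproduct expressing $\cW(\Y)$ is a genuinely nontrivial decomposition with two indecomposable-or-further-decomposable nonempty factors, so its number of connected components is at least two.

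The heart of the argument is therefore to verify that $\cW(A^{2n+2}_{2k+1})$ satisfies the hypotheses of Lemma \ref{lemma decomposable}, forcing $\Tw$ of it to be indecomposable. First I would exhibit a generating set: the $A_{2k+1}$ plumbing is generated by the cocores of its critical handles, equivalently by the Lagrangian zero-section spheres $L_1,\ldots,L_{2k+1}$ (one $T^*S^{n+1}$ for each node of the $A_{2k+1}$ Dynkin chain), and this is a standard generation statement for plumbings via their Lefschetz presentation. Next I would check the two numerical conditions. For the self-Hom condition, each $L_\ell$ is a Lagrangian sphere and $\Hom^0(L_\ell,L_\ell)=k$ holds because the relevant degree-zero wrapped/Floer self-morphisms of a single exact Lagrangian sphere reduce to $H^0(S^{n+1};k)=k$ in this grading range. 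For the connectivity condition, I would use the $A_{2k+1}$ Dynkin structure: adjacent spheres $L_i,L_{i+1}$ intersect transversally in a single point, so $\Hom^*(L_i,L_{i+1})\neq0$, and thus the ``intersection graph'' on the generators is connected, which upgrades to $\Hom^*(F_i,F_j)\neq0$ or $\Hom^*(F_j,F_i)\neq0$ for \emph{all} pairs once one observes that nonvanishing propagates along a connected chain (any two nodes are joined by a path of nonzero morphism spaces, and a decomposition would have to respect such paths).

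With indecomposability of $\cW(\X)$ established, the conclusion is immediate: if $\X$ and $\Y$ were exact deformation equivalent then $\cW(\X)\simeq\cW(\Y)$ up to pretriangulated equivalence, but $\cW(\X)$ is indecomposable while $\cW(\Y)$ is decomposable into two nonempty components, a contradiction. I would phrase this last step as a comparison of the number of connected components, which is $1$ for $\cW(\X)$ and at least $2$ for $\cW(\Y)$.

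The step I expect to be the main obstacle is the connectivity hypothesis of Lemma \ref{lemma decomposable} as stated, since it demands $\Hom^*(F_i,F_j)\neq0$ or $\Hom^*(F_j,F_i)\neq0$ for \emph{every} pair $i,j$, not merely for adjacent nodes of the Dynkin diagram. For $A_{2k+1}$ one must argue that non-adjacent cocores also have nonvanishing morphism spaces in the \emph{wrapped} category — this is plausible because wrapping allows a Reeb-chord or continuation contribution between any two cocores in a connected plumbing, but it requires care to confirm that wrapping does not produce total cancellation; alternatively one can replace the hypothesis by a connectivity-of-the-generation-graph argument and prove a variant of Lemma \ref{lemma decomposable} needing only that the generators cannot be partitioned into two mutually $\Hom$-orthogonal subsets, which the single-intersection-point adjacencies of the $A_{2k+1}$ chain already guarantee. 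I would make sure the precise form of Lemma \ref{lemma decomposable} being invoked matches whichever connectivity statement is cleanest to verify for the $A$-type plumbing.
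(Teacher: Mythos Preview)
Your overall strategy---show $\cW(\Y)$ is decomposable while $\cW(\X)$ is not, via Lemma \ref{lemma decomposable}---matches the paper exactly. The execution, however, differs in an important way, and your version contains a terminological slip and a gap.

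First, the slip: cocores of the critical handles are \emph{not} the zero-section spheres. The cocores are the cotangent fibres (noncompact Lagrangian disks); the zero sections are the compact cores. These are Koszul-dual generating sets, and both generate $\cW(A_{2k+1}^{2n+2})$, but they behave very differently for the hypotheses of Lemma \ref{lemma decomposable}. Your argument is visibly about the compact spheres $L_i$ (you invoke $H^0(S^{n+1})$ and single transverse intersection points), so let me take that as your intended generating set.

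The gap is then exactly the one you flag at the end: for compact spheres in the $A_{2k+1}$ plumbing, $\Hom^*(L_i,L_j)=0$ whenever $|i-j|\ge 2$, since these Lagrangians are disjoint and compact (wrapping does nothing). So the hypothesis of Lemma \ref{lemma decomposable} fails outright for this generating set, and your ``nonvanishing propagates along a connected chain'' sentence is simply false as a statement about $\Hom^*$. What \emph{is} true is that a decomposition must respect the $\Hom$-graph, so a graph-connectivity variant of Lemma \ref{lemma decomposable} would suffice; that variant is easy to prove by the same Case 1/Case 2 analysis, and with it your compact-sphere approach goes through cleanly. But the paper does not prove that variant.

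The paper instead verifies the hypotheses of Lemma \ref{lemma decomposable} \emph{as stated}, using the Lefschetz thimbles $T_1,\ldots$ of the standard fibration on $A_{2k+1}^{2n+2}$ as the generating set. For the pairwise condition it proves a separate Lemma (\ref{lemma Ak wrapped Fukaya}) showing $\Hom^*(T_i,T_j)\neq 0$ for \emph{every} pair, via an explicit index-positivity/Morse--Bott computation of Hamiltonian chords (following \cite{Bae-Kwon}): one exhibits a lowest-winding-number generator $\gamma_{0,\max}$ that cannot be hit by any differential. For the condition $\Hom^0(T_\ell,T_\ell)=k$ it cites \cite[Proposition 1.5]{Bae-Kwon} directly (over $k=\mathbb{Z}_2$). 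So the paper's route is: noncompact thimbles plus an honest wrapped-Floer computation, rather than compact spheres plus a strengthened decomposability lemma. Either route is valid; yours is arguably more elementary once the variant lemma is written down, but as submitted it does not match the lemma you are invoking.
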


We prove Proposition \ref{prop not symplectomorphic} by using Lemma \ref{lemma decomposable}. 
More precisely, we will prove that $\cW(\X)$ is not decomposable by Lemma \ref{lemma decomposable}.
In order to do this, we would like to point out that Lemma \ref{lemma decomposable} works when $\cC$ is an $A_{\infty}$-category. 
The reason is as follows: If $\cC$ is an $A_{\infty}$-category, we can consider the $A_{\infty}$-Yoneda embedding
\[\mathcal{Y}\colon\cC\hookrightarrow\Mod\,\cC\]
where $\Mod\,\cC$ is the dg category of $A_{\infty}$-modules over $\cC$.
See \cite{Seidel-bluebook}. 
Hence, $\cC$ is quasi-equivalent to the dg category $\mathcal{Y}(\cC)$. 
Consequently, Lemma \ref{lemma decomposable} applies on $\mathcal{Y}(\cC)$, and hence on $\cC$ since $\Hom^*(A,B)$ is the same in both categories for any objects $A$ and $B$.

\begin{proof}[Proof of Proposition \ref{prop not symplectomorphic}]
	We note that $\X \simeq A^{2n+2}_{2k+1}$ admits a Lefschetz fibration such that 
	\begin{itemize}
		\item the regular fiber is $T^*S^n$, and
		\item the number of singular values are $(k+1)$.
	\end{itemize} 
	This Lefschetz fibration is described in Section \ref{subsection wrapped Fukaya of X}.
	
	From the above Lefschetz fibration, we can choose a set of Lefschetz thimbles $\{T_1, \cdots, T_{k+1}\}$.
	Then, it is well-known that the set of Lefschetz thimbles is a generating set of $\cW(\X)$.
	
	Lemma \ref{lemma Ak wrapped Fukaya} which appears below says that 
	\[\Hom^*(T_i,T_j) \neq 0,\]
	for any $i \neq j$. 
	Thus, if $\cW(\X)$ is decomposable, there is $\ell \in \{1, \cdots, k+1\}$ such that 
	\[\Hom^0(T_\ell, T_\ell) \neq k,\]
	by Lemma \ref{lemma decomposable} and Remark \ref{rmk decomposable}.
	
	In the current literature, $\Hom(T_\ell,T_\ell)$ is already computed for any $\ell$. 
	We refer the reader to \cite{Bae-Kwon} or \cite{Lekili-Ueda}. 
	Since our case $\X \simeq A^{2n+2}_{2k+1}$ satisfies the condition of \cite[Proposition 1.5]{Bae-Kwon}, for all $\ell$, 
	\[\Hom^0(T_\ell, T_\ell) = k,\]
	where the base ring $k$ is $\mathbb{Z}_2$. 
	
	This concludes that $\cW(\X)$ cannot be a coproduct of two non-empty categories.
	However, Lemma \ref{lemma wrapped Fukaya of Y} says that $\cW(\Y)$ can be written as a coproduct.
	It concludes that $\cW(\X)$ and $\cW(\Y)$ are not equivalent.
	Thus, $\X$ and $\Y$ are not equivalent as Weinstein manifolds.
\end{proof}

\begin{lemma}
	\label{lemma Ak wrapped Fukaya}
	Let $T_1, \cdots, T_{k+1}$ be Lefschetz thimbles defined above. 
	Then,
	\[\Hom^*(T_i, T_j)\neq 0.\]
\end{lemma}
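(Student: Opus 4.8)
The plan is to compute the graded morphism spaces $\Hom^*(T_i,T_j)$ between Lefschetz thimbles directly from the Lefschetz fibration $\rho\colon A^{2n+2}_{2k+1}\to\mathbb{C}$ with regular fiber $T^*S^n$ and $k+1$ singular values whose vanishing cycles are all the zero section. The key structural input is that for a Lefschetz fibration whose total space is a Weinstein manifold, the thimbles $T_1,\dots,T_{k+1}$ form an exceptional-type generating collection, and the morphism complex $\hom^*(T_i,T_j)$ between two thimbles can be read off from the Floer-theoretic intersection data of their boundary Legendrians together with the vanishing-cycle Floer cohomology inside the fiber. Concretely, I would use the standard identification that computes $\Hom^*(T_i,T_j)$ in terms of $HF^*$ of the vanishing cycles $V_i,V_j\subset T^*S^n$, which here are all copies of the zero section $S^n$. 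Since $HF^*(S^n,S^n)$ in $T^*S^n$ is nonzero (it is the cohomology $H^*(S^n;k)$, hence at least two-dimensional for $n\geq 1$), the off-diagonal morphism spaces pick up contributions that do not vanish.

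First I would set up the computation for the ordered pair with $i<j$, where the directed structure of the Fukaya–Seidel category applies: here $\Hom^*(T_i,T_j)$ is governed by the intersections of the thimbles over a path in the base connecting the $i$-th and $j$-th critical values, and reduces to the Floer cohomology of the corresponding vanishing cycles in the fiber. Because every vanishing cycle is the zero section of $T^*S^n$, each such pairing computes $HF^*(S^n,S^n)\cong H^*(S^n)$, which is nonzero. Second, I would handle the pair with $i>j$: by the structure of the directed category one expects the ``backward'' morphisms to be controlled by the forward ones together with the $A_\infty$-relations, but in any case it suffices to establish nonvanishing for \emph{some} ordering of each pair, since the statement $\Hom^*(T_i,T_j)\neq 0$ only asserts nonvanishing of a single graded space; combined with the symmetric role of $i,j$ this covers all $i\neq j$. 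Third, I would invoke the known quiver description of $\cW(A^{2n+2}_{2k+1})$ from \cite{Bae-Kwon} or \cite{Lekili-Ueda} to cross-check that the thimbles correspond to the vertices of the $A_{k+1}$-type directed quiver with nonzero arrows between adjacent vertices, and that the higher morphisms propagate nonvanishing between all pairs along the linear $A_{k+1}$ graph.

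The main obstacle will be establishing nonvanishing for \emph{all} pairs $i\neq j$, not merely for adjacent thimbles. For an $A_{k+1}$-type configuration the naive geometric intersection of non-adjacent vanishing cycles may be empty, so a direct count gives zero for $|i-j|\geq 2$; the nonvanishing must then come from the wrapped (rather than compact) Floer theory, i.e.\ from positively wrapping the Lagrangians so that the thimbles develop Reeb chords/intersections at infinity. I would therefore phrase the computation in the wrapped setting, using that in $\cW(A^{2n+2}_{2k+1})$ the generating thimbles satisfy $\Hom^*(T_i,T_j)\neq 0$ for all $i,j$ because the linear $A_{k+1}$ quiver is connected, so every pair of vertices is joined by a nonzero composition of arrows in the wrapped category (equivalently, the wrapped morphism space between any two thimbles contains the image of a nonzero composite of the generating chords). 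This connectivity of the $A_{k+1}$ quiver is exactly the Floer-theoretic statement that lets one conclude nonvanishing between \emph{every} pair, and it is the point I expect to require the most care to justify rigorously.
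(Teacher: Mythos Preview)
Your approach is genuinely different from the paper's, but it has a real gap that you partially recognize yet misdiagnose.

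The paper does not reduce to fiber Floer cohomology at all. It works directly with the wrapped Floer complex $CW^*(T_i,T_j)$ using the explicit affine model $\{x_0^{k+1}+x_1^2+\cdots+x_{n+1}^2=1\}$, chooses radial Hamiltonians pulled back from the base, and observes that Hamiltonian chords come in $S^n$-families indexed by a winding number $m\geq 0$. After Morse--Bott perturbation the generators are $\gamma_{m,\min},\gamma_{m,\max}$, and a Maslov index computation (index-positivity in the sense of \cite{Bae-Kwon}) shows that no differential can hit $\gamma_{0,\max}$. Thus $\Hom^*(T_i,T_j)\neq 0$ for every pair simultaneously, with no separate treatment of ``adjacent'' versus ``non-adjacent'' thimbles.

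Your proposal rests on the identification $\hom^*(T_i,T_j)\cong HF^*(V_i,V_j)$ for $i<j$, but that identification holds in the \emph{directed} Fukaya--Seidel category (equivalently, the partially wrapped category with the fiber as stop), not in $\cW(\X)$. Passing from the directed category to the wrapped category is a localization, and a nonzero morphism in the former can in principle die in the latter; you cannot conclude $\Hom^*_{\cW}(T_i,T_j)\neq 0$ from $HF^*(V_i,V_j)\neq 0$ without further argument. Your ``main obstacle'' paragraph locates the difficulty in the wrong place: in this Lefschetz fibration every vanishing cycle is the \emph{same} zero section of $T^*S^n$, so non-adjacent $V_i,V_j$ intersect just as well as adjacent ones; the actual issue is the passage from partially wrapped to fully wrapped. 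Finally, the ``connectivity of the $A_{k+1}$ quiver'' argument is circular as stated: saying that nonzero composites of generating arrows reach every pair presupposes that those composites are nonzero in $\cW$, which is exactly what must be proved. To make your route work you would need either an explicit computation of the wrapped endomorphism algebra (as in \cite{Lekili-Ueda} or \cite{Bae-Kwon}) showing the relevant products are nonzero, or an index/action argument showing the directed morphisms survive stop removal---and the latter is essentially what the paper does.
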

\begin{proof}
	We use the index-positivity argument described in \cite{Bae-Kwon}, in order to show that $Hom^*(T_i, T_j) \neq 0$. 
	Let us use a following explicit description of $A_{2k+1}^{2n+2} \simeq X^{2n+2}_k$, and of the Lefschetz fibration on $A_{2k+1}^{2n+2}$
	\begin{gather*}
		\pi: A^{2n+2}_{k+1}=\{x_0^{k+1}+x_1^2+\cdots+x_{n+1}^2=1\} \to \mathbb{C}, \\
		(x_0, \cdots, x_{n+1}) \mapsto x_0.
	\end{gather*}
	
	The critical values of the fibration are $\xi^i_{k+1} \hskip 0.1cm (i=0, \ldots, k)$ where $\xi^i_{k+1} = e^{\frac{2 \pi i}{k+1}} \in \mathbb{C}$.
	Let 
	\[\Gamma_i = \{r \xi_{k+1}^{i}: r\geq1\}, \hskip 0.1cm \text{for  } i=0, \cdots, k,\] 
	denote rays in $\mathbb{C}$, emanating from each critical points. 
	The Lagrangians $T_i$ we are considering are thimbles over each $\Gamma_i$;
	\[T_i := \{(r \xi_{k+1}^{i}, x_1, \ldots, x_{n+1}) :  x_1^2 \cdots +x_{n+1}^2 =1-r^{k+1}, \hskip 0.1cm x_i \in \sqrt {-1} \mathbb R, \hskip 0.1cm i\geq1, \hskip 0.1cm r\geq1\}.\]
	We note that $Hom^*(T_i,T_j)$ is defined geometrically, by using the relation of Hamiltonian chords from $T_i$ to $T_j$. 
	
	 A sequence of admissible Hamiltonians $H_\ell$ we are using is of the form $h_\ell \circ \pi$, where $h_\ell : \mathbb C \to \mathbb R$ is a smooth function such that 
	 \begin{itemize}
	 	\item $h_\ell$ only depends on $\vert z\vert$,
	 	\item $h_\ell$ is $C^2$-small on a compact region, and
	 	\item $h_\ell(z)=\ell \vert z\vert^2$ when $\vert z\vert \gg 1$.
	 \end{itemize}  
	
	Hamiltonian chords from $T_i$ to $T_j$ of $H_\ell$ comes in families because the set of vanishing cycles of the thimbles have rotational symmetries. 
	Whenever we have a Hamiltonian chords of $h_\ell$ from $\Gamma_i$ to $\Gamma_j$ on the base, we get an $S^n$-family of corresponding chords of $H_\ell$ from $T_i$ to $T_j$. 
	Using a Morse-Bott type perturbation, we get two non-degenerate Hamiltonian chords coming from Morse homology of a sphere $S^n$. 
	We label the generators of the Floer complex as 
	\[\{\gamma_{m, min}, \gamma_{m, max}: m\in \mathbb Z_{\geq0}\},\] 
	where a non-negative number $m$ indicates the winding number of the corresponding base chord, and where min/max denotes the generators corresponding to a fundamental/point class of $H_{*, \mathrm{Morse}}(S^n)$. 
	In this setup, we have
	\begin{align*}
	\mu_{\mathrm{Maslov}}(\gamma_{m+1, max})-\mu_{\mathrm{Maslov}}(\gamma_{m, min})&=(n-1)(k+1)+2 \geq 4,\\
	\mu_{\mathrm{Maslov}}(\gamma_{m, min})-\mu_{\mathrm{Maslov}}(\gamma_{m, max})&= n \geq 2,
	\end{align*}
	when $n\geq2$ and $k\geq1$. 
	See Proposition 1.2 in \cite{Bae-Kwon} for more details.
	In particular, we conclude that there is no non-trivial Floer differential hitting $\gamma_{0, max}$. (In fact, the spectral sequence degenerates at this page whenever $k\geq 2$.) 
	This implies that $Hom^*(T_i, T_j)\neq 0$. 
\end{proof}

\section{Diffeomorphism between $\XX$ and $\YY$}
\label{section diffeo}
In Section \ref{section diffeo}, we prove Proposition \ref{prop diffeo} which studies the diffeomorphic classes of $\X$ and $\Y$. 
\begin{proposition}
	\label{prop diffeo}
	If $n = 2$, the Weinstein manifolds $\X$ and $\Y$ are diffeomorphic.
	If $n \geq 4$ is even, then $\XX$ and $\YY$ are diffeomorphic.
\end{proposition}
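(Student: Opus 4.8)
The plan is to treat Proposition \ref{prop diffeo} as a purely smooth statement and reduce it, using the computations already in hand, to a comparison of two explicit handlebodies. By Lemma \ref{lemma strucutre of X} we have $\X \simeq A^{2n+2}_{2k+1}$, the $A_{2k+1}$-plumbing of copies of $T^*S^{n+1}$, while the proof of Lemma \ref{lemma wrapped Fukaya of Y} together with Remark \ref{rmk facts} identifies $\Y$ with the end connected sum $A^{2n+2}_{2k}\#_e T^*S^{n+1}$; both identifications are through Weinstein homotopies, hence in particular diffeomorphisms. Thus it suffices to show that, as smooth manifolds, $A^{2n+2}_{2k+1}$ and $A^{2n+2}_{2k}\#_e T^*S^{n+1}$ coincide (for $n=2$), and likewise with $2k+1,2k$ replaced by $4k+1,4k$ (for even $n\geq 4$). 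Each side is an $n$-connected $(2n+2)$-manifold built from a single $0$-handle and $(2k+1)$ handles of index $n+1$, so both fall under the classification of highly connected handlebodies.

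First I would write down the framed attaching data of both sides. The plumbing $A^{2n+2}_{2k+1}$ is presented by a chain $\Lambda_1,\dots,\Lambda_{2k+1}$ of $n$-spheres in $S^{2n+1}=\partial D^{2n+2}$, with consecutive spheres Hopf-linked and each carrying the framing of the tangent bundle $TS^{n+1}$; the end connected sum replaces this by the length-$2k$ chain $\Lambda_1,\dots,\Lambda_{2k}$ together with one further sphere $\Lambda'$ that is unknotted and unlinked from the rest (the index $1$ handle of Definition \ref{def end connected sum} merging the two $0$-handles). The next step is to compute the algebraic invariants and check they agree. Since $n$ is even, $n+1$ is odd, the intersection form is skew, and in both cases one obtains the form on $\mathbb{Z}^{2k+1}$ given by the standard symplectic form on a rank-$2k$ summand together with a one-dimensional radical; moreover every generating sphere has normal bundle $TS^{n+1}$, so the normal-bundle invariant in $\pi_n(SO(n+1))$ is the same element for all of them on both sides.

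The substantive part is to promote this agreement of invariants to an honest diffeomorphism by handle calculus: I would slide the last handle $\Lambda_{2k+1}$ over the chain in order to unlink it, tracking the effect on the framings. A handle slide alters the attaching sphere by a framed connect-sum with a pushoff of the sphere it is slid over, so the framing change is recorded by an element of $\pi_n(SO(n+1))$, while the change in linking numbers is governed by the Euler numbers of the spheres involved (here zero, as $n+1$ is odd). Carrying out a sequence of such slides transforms the Hopf-linked chain into the unlinked configuration precisely when the accumulated framing correction vanishes. I expect this framing bookkeeping, rather than the homological matching, to be the main obstacle, and it is exactly where the arithmetic conditions of Proposition \ref{prop diffeo} enter: for $n=2$ one has $\pi_2(SO(3))=0$, so no correction can survive and the identification holds for every $k$; for even $n\geq 4$ the correction lands in a nontrivial part of $\pi_n(SO(n+1))$ and cancels only when the length of the plumbing chain is divisible by $4$, which is why the statement is then restricted to $\XX$ and $\YY$.

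Finally I would confirm that the boundaries match, since a diffeomorphism of the filled manifolds forces one boundary onto the other. Here $\partial A^{2n+2}_{2k+1}$ is the link of the $A_{2k+1}$ singularity, whereas $\partial\big(A^{2n+2}_{2k}\#_e T^*S^{n+1}\big)$ is the connected sum of the link of the $A_{2k}$ singularity with the unit cotangent bundle $ST^*S^{n+1}=V_2(\mathbb{R}^{n+2})$; the needed diffeomorphism between these Brieskorn-type manifolds is again sensitive to the index modulo a power of $2$, consistently with the $n=2$ versus $n\geq 4$ dichotomy. Once the framed attaching links are shown isotopic after the slides above and the boundaries are identified, the two handlebodies are diffeomorphic, proving Proposition \ref{prop diffeo}.
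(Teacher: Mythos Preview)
Your strategy is genuinely different from the paper's and is a reasonable outline, but the crucial step is only asserted, not carried out.  The paper does \emph{not} pass to the ambient Kirby picture and Wall's classification.  Instead it stays inside the abstract Lefschetz fibration \eqref{eqn X}, \eqref{eqn Y}: both $\X$ and $\Y$ are built by attaching the same $2k+3$ critical handles to the same subcritical part $A_2^{2n}\times\mathbb{D}^2$, and after Hurwitz moves all but one of the attaching Legendrians literally coincide.  The remaining pair of attaching spheres is induced by the Lagrangian spheres $\beta$ and $(\tau_\alpha)^{2k}(\beta)$ in the page $A_2^{2n}$, so the whole question becomes: are these two Lagrangian spheres smoothly isotopic in $A_2^{2n}$, with matching formal Legendrian framings?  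The paper produces the isotopy explicitly, using the auxiliary fibration $\rho:A_2^{2n}\to\mathbb{C}$ and the fact that for even $n$ the vanishing cycle $S^{n-1}$ carries a nonvanishing vector field, so one can ``lift'' it off the zero section inside the page and slide the matching path across the extra critical value (this is the construction of \cite{Maydanskiy}).  The framing obstruction is then identified as an element of $\pi_{n+1}(V_{n,2n+1},U_n)$, the group of formal Legendrian structures; this group is trivial for $n=2$ and is $\mathbb{Z}/2$ for even $n\ge 4$, which is exactly why one must square the move (i.e.\ pass from $2k$ to $4k$) in the higher-dimensional case.

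By contrast, your plan replaces this local picture by global handle slides in $S^{2n+1}$ and claims the accumulated framing correction lives in $\pi_n(SO(n+1))$ and vanishes precisely under the stated hypotheses.  That statement is plausible and, once made precise, should recover the same $0$ versus $\mathbb{Z}/2$ dichotomy, but you have not actually computed the framing change per slide, nor explained why $k$ slides (for $n=2$) or $2k$ slides (for $n\ge 4$) yield a trivial total---you say you ``expect'' this to be the main obstacle, and it is.  Two smaller points: first, the relevant obstruction group is not literally $\pi_n(SO(n+1))$ but the quotient that classifies framings of an embedded sphere up to isotopy (equivalently the formal Legendrian group the paper uses), so you should identify it carefully before invoking $\pi_2(SO(3))=0$; second, your final paragraph about matching the boundaries separately is unnecessary---once the framed attaching links are isotopic, the resulting handlebodies (and hence their boundaries) are automatically diffeomorphic, so there is no independent Brieskorn comparison to make.
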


We note that the main idea of Proposition \ref{prop diffeo} is given in \cite[Proposition 4.0.1]{Maydanskiy} and \cite{Maydanskiy-Seidel15}.
For more details, we refer the reader to them.

\begin{proof}[Proof of Proposition \ref{prop diffeo}]
In order to prove this, let us recall the construction of $\X$ and $\Y$. 

In Section \ref{subsection construction}, $\X$ and $\Y$ are given as total spaces of abstract Lefschetz fibrations in \eqref{eqn X} and \eqref{eqn Y}. 
Section \ref{subsection wrapped Fukaya of Y} gives more detailed construction of $\Y$, which is a Weinstein domain obtained by attaching critical handles $H_1, \cdots, H_{2k+3}$ to the subcritical part $A^{2n}_2 \times \mathbb{D}_1^2$. 
Similarly, $\X$ is equivalent to the Weinstein domain obtained by attaching critical handles $I_1, \cdots, I_{2k+3}$ to the same subcritical part $A^{2n}_2 \times \mathbb{D}_1^2$. 
For more detail on the constructions, we refer the reader to \cite[Lemma 16.9]{Seidel-bluebook}, \cite[Section 8]{Bourgeois-Ekholm-Eliashberg}. 
See also \cite{Gompf-Stipsicz}.

From \eqref{eqn X} and \eqref{eqn Y}, without loss of generality, one could assume that $I_i$ and $H_i$ are attached along the same Legendrian of $\partial \left(A^{2n}_2 \times \mathbb{D}_1^2\right)$, if $i \neq 2k+2$. 
Thus, in order to prove Proposition \ref{prop diffeo}, it is enough to consider the attaching Legendrian spheres of $I_{2k+2}$ and $H_{2k+2}$. 

The attaching Legendrian spheres of $I_{2k+2}$ and $H_{2k+2}$ are induced by Lagrangian spheres $(\tau_\alpha)^{2k}(\beta)$ and $\beta$. 
More precisely, these Lagrangian spheres induce Legendrian spheres with the canonical formal Legendrian structures as defined in \cite{Murphy}.
We note that when one attaches a critical handle along a Legendrian sphere, the formal Legendrian structure on the Legendrian sphere determines a diffeomorphism class of the resulting space.
This is because the formal Legendrian structure affects on the framing of the attached critical handle. 

In order to prove Proposition \ref{prop diffeo}, we need to compare the Legendrian spheres induced from $\beta$ and $(\tau_\alpha)^{2k}(\beta)$ together with their formal Legendrian structures for $n =2$, and to compare the Legendrian spheres induced from $\beta$ and $(\tau_\alpha)^{4k}(\beta)$ together with their formal Legendrian structures for even $n \geq 4$.
\vskip0.2in

\noindent{\em Construction of smooth isotopy}: 
First, we construct a smooth Lagrangian isotopy between $\beta$ and $(\tau_\alpha)^2(\beta)$ inside $A_2^{2n}$ for any even $n$. 
Then, it induces an isotopy between $\beta$ and $(\tau_\alpha)^{2k}(\beta)$, naturally. 
This construction is originally given in \cite[Section 5]{Maydanskiy}. 

By using the Lefschetz fibration $\rho$ on $A^{2n}_2$, $\beta$ (resp.\ $\tau_\alpha^2(\beta)$) is given as a union of vanishing cycles in fibers of $\rho$ along a curve in $\mathbb{C}$. 
Let $\gamma_1$ and $\gamma_2$ be the curves corresponding to $\beta$ and $\tau_\alpha^2(\beta)$, respectively. 
See Figure \ref{figure curves}.

\begin{figure}[h]
	\centering
\begingroup%
  \makeatletter%
  \providecommand\color[2][]{%
    \errmessage{(Inkscape) Color is used for the text in Inkscape, but the package 'color.sty' is not loaded}%
    \renewcommand\color[2][]{}%
  }%
  \providecommand\transparent[1]{%
    \errmessage{(Inkscape) Transparency is used (non-zero) for the text in Inkscape, but the package 'transparent.sty' is not loaded}%
    \renewcommand\transparent[1]{}%
  }%
  \providecommand\rotatebox[2]{#2}%
  \newcommand*\fsize{\dimexpr\f@size pt\relax}%
  \newcommand*\lineheight[1]{\fontsize{\fsize}{#1\fsize}\selectfont}%
  \ifx\svgwidth\undefined%
    \setlength{\unitlength}{170.07874016bp}%
    \ifx\svgscale\undefined%
      \relax%
    \else%
      \setlength{\unitlength}{\unitlength * \real{\svgscale}}%
    \fi%
  \else%
    \setlength{\unitlength}{\svgwidth}%
  \fi%
  \global\let\svgwidth\undefined%
  \global\let\svgscale\undefined%
  \makeatother%
  \begin{picture}(1,1)%
    \lineheight{1}%
    \setlength\tabcolsep{0pt}%
    \put(0,0){\includegraphics[width=\unitlength,page=1]{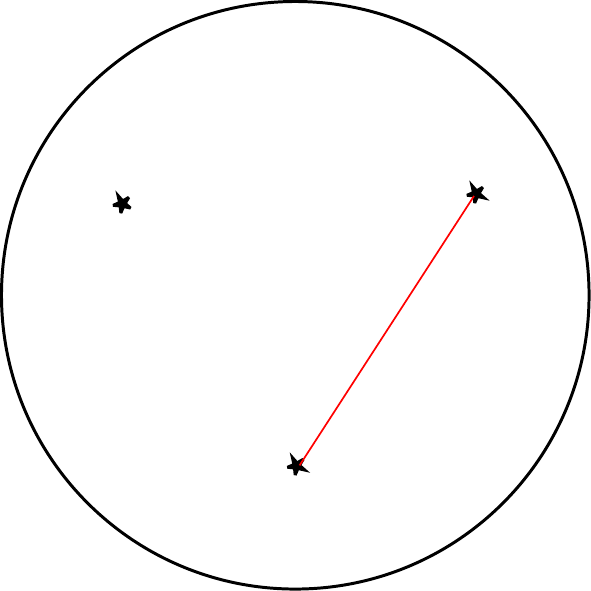}}%
    \put(0.68282248,0.40419123){\makebox(0,0)[lt]{\lineheight{1.25}\smash{\begin{tabular}[t]{l}$\gamma_1 = \rho(\beta)$\end{tabular}}}}%
    \put(0,0){\includegraphics[width=\unitlength,page=2]{curves.pdf}}%
    \put(0.27664639,0.79752686){\makebox(0,0)[lt]{\lineheight{1.25}\smash{\begin{tabular}[t]{l}$\gamma_2$\end{tabular}}}}%
  \end{picture}%
\endgroup%
		
	\caption{The red and blue curves on base of $\rho$ are $\gamma_1$ and $\gamma_2$, respectively.}
	\label{figure curves}
\end{figure}

We note that the regular fiber of $\rho$ is $T^*S^{n-1}$, and every vanishing cycle of $\rho$ is the zero section. 
From the structure of $T^*S^{n-1}$, when $n$ is even, we can lift the zero section in a Lagrangian isotopic way, but not Hamiltonian isotopic way.
This is because $S^{n-1}$ admits a non-vanishing vector field. 
The lift for the case of $n=2$ is given in Figure \ref{figure lift}, $a)$.
We note that we can control how much the vanishing cycle is lifted. 

\begin{figure}[h]
	\centering
\begingroup%
  \makeatletter%
  \providecommand\color[2][]{%
    \errmessage{(Inkscape) Color is used for the text in Inkscape, but the package 'color.sty' is not loaded}%
    \renewcommand\color[2][]{}%
  }%
  \providecommand\transparent[1]{%
    \errmessage{(Inkscape) Transparency is used (non-zero) for the text in Inkscape, but the package 'transparent.sty' is not loaded}%
    \renewcommand\transparent[1]{}%
  }%
  \providecommand\rotatebox[2]{#2}%
  \newcommand*\fsize{\dimexpr\f@size pt\relax}%
  \newcommand*\lineheight[1]{\fontsize{\fsize}{#1\fsize}\selectfont}%
  \ifx\svgwidth\undefined%
    \setlength{\unitlength}{340.15748031bp}%
    \ifx\svgscale\undefined%
      \relax%
    \else%
      \setlength{\unitlength}{\unitlength * \real{\svgscale}}%
    \fi%
  \else%
    \setlength{\unitlength}{\svgwidth}%
  \fi%
  \global\let\svgwidth\undefined%
  \global\let\svgscale\undefined%
  \makeatother%
  \begin{picture}(1,0.5)%
    \lineheight{1}%
    \setlength\tabcolsep{0pt}%
    \put(0,0){\includegraphics[width=\unitlength,page=1]{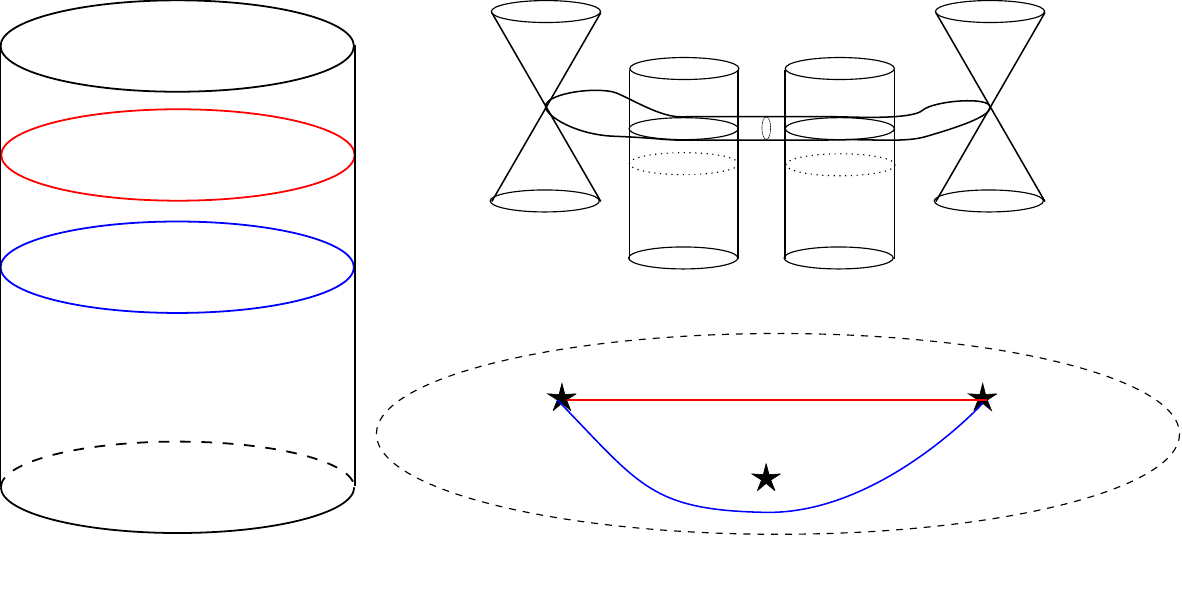}}%
    \put(0.14001753,0.01945877){\makebox(0,0)[lt]{\lineheight{1.25}\smash{\begin{tabular}[t]{l}$a).$\end{tabular}}}}%
    \put(0.63725478,0.01520618){\makebox(0,0)[lt]{\lineheight{1.25}\smash{\begin{tabular}[t]{l}$b).$\end{tabular}}}}%
  \end{picture}%
\endgroup%
		
	\caption{$a).$ The cylinder is the regular fiber $T^*S^1$, the blue curve is the vanishing cycle, i.e., the zero section, and the red curve is a lifted vanishing cycle.
		$b).$ The Lagrangian sphere isotopic to $\tau_\alpha^2(\beta)$ is the union of lifted vanishing cycles. In the regular fibers, the dotted circles are vanishing cycles without lifts.
		The curves $\gamma_1 = \rho(\beta)$ and $\gamma_2$ are colored red and blue on the base. }
	\label{figure lift}
\end{figure}	

After that, we consider the union of lifted vanishing cycles inside $\rho^{-1}(p)$.
The union runs over $\{p \in \gamma_2\}$. 
The union is over the fibers projected on $\gamma_2$.
How much the vanishing cycle in $\pi^{-1}(p)$ lifted is determined by the distance between $p$ and $\rho(\beta)$.
If a point $p$ on the base is close enough to $\rho(\beta)$, then we do not lift the vanishing cycle in $\rho^{-1}(p)$.
And, if a point $p$ is not close to $\rho(\beta)$, then we lift the vanishing cycles in $\rho^{-1}(p)$.
If $p_1$ is further from $\rho(\beta)$ than $p_2$, then we lift more the vanishing cycle in $\rho^{-1}(p_1)$ than that in $\rho^{-1}(p_2)$.  
Then, this union of lifted vanishing cycles is a Lagrangian sphere which is isotopic to $\tau_\alpha^2(\beta)$. 
Figure \ref{figure lift}, $b)$ describes the case of $n=2$. 

On the base of $\rho$, two curves $\gamma_1$ and $\gamma_2$ are isotopic to each other. 
Along the isotopy of curves connecting $\gamma_1$ and $\gamma_2$, we can construct a smooth family of Lagrangian spheres, whose each member is a union of lifted vanishing cycles.
Also, we consider the lifted vanishing cycles, the family of Lagrangian spheres does not touch the singular point.
Then, this family gives us an isotopy connecting $\beta$ and $\tau_\alpha^2(\beta)$. 
\vskip0.2in

\noindent{\em Formal Legendrian structures}:
Notice that for a given Legendrian sphere of dimension $n$, the set of formal Legendrian structures is given by $\pi_{n+1}(V_{n,2n+1},U_n)$, where $V_{n,2n+1}$ stands for the {\em Stiefel manifold}. 
For more detail, see \cite[Appendix]{Murphy}. 

For the case of $n =2$, \cite[Proposition A.4]{Murphy} says that all formal embeddings of $S^2$ are formally Legendrian isotopic, or equivalently, $\pi_3(V_{2,5},U_2)$ is trivial. 
Thus, the isotopy between $\beta$ and $(\tau_\alpha)^{2k}(\beta)$ which is constructed above induces a smooth isotopy connecting the induced Legendrian spheres with formal Legendrian structures. 
This prove the first half of Proposition \ref{prop diffeo}.

For the case of even $n \geq 4$, \cite{Maydanskiy-Seidel15} shows that the constructed isotopy between $\beta$ and $(\tau_\alpha)^4(\beta)$ induces a smooth isotopy connecting the induced Legendrian spheres with formal Legendrian structures.
In order to show this, the authors recalled that 
\begin{gather}
	\label{eqn homotopy group of stiefel}
	\pi_{n+1}(V_{n,2n+1},U_n) \simeq \mathbb{Z}/2.
\end{gather}

If we measure the formal Legendrian structures given on $\beta$ and $(\tau_\alpha)^2(\beta)$, the difference must lies in the group $\mathbb{Z}/2$ by Equation \eqref{eqn homotopy group of stiefel}. 
Therefore we might get a non-trivial element. 
However, when we consider $\beta$ and $\tau_\alpha^4(\beta)$, the difference is the trivial element in because we take the same operation twice to cancel out the difference inside $\pi_{n+1}(V_{n,2n+1},U_n)$.
This completes the proof. 
\end{proof}

We are now ready to prove Theorem \ref{thm main}.
\begin{theorem}
	\label{thm main}
	Let $\X$ and $\Y$ be symplectic manifolds constructed in Section \ref{section construction}.
	Then, for $n=2$ (resp.\ even $n \geq 4$)  the pair $(\X,\Y)$ (resp.\ $(\XX,\YY)$) satisfies the followings:
	\begin{itemize}
		\item $\X$ and $\Y$ (resp.\ $\XX$ and $\YY$) are diffeomorphic,
		\item $\X$ and $\Y$ are not exact deformation equivalent, and 
		\item $\cW(\X)$ and $\cW(\Y)$ are not vanishing. 
	\end{itemize}
\end{theorem}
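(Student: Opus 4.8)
The plan is to assemble Theorem \ref{thm main} by collecting the three bullet points, each of which has essentially already been established as a separate result earlier in the excerpt. The proof is therefore a matter of citing and combining Proposition \ref{prop diffeo}, Proposition \ref{prop not symplectomorphic}, and Lemmas \ref{lemma wrapped Fukaya of X} and \ref{lemma wrapped Fukaya of Y}. First I would fix the two cases: for $n=2$ the relevant pair is $(\X,\Y)$, while for even $n\geq 4$ the relevant pair is $(\XX,\YY)$ (i.e.\ with $2k$ in place of $k$). This indexing shift is exactly what Proposition \ref{prop diffeo} is set up to accommodate, so I would state the proof uniformly and only distinguish the parameter when invoking the diffeomorphism statement.

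For the first bullet, the diffeomorphism of $\X$ and $\Y$ (resp.\ $\XX$ and $\YY$) is precisely the content of Proposition \ref{prop diffeo}, so this step is a direct citation. For the second bullet, the failure of exact deformation equivalence is Proposition \ref{prop not symplectomorphic}; I would note that its proof shows the stronger fact that $\cW(\X)$ and $\cW(\Y)$ are inequivalent as (pretriangulated) categories, and that exact deformation equivalence would force such an equivalence, so the two manifolds cannot be exact deformation equivalent. One mild bookkeeping point here is that Proposition \ref{prop not symplectomorphic} is stated for $(\X,\Y)$ with parameter $k$; since $\XX$ and $\YY$ are obtained simply by substituting $2k$ for $k$ in Definition \ref{def X and Y}, the same argument applies verbatim with $2k$ in place of $k$, and I would point this out explicitly so the even $n\geq 4$ case is covered.

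For the third bullet, I would argue that neither wrapped Fukaya category vanishes. By Lemma \ref{lemma wrapped Fukaya of X} we have $\cW(\X)\simeq\cW(A^{2n+2}_{2k+1})$, and the proof of Proposition \ref{prop not symplectomorphic} (via Lemma \ref{lemma Ak wrapped Fukaya}) exhibits Lefschetz thimbles $T_\ell$ with $\Hom^0(T_\ell,T_\ell)=k\neq 0$, so $\cW(\X)$ contains a nonzero object and hence does not vanish. Likewise, Lemma \ref{lemma wrapped Fukaya of Y} gives $\cW(\Y)\simeq\cW(A^{2n+2}_{2k})\amalg\cW(T^*S^{n+1})$, and $\cW(T^*S^{n+1})$ is the nonzero wrapped Fukaya category of a cotangent bundle (it contains the nonzero cotangent fiber), so $\cW(\Y)$ is nonvanishing as well. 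I do not anticipate a genuine obstacle in this theorem, since all the substantive work lies in the cited results; the only thing requiring a little care is verifying that the parameter substitution $k\mapsto 2k$ in the even $n\geq 4$ case leaves the categorical arguments of Lemmas \ref{lemma wrapped Fukaya of X}, \ref{lemma wrapped Fukaya of Y} and Proposition \ref{prop not symplectomorphic} intact, which it does because those arguments are uniform in the parameter.
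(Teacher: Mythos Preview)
Your proposal is correct and follows essentially the same approach as the paper: the paper's proof simply cites Proposition \ref{prop not symplectomorphic} and Proposition \ref{prop diffeo} and declares the theorem proven. Your version is slightly more explicit in that you separately justify the third bullet (nonvanishing of $\cW(\X)$ and $\cW(\Y)$) via Lemmas \ref{lemma wrapped Fukaya of X}, \ref{lemma wrapped Fukaya of Y}, and the thimble computations, whereas the paper leaves this implicit in the earlier results; this extra care is harmless and arguably an improvement.
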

\begin{proof}
	Proposition \ref{prop not symplectomorphic} and Proposition \ref{prop diffeo} prove Theorem \ref{thm main}. 
\end{proof}

\begin{remark}
	\label{rmk notation}
	Before going further, we remark that in Section \ref{section exotic familes of plumbings with names}, we extend the exotic pair $(\XX, \YY)$ to diffeomorphic families of different Weinstein manifolds. 
	However, in the next section, we use different notation $P_{n+1}(T_{4k}^1)$ (resp.\ $Q_{2k+1}^{2n+2}$) in order to denote $\XX$ (resp.\ $\YY$).
	Since Proposition \ref{prop diffeo} is a part of Lemmas \ref{lemma diffeomorphic families 1} and \ref{lemma diffeomorphic families 2}, it would seem more reasonable to use $P_{n+1}(T_{4k}^1)$ and $Q_{2k+1}^{2n+2}$, instead of $\XX$ and $\YY$.
	Moreover, instead of $\XX$, we can directly say that it is the Milnor fiber of $A_{4k+1}$-singularity, as proven in Lemma \ref{lemma strucutre of X}. 
	In this remark, we would like to clarify the reason of using the notations $\XX$ and $\YY$.
	
	We would like to point out that $\X$ and $\Y$ are different as Weinstein manifolds since $\Y$ has a decomposable Fukaya category, but $\X$ does not. 
	Similarly, if another Weinstein manifold $Z$ has non-decomposable Fukaya category, then $Z$ and $\Y$ are different. 
	Based on this, for example, we can expect that the arguments in Sections \ref{section construction} -- \ref{section diffeo} distinguish $\Y$ and the Milnor fiber of $D_{2k+1}$-type as Weinstein manifolds even if they are diffeomorphic to each other. 
	Actually, this corresponds to the fourth families in Theorems \ref{thm exotic families} and \ref{thm exotic families 2}.
	Moreover, we can consider to replace $\X$ with another plumbing space in the same way. 

	By using a letter $X$ which is the conventional letter for a variable, we would like to implicitly mention that one can choose different space for the position of $\X$ in the exotic pair, i.e., $\Y$.
\end{remark}
 
\section{The first generalization}
\label{section a simple extension}
In Sections \ref{section construction} -- \ref{section diffeo}, we constructed exotic pairs of Weinstein manifolds. 
Section \ref{section a simple extension} extends the construction to families of diffeomorphic, but different Weinstein manifolds. 

In order to prepare the extension, we introduce notation first. 
Let $A_m^{2n}$ be the $A_m$ type plumbing of $T^*S^n$ as defined in Definition \ref{def A plumbig}.
Then, there are Lagrangian spheres which are zero sections of $T^*S^n$. 
Let {\em $\alpha_1, \cdots, \alpha_m$} denote the Lagrangian spheres so that 
$\alpha_i$ and $\alpha_{i+1}$ intersect at one point. 
Since $A_m^{2n}$ is obtained by plumbing a cotangent bundle of $S^n$ to $A_{m-1}^{2n}$, the following relations make sense by abuses of notation. 
\[\alpha_1, \cdots, \alpha_{m-1} \subset A_{m-1}^{2n} \subset A_m^{2n}.\]

For a Lagrangian sphere $S$, there is a Dehn twist along $S$. 
Let $\tau_S$ denote the Dehn twist along $S$. 

With the above notation, one can define Definition \ref{def Z}.
\begin{definition}
	\label{def Z}
	For natural numbers $i_1, \cdots, i_k \in \mathbb{N}$, let {\em $\Z_{(i_1;i_2;\cdots;i_k)}$} denote the total space of the following abstract Lefschetz fibration:
	\begin{gather}
		\label{eqn Z}
		(A_k^{2n}; \alpha_1, \cdots, \alpha_1, \alpha_2, \cdots, \alpha_2, \alpha_3, \cdots, \alpha_{k-1}, \alpha_k, \cdots, \alpha_k),
	\end{gather}
	where the number of $\alpha_j$ in \eqref{eqn Z} is $i_j+1$.
\end{definition}

It is easy to check that $A_m^{2n+2}$ is equivalent to $\Z_{(m)}$ as a Weinstein manifold by the existence of a Lefschetz fibration $\rho$ given below of Definition \ref{def A plumbig}.
Similarly, $Y_m^{2n+2}$ is equivalent to $\Z_{(2m;1)}$. 

\begin{lemma}
	\label{lemma diffeomorphic family of Z}
	\mbox{}
	\begin{enumerate}
		\item If $n=2$, then two Weinstein manifolds 
		\[\Z_{(i_1;i_2;\cdots;i_{k-2};2i_{k-1};i_k)} \text{  and  } \Z_{(i_1;i_2;\cdots;i_{k-2};2i_{k-1}+i_k)}\]
		are diffeomorphic to each other.
		\item If $n \geq 4$ is even, then two Weinstein manifolds \[\Z_{(i_1;i_2;\cdots;i_{k-2};4i_{k-1};i_k)} \text{  and  } \Z_{(i_1;i_2;\cdots;i_{k-2};4i_{k-1}+i_k)}\]
		are diffeomorphic to each other.
	\end{enumerate}
\end{lemma}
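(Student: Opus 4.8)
The plan is to reduce the claim to the two-vertex comparison already carried out in Proposition~\ref{prop diffeo}, applied to the last two vertices $\alpha_{k-1}$ and $\alpha_k$ of the chain while every other vanishing cycle is kept fixed. The one feature that is genuinely new, compared with Proposition~\ref{prop diffeo}, is that the two manifolds in the statement have different central fibers, namely $A_k^{2n}$ and $A_{k-1}^{2n}$, so they cannot be compared directly as critical-handle attachments to a common subcritical part; I would remove this discrepancy by a single stabilization. The two cases run in parallel, the only difference being the number of Dehn twists one must insert, which is governed by the group appearing in \eqref{eqn homotopy group of stiefel}.

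First I would put the two manifolds in a common abstract Lefschetz presentation with central fiber $A_k^{2n}$. The manifold $\Z_{(i_1;\cdots;i_{k-2};2i_{k-1};i_k)}$ already has this central fiber. For $\Z_{(i_1;\cdots;i_{k-2};2i_{k-1}+i_k)}$, whose central fiber is $A_{k-1}^{2n}$, I would stabilize once, enlarging the central fiber to $A_k^{2n}$ and introducing $\alpha_k$ as the new vanishing cycle, and then run the same Hurwitz moves and cyclic permutations as in the proof of Lemma~\ref{lemma strucutre of X}, now using the relation $(\tau_{\alpha_k}^{-1}\circ\tau_{\alpha_{k-1}}^{-1})(\alpha_k)\simeq\alpha_{k-1}$ furnished by the fibration $\rho$ on the sub-plumbing $T^*\alpha_{k-1}\cup T^*\alpha_k\cong A_2^{2n}$. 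Keeping the prefix $\alpha_1,\ldots,\alpha_{k-2}$ unchanged and iterating the reduction $i_k$ times, this exhibits $\Z_{(i_1;\cdots;i_{k-2};2i_{k-1}+i_k)}$, up to Weinstein homotopy and hence up to diffeomorphism, as the total space of
\[\big(A_k^{2n};\ \alpha_1,\ldots,\alpha_{k-2},\ \underbrace{\alpha_{k-1},\ldots,\alpha_{k-1}}_{2i_{k-1}+1},\ (\tau_{\alpha_{k-1}})^{2i_{k-1}}(\alpha_k),\ \underbrace{\alpha_k,\ldots,\alpha_k}_{i_k}\big),\]
with the prefix groups repeated the appropriate number of times; this is the direct generalization of the identity $\X\simeq A_{2k+1}^{2n+2}$ of Lemma~\ref{lemma strucutre of X}. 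In case (2) one uses $4i_{k-1}$ copies of $\alpha_{k-1}$ and the twist $(\tau_{\alpha_{k-1}})^{4i_{k-1}}$. Since every move of Section~\ref{subsection equivalent abstract Lefschetz fibrrations} preserves the Weinstein homotopy type, it in particular preserves the smooth type.

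With both manifolds now presented as critical handles attached to $A_k^{2n}\times\mathbb{D}^2$, their attaching Legendrians coincide, after a cyclic reordering, in every slot but one: the handle over the lift of $\alpha_k$ for $\Z_{(i_1;\cdots;i_{k-2};2i_{k-1};i_k)}$ against the handle over the lift of $(\tau_{\alpha_{k-1}})^{2i_{k-1}}(\alpha_k)$. I would then transcribe Proposition~\ref{prop diffeo}: lifting the zero-section vanishing cycles of $\rho$ along a base isotopy from $\rho(\alpha_k)$ to $\rho\big((\tau_{\alpha_{k-1}})^{2}(\alpha_k)\big)$ — possible exactly because $n$ is even, so $S^{n-1}$ admits a nonvanishing vector field — and iterating it yields a smooth Lagrangian isotopy from $\alpha_k$ to $(\tau_{\alpha_{k-1}})^{2i_{k-1}}(\alpha_k)$ supported near $T^*\alpha_{k-1}\cup T^*\alpha_k$, hence disjoint from the other vanishing cycles. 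This induces a smooth isotopy of the two attaching Legendrian spheres, and the discrepancy between their formal Legendrian structures lies in $\pi_{n+1}(V_{n,2n+1},U_n)$. For $n=2$ this group is trivial, so the twist is harmless and $(\tau_{\alpha_{k-1}})^{2i_{k-1}}$ suffices, giving (1); for even $n\geq 4$ it is $\mathbb{Z}/2$ by \eqref{eqn homotopy group of stiefel} and each $(\tau_{\alpha_{k-1}})^{2}$ contributes the nontrivial class, so one needs the even total $(\tau_{\alpha_{k-1}})^{4i_{k-1}}$ for these classes to cancel, giving (2). Matching attaching Legendrians together with matching formal structures produces identical handle framings, hence a diffeomorphism.

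The step I expect to be the main obstacle is the first one: arranging the stabilization and the string of Hurwitz moves so that they reproduce exactly the twisted cycle $(\tau_{\alpha_{k-1}})^{2i_{k-1}}(\alpha_k)$ while leaving the prefix and the $i_k$ remaining copies of $\alpha_k$ untouched, and verifying that the elementary isotopy above stays localized to the sub-chain $\{\alpha_{k-1},\alpha_k\}$ throughout. Everything downstream of this is a faithful transcription of the argument of Proposition~\ref{prop diffeo}.
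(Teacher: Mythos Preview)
Your approach is essentially the paper's: stabilize $\Z_{(i_1;\cdots;2i_{k-1}+i_k)}$ to make the central fiber $A_k^{2n}$, reorganize vanishing cycles by Hurwitz moves and cyclic permutations, and reduce to a comparison of two abstract Lefschetz fibrations over $A_k^{2n}$ that differ in a single slot by an even power of $\tau_{\alpha_{k-1}}$, to which Proposition~\ref{prop diffeo} applies. So the strategy is correct and matches the paper's.

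There is one slip in your displayed intermediate form. Running the moves of Lemma~\ref{lemma strucutre of X} in the presence of the prefix does \emph{not} produce $(\tau_{\alpha_{k-1}})^{2i_{k-1}}(\alpha_k)$ between the $\alpha_{k-1}$--block and the $\alpha_k$--block; the paper's computation (the passage from \eqref{eqn middle1} to \eqref{eqn middle2}) shows that what sits there is $\tau_{\alpha_{k-1}}(\alpha_k)$, i.e.\ a \emph{single} twist. This matters, because a single twist is odd and Proposition~\ref{prop diffeo} only compares $\alpha_k$ with an even (resp.\ $4$--divisible) power of $\tau_{\alpha_{k-1}}$ applied to $\alpha_k$. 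The paper therefore performs one more manoeuvre you have implicitly skipped: it moves $\tau_{\alpha_{k-1}}(\alpha_k)$ leftwards through the $2i_{k-1}+1$ copies of $\alpha_{k-1}$, which turns it into $\tau_{\alpha_{k-1}}^{-2i_{k-1}}(\alpha_k)$ (eqn~\eqref{eqn middle3}), and only then invokes Proposition~\ref{prop diffeo}. After the replacement by $\alpha_k$ one needs the short extra step of carrying $\alpha_k$ past the prefix $\alpha_1,\ldots,\alpha_{k-2}$ (harmless since $\alpha_k$ is disjoint from these) and cyclically permuting to the end, to land on $\Z_{(i_1;\cdots;2i_{k-1};i_k)}$. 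Your ``iterating the reduction $i_k$ times'' does not quite do this: the cyclic permutation inside Lemma~\ref{lemma strucutre of X} interacts with the prefix and with the already-produced $\alpha_k$'s, so the exponent does not accumulate to $2i_{k-1}$ in the slot you wrote. Once you insert this one extra Hurwitz displacement (and the final commutation past the prefix), your argument coincides with the paper's.
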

\begin{proof}
	We prove the second case of Lemma \ref{lemma diffeomorphic family of Z}.
	The first case can be proven in the same way. 
	
	By definition, $\Z_{(i_1;i_2;\cdots;4i_{k-1}+i_k)}$ is the total space of 
	\[(A_{k-1}^{2n}; \alpha_1, \cdots, \alpha_1, \alpha_2, \cdots, \alpha_2, \alpha_3, \cdots, \alpha_{k-2}, \alpha_{k-1}, \cdots, \alpha_{k-1}).\]
	Then, by the stabilization given in Section \ref{subsection equivalent abstract Lefschetz fibrrations}, one can modify the above and can obtain 
	\[(A_k^{2n}; \alpha_1, \cdots, \alpha_1, \alpha_2, \cdots, \alpha_2, \alpha_3, \cdots, \alpha_{k-2}, \alpha_{k-1}, \cdots, \alpha_{k-1}, \alpha_k, \alpha_{k-1}).\]
	By a Hurwitz move, one can move $\alpha_k$ to the right, i.e., one has 
	\begin{gather}
		\label{eqn middle1}
	(A_k^{2n}; \alpha_1, \cdots, \alpha_1, \alpha_2, \cdots, \alpha_2, \alpha_3, \cdots, \alpha_{k-2}, \alpha_{k-1}, \cdots, \alpha_{k-1}, \tau_{\alpha_{k-1}}(\alpha_k)).
	\end{gather}
	We note that in \eqref{eqn middle1}, the number of $\alpha_{k-1}$ is $(4i_{k-1} + i_k +1)$. 
	
	One can move the last $\alpha_{k-1}$ to the right by a Hurewitz move. 
	Then, $\alpha_{k-1}$ becomes 
	\[\tau_{\tau_{\alpha_{k-1}(\alpha_k)}}(\alpha_{k-1}) = \alpha_k.\]
	One can easily check the equality by using the Lefschetz fibration $\rho$ of $A_k^{2n}$, which is given right below of Definition \ref{def A plumbig}.
	Then, as a result of the Hurewitz move, one obtains 
	\[(A_k^{2n}; \alpha_1, \cdots, \alpha_1, \alpha_2, \cdots, \alpha_2, \alpha_3, \cdots, \alpha_{k-2}, \alpha_{k-1}, \cdots, \alpha_{k-1}, \tau_{\alpha_{k-1}}(\alpha_k), \alpha_k).\]

	Similarly, one can operate the similar Hurwitz move $(i_k -1)$ times.
	Then, it gives 
	\begin{gather}
		\label{eqn middle2}
		(A_k^{2n}; \alpha_1, \cdots, \alpha_1, \alpha_2, \cdots, \alpha_2, \alpha_3, \cdots, \alpha_{k-2}, \alpha_{k-1}, \cdots, \alpha_{k-1}, \tau_{\alpha_{k-1}}(\alpha_k), \alpha_k, \cdots, \alpha_k).
	\end{gather} 
	We note that in \eqref{eqn middle2}, the numbers of $\alpha_{k-1}$ and $\alpha_k$ are $4 i_{k-1} + 1$ and $i_k$, respectively. 
	
	After that, one can move $\tau_{\alpha_{k-1}}(\alpha_k)$ to the left by operating Hurwitz moves $(4i_{k-1} +1)$ times. 
	Then, one obtains 
	\begin{gather}
		\label{eqn middle3}
		(A_k^{2n}; \alpha_1, \cdots, \alpha_1, \alpha_2, \cdots, \alpha_2, \alpha_3, \cdots, \alpha_{k-2}, \tau_{\alpha_{k-1}}^{-4i_{k-1}}(\alpha_k), \alpha_{k-1}, \cdots, \alpha_{k-1}, \alpha_k, \cdots, \alpha_k).
	\end{gather}
	
	By the argument in the proof of Proposition \ref{prop diffeo}, we know that the replacement of $\tau_{\alpha_{k-1}}^{-4i_{k-1}}(\alpha_k)$ by $\alpha_k$ in \eqref{eqn middle3} gives a diffeomorphic Weinstein manifold.
	In other words, the total space of
	\begin{gather}
		\label{eqn middle4}
		(A_k^{2n}; \alpha_1, \cdots, \alpha_1, \alpha_2, \cdots, \alpha_2, \alpha_3, \cdots, \alpha_{k-2}, \alpha_k, \alpha_{k-1}, \cdots, \alpha_{k-1}, \alpha_k, \cdots, \alpha_k).
	\end{gather} 
	is diffeomorphic to the original Weinstein manifold $\Z_{(i_1;\cdots;i_{k-2};4i_{k-1}+i_k)}$.
	
	Finally, one can move the $\alpha_k$ which is located in the middle of \eqref{eqn middle4} to the first place by Hurwitz moves. 
	Since $\tau_i(\alpha_k) = \alpha_k$ when $i \leq k-2$, one has  
	\[(A_k^{2n}; \alpha_k, \alpha_1, \cdots, \alpha_1, \alpha_2, \cdots, \alpha_2, \alpha_3, \cdots, \alpha_{k-2}, \alpha_{k-1}, \cdots, \alpha_{k-1}, \alpha_k, \cdots, \alpha_k).\]
	Also, by moving the first $\alpha_k$ to the last by a cyclic permutation given in Section \ref{subsection equivalent abstract Lefschetz fibrrations}, one obtains $\Z_{(i_1;i_2;\cdots;i_{k-2};4i_{k-1};i_k)}$.
	This completes the proof.
\end{proof}

	Lemma \ref{lemma diffeomorphic family of Z} induces Theorem \ref{thm exotic families of Z}.
	
	\begin{theorem}
		\label{thm exotic families of Z}
		\mbox{}
		\begin{enumerate}
			\item If $n=2$, the following family 
			\begin{gather}
				\label{eqn exotic family of Z1}
				\{\Z_{(2i_1+ \cdots 2i_{k-1} + i_k)}, \Z_{(2i_1;2i_2+\cdots 2i_{k-1} + 	i_k)}, \cdots, \Z_{(2i_1;2i_2;\cdots;2i_{k-2};2i_{k-1} + i_k)}, \Z_{(2i_1;2i_2;\cdots;2i_{k-2};2i_{k-1};i_k)}\}
			\end{gather}
			are diffeomorphic families of $k$ pairwise different Weinstein manifolds.
			\item If $n \geq 4$ is even, the following family 
			\begin{gather}
				\label{eqn exotic family of Z2}
				\{\Z_{(4i_1+ \cdots 4i_{k-1} + i_k)}, \Z_{(4i_1;4i_2+\cdots 4i_{k-1} + 	i_k)}, \cdots, \Z_{(4i_1;4i_2;\cdots;4i_{k-2};4i_{k-1} + i_k)}, \Z_{(4i_1;4i_2;\cdots;4i_{k-2};4i_{k-1};i_k)}\}
			\end{gather}
			are diffeomorphic families of $k$ pairwise different Weinstein manifolds.
		\end{enumerate}
	\end{theorem}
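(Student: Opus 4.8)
The plan is to prove the two assertions of Theorem \ref{thm exotic families of Z} separately: the common diffeomorphism type comes from iterating Lemma \ref{lemma diffeomorphic family of Z}, while the pairwise inequivalence as Weinstein manifolds is detected by counting the connected components of the wrapped Fukaya categories. I will write everything for $n=2$; the case of even $n\geq 4$ is identical after replacing each $2i_j$ by $4i_j$ and invoking part (2) of Lemma \ref{lemma diffeomorphic family of Z} in place of part (1).

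For the diffeomorphism claim I would note that two consecutive members of the family \eqref{eqn exotic family of Z1} differ by exactly one application of Lemma \ref{lemma diffeomorphic family of Z}. Passing from the $(j+1)$-st member $\Z_{(2i_1;\cdots;2i_{j-1};2i_j;\,2i_{j+1}+\cdots+2i_{k-1}+i_k)}$ to the $j$-th member $\Z_{(2i_1;\cdots;2i_{j-1};\,2i_j+2i_{j+1}+\cdots+2i_{k-1}+i_k)}$ merges the last two labels, and the second-to-last of these labels is the even number $2i_j$. This is precisely the hypothesis of Lemma \ref{lemma diffeomorphic family of Z}(1), with the role of $i_{k-1}$ played by $i_j$ and the role of $i_k$ played by the aggregate $2i_{j+1}+\cdots+i_k$. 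Hence every consecutive pair is diffeomorphic, and by transitivity the entire family shares one diffeomorphism type.

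For the inequivalence I would first upgrade Lemma \ref{lemma wrapped Fukaya of Y} to the statement that for arbitrary $i_1,\dots,i_\ell\in\mathbb{N}$
\[\cW(\Z_{(i_1;\cdots;i_\ell)})\simeq\cW(A_{i_1}^{2n+2})\amalg\cdots\amalg\cW(A_{i_\ell}^{2n+2}),\]
of which Lemma \ref{lemma wrapped Fukaya of Y} is the case $\ell=2$, $i_1=2k$, $i_2=1$. I would prove this by repeating the Weinstein sectorial covering construction of Section \ref{subsection proof of lemma}: subdivide the disk base into regions $S_1,\dots,S_\ell$, one under each block of equal vanishing cycles, separated by thin tunnels, so that the associated $\ell$-piece covering has all overlaps equivalent to $\mathbb{D}^{2n}\times T^*[0,1]$, whose wrapped category vanishes. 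The convex completion of the $t$-th piece realizes the abstract Lefschetz fibration $(T^*S^n;\alpha_t,\dots,\alpha_t)$ with $i_t+1$ copies, which is $A_{i_t}^{2n+2}$, exactly as before; feeding the covering into the descent theorem of \cite{Ganatra-Pardon-Shende2} then collapses the homotopy colimit to the displayed coproduct. Granting this, the $j$-th member of \eqref{eqn exotic family of Z1} carries $j$ labels, so its wrapped category is a coproduct of exactly $j$ nonzero pieces $\cW(A_{i_t}^{2n+2})$ (each $i_t\geq 1$). Each such piece is indecomposable by the argument already used to prove Proposition \ref{prop not symplectomorphic}: Lemma \ref{lemma decomposable} applies to the generating set of Lefschetz thimbles of the plumbing using the nonvanishing of $\Hom^*(T_i,T_j)$ from Lemma \ref{lemma Ak wrapped Fukaya} together with $\Hom^0(T_\ell,T_\ell)=k$, and this computation holds verbatim for every $A_{i_t}^{2n+2}$. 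Therefore the number of indecomposable components of the wrapped category of the $j$-th member equals $j$; as this number is well-defined and invariant under pretriangulated equivalence, and as the wrapped category is an invariant of the exact deformation class, the $k$ members have $1,2,\dots,k$ components and so are pairwise inequivalent as Weinstein manifolds.

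The hard part will be the decomposition step, namely the faithful generalization of Lemma \ref{lemma wrapped Fukaya of Y} from two blocks to $\ell$ blocks. The new subtlety is that the central fiber is now $A_\ell^{2n}$ rather than $A_2^{2n}$, so the handles $h_{\alpha_t}$ sit in a chain in which a middle cycle $\alpha_t$ meets both of its neighbours; one must check that after the modification of the Liouville structure the attaching region of each handle $H_{\alpha_t}$ can be confined to lie over its own region $S_t$, and that the overlaps between adjacent pieces remain clean $T^*[0,1]$-sectors despite the chain plumbing. Once the covering is correctly arranged, verifying that all overlaps contribute trivially and that the descent diagram of \cite{Ganatra-Pardon-Shende2} degenerates to an honest coproduct is routine.
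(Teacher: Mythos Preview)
Your proposal is correct and follows essentially the same route as the paper: diffeomorphism by iterating Lemma \ref{lemma diffeomorphic family of Z}, and pairwise inequivalence by counting indecomposable components of the wrapped Fukaya category, using the indecomposability of each $\cW(A_{i_t}^{2n+2})$ established in the proof of Proposition \ref{prop not symplectomorphic}. The only difference is that where you propose to redo the sectorial covering argument of Section \ref{subsection proof of lemma} for $\ell$ blocks (and flag the chain-plumbing of the central fiber $A_\ell^{2n}$ as the technical point), the paper instead invokes Remark \ref{rmk facts}(3) to identify $\Z_{(i_1;\cdots;i_\ell)}$ directly as the end connected sum $A_{i_1}^{2n+2}\#_e\cdots\#_e A_{i_\ell}^{2n+2}$, from which the coproduct decomposition of $\cW$ is immediate; this sidesteps the wrinkle you raise, since the index $1$ handles of the end connected sum already give the required covering with subcritical overlaps.
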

	\begin{proof}
		By Lemma \ref{lemma diffeomorphic family of Z} induces that the families in \eqref{eqn exotic family of Z1} and \eqref{eqn exotic family of Z2} are diffeomorphic families. 
		Thus, it is enough to show that the members of the families are pairwise different as Weinstein manifolds. 
		
		We note that by definition, $\Z_{(i_1;i_2;\cdots;i_k)}$ is a Weinstein manifold obtained by taking the end connected sum of $A_{i_1}^{2n+2}, \cdots, A_{i_k}^{2n+2}$, see the third item in Remark \ref{rmk facts}. 
		This induces that the wrapped Fukaya category of $\Z_{(i_1;i_2;\cdots;i_k)}$ can be written as a coproduct of $k$ nontrivial categories. 
		
		In the proof of Proposition \ref{prop not symplectomorphic}, we already checked that the wrapped Fukaya category of $A_k^{2n+2}$ could not be written as a coproduct of two or more nontrivial categories.
		Thus, the above argument implies that the members in \eqref{eqn exotic family of Z1} or \eqref{eqn exotic family of Z2} are pairwise different as Weinstien manifolds.
	\end{proof}
 
\section{The second generalization}
\label{section exotic familes of plumbings with names}
In Sections \ref{section construction} -- \ref{section diffeo}, we consider an exotic pair of Weinstein manifolds $(\X, \Y)$ (resp.\ $(\XX,\YY)$) for $k \geq 1$ and for $n = 2$ (resp.\ even $n \geq 4$). 
We distinguish them as Weinstein manifolds by using their wrapped Fukaya categories. 

In the current section, we extend the pairs to diffeomorphic families of different Weinstein manifolds.
The diffeomorphic families are listed at the end of Section \ref{subsection diffeomorphic families}.
We prove that the given families are diffeomorphic by using Lefschetz fibrations as similar to Section \ref{section diffeo}. 
However, we prove that they are different as Weinstein manifolds by using their symplectic cohomologies, which is a different method from the previous case.

\subsection{Construction of diffeomorphic families}
\label{subsection diffeomorphic families}
In Section \ref{subsection diffeomorphic families}, we give diffeomorphic families of Weinstein manifolds.
In order to do this, we need Definition \ref{def T_m^j}.

\begin{definition}
	\mbox{}
	\label{def T_m^j}
	\begin{enumerate}
		\item 	For any $m \in \mathbb{N}$, and any $ 1 \leq j \leq m$, let {\em $T_m^j$} denote the tree which is given in Figure \ref{figure T_m^j}. 
		\begin{figure}[h]
			\centering
\begingroup%
  \makeatletter%
  \providecommand\color[2][]{%
    \errmessage{(Inkscape) Color is used for the text in Inkscape, but the package 'color.sty' is not loaded}%
    \renewcommand\color[2][]{}%
  }%
  \providecommand\transparent[1]{%
    \errmessage{(Inkscape) Transparency is used (non-zero) for the text in Inkscape, but the package 'transparent.sty' is not loaded}%
    \renewcommand\transparent[1]{}%
  }%
  \providecommand\rotatebox[2]{#2}%
  \newcommand*\fsize{\dimexpr\f@size pt\relax}%
  \newcommand*\lineheight[1]{\fontsize{\fsize}{#1\fsize}\selectfont}%
  \ifx\svgwidth\undefined%
    \setlength{\unitlength}{226.77165354bp}%
    \ifx\svgscale\undefined%
      \relax%
    \else%
      \setlength{\unitlength}{\unitlength * \real{\svgscale}}%
    \fi%
  \else%
    \setlength{\unitlength}{\svgwidth}%
  \fi%
  \global\let\svgwidth\undefined%
  \global\let\svgscale\undefined%
  \makeatother%
  \begin{picture}(1,0.225)%
    \lineheight{1}%
    \setlength\tabcolsep{0pt}%
    \put(0,0){\includegraphics[width=\unitlength,page=1]{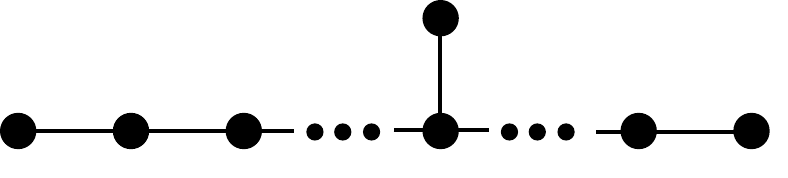}}%
    \put(0.01285024,0.00623575){\makebox(0,0)[lt]{\lineheight{1.25}\smash{\begin{tabular}[t]{l}$v_1$\end{tabular}}}}%
    \put(0.1554882,0.00623723){\makebox(0,0)[lt]{\lineheight{1.25}\smash{\begin{tabular}[t]{l}$v_2$\end{tabular}}}}%
    \put(0.29928831,0.00623723){\makebox(0,0)[lt]{\lineheight{1.25}\smash{\begin{tabular}[t]{l}$v_3$\end{tabular}}}}%
    \put(0.54762176,0.00623723){\makebox(0,0)[lt]{\lineheight{1.25}\smash{\begin{tabular}[t]{l}$v_j$\end{tabular}}}}%
    \put(0.78614463,0.00623723){\makebox(0,0)[lt]{\lineheight{1.25}\smash{\begin{tabular}[t]{l}$v_{m-1}$\end{tabular}}}}%
    \put(0.94503508,0.00623723){\makebox(0,0)[lt]{\lineheight{1.25}\smash{\begin{tabular}[t]{l}$v_m$\end{tabular}}}}%
    \put(0.58697062,0.18741217){\makebox(0,0)[lt]{\lineheight{1.25}\smash{\begin{tabular}[t]{l}$v_{m+1}$\end{tabular}}}}%
  \end{picture}%
\endgroup%
		
			\caption{Tree $T_m^j$.} 
			\label{figure T_m^j}
		\end{figure}
		\item For any tree $T$, let {\em $P_n(T)$} denote the Weinstein manifold obtained by plumbing $T^*S^n$ along the plumbing pattern $T$.  
		\item For $m \in \mathbb{N}$, let $Q_m^{2n}$ be the total space of an abstract Lefschetz fibration
		\begin{gather}
			\label{eqn Q_m^n}
			(A^{2n-2}_2; \alpha, \alpha, \cdots, \alpha, \beta, \beta),
		\end{gather}
		where the number of $\alpha$ in \eqref{eqn Q_m^n} is $m$. 
		We note that the notation $\alpha$ and $\beta$ are used in \eqref{eqn X} and \eqref{eqn Y}.
	\end{enumerate}
\end{definition}
\begin{remark}
	\label{rmk comparison of Q and Y}
	We note that $Q_{2k+1}^{2n+2} = \Y$.
\end{remark}

\begin{lemma}
	\label{lemma diffeomorphic families 1}
	If $n = 2$, then, $P_{n+1}(T_m^j)$ and $P_{n+1}(T_m^{j+2})$ are diffeomorphic to each other. 
	If $n \geq 4$ is even, then $P_{n+1}(T_m^j)$ and $P_{n+1}(T_m^{j+4})$ are diffeomorphic to each other. 
\end{lemma}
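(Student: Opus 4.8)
The plan is to realize both $P_{n+1}(T_m^j)$ and $P_{n+1}(T_m^{j+2})$ (resp.\ $P_{n+1}(T_m^{j+4})$) as total spaces of abstract Lefschetz fibrations that differ only by a power of a Dehn twist applied to a single vanishing cycle, and then to invoke the formal Legendrian argument of Proposition \ref{prop diffeo}. First I would describe $P_{n+1}(T_m^j)$ by a Lefschetz fibration whose central fiber is $A_2^{2n}$, with zero sections $\alpha,\beta$ as in Section \ref{subsection construction}, and whose vanishing cycles form a string of copies of $\alpha$ with a single $\beta$ inserted; the position of $\beta$ among the $\alpha$'s records the vertex $v_j$ at which the branch $v_{m+1}$ is attached. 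This is the same dictionary used in Definition \ref{def Z} and Remark \ref{rmk facts}: repeated copies of $\alpha$ build the $A_m$-chain $v_1-\cdots-v_m$, and the single $\beta$, which meets $\alpha$ once in $A_2^{2n}$, attaches the extra sphere $v_{m+1}$ at the location dictated by its position in the ordered collection.

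Next I would move the inserted $\beta$ along the collection by Hurwitz moves, exactly as in the proof of Lemma \ref{lemma diffeomorphic family of Z}. Pushing $\beta$ past one neighboring $\alpha$ replaces the pair $(\beta,\alpha)$ by $(\alpha,\tau_\alpha(\beta))$, so that after moving $\beta$ past two consecutive $\alpha$'s its position shifts by two and it becomes $\tau_\alpha^{2}(\beta)$; iterating, moving it past four copies of $\alpha$ produces $\tau_\alpha^{4}(\beta)$. Since Hurwitz moves (together with cyclic permutations) leave the total space Weinstein homotopic, and in particular diffeomorphic, this yields
\[P_{n+1}(T_m^{j}) \simeq \left(A_2^{2n};\ \alpha,\cdots,\alpha,\ \tau_\alpha^{2}(\beta),\ \alpha,\cdots,\alpha\right),\]
whose ordered collection agrees with that of $P_{n+1}(T_m^{j+2})$ except that the single inserted cycle is $\tau_\alpha^{2}(\beta)$ rather than $\beta$ (and likewise with $\tau_\alpha^{4}(\beta)$ for the shift by four). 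The constraint $j+2\le m$ (resp.\ $j+4\le m$) built into Definition \ref{def T_m^j} guarantees there are enough copies of $\alpha$ on the relevant side to carry out these moves.

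Finally I would appeal to Proposition \ref{prop diffeo}. Its proof constructs, for every even $n$, a smooth Lagrangian isotopy inside $A_2^{2n}$ between $\beta$ and $\tau_\alpha^{2}(\beta)$, and shows that the induced Legendrian attaching spheres carry smoothly isotopic formal Legendrian structures: for $n=2$ because $\pi_3(V_{2,5},U_2)$ is trivial, and for even $n\ge 4$ because the difference lives in $\pi_{n+1}(V_{n,2n+1},U_n)\simeq\mathbb{Z}/2$ and is killed after applying $\tau_\alpha^{2}$ twice, i.e.\ by $\tau_\alpha^{4}$. As the formal Legendrian data determine the framing of the critical handle, and hence the smooth attachment, replacing $\tau_\alpha^{2}(\beta)$ by $\beta$ for $n=2$ (resp.\ $\tau_\alpha^{4}(\beta)$ by $\beta$ for even $n\ge4$) preserves the diffeomorphism type of the total space. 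Combined with the previous step this gives $P_{n+1}(T_m^{j})$ diffeomorphic to $P_{n+1}(T_m^{j+2})$ for $n=2$ and to $P_{n+1}(T_m^{j+4})$ for even $n\ge4$.

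I expect the main obstacle to be the first step: pinning down the precise Lefschetz fibration model of $P_{n+1}(T_m^j)$ and verifying, through the explicit bookkeeping of Hurwitz moves, that shifting the branch vertex by two (resp.\ four) corresponds exactly to the twist $\tau_\alpha^{2}$ (resp.\ $\tau_\alpha^{4}$) rather than to some other power or to an additional modification of the fiber. Getting the counting of $\alpha$'s and the placement of $\beta$ right, and checking the boundary cases where the branch sits near an end of the path, is the delicate part; once the model is fixed, the argument is a direct transcription of Lemma \ref{lemma diffeomorphic family of Z} followed by Proposition \ref{prop diffeo}.
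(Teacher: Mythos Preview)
Your overall strategy---realize $P_{n+1}(T_m^j)$ as an abstract Lefschetz fibration with fiber $A_2^{2n}$, shift the branch vanishing cycle by Hurwitz moves so that it acquires a power of $\tau_\alpha$, then invoke Proposition \ref{prop diffeo}---is exactly the paper's approach. The gap is in the model itself. With a \emph{single} $\beta$ inserted among copies of $\alpha$, the position of $\beta$ carries no information: by cyclic permutation alone one brings $(A_2^{2n};\alpha^a,\beta,\alpha^b)$ to $(A_2^{2n};\alpha^{a+b},\beta)$, which destabilizes to $(A_1^{2n};\alpha^{a+b})\simeq A_{a+b-1}^{2n+2}$. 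Thus your proposed model never leaves type $A$ and cannot represent, for instance, $P_{n+1}(T_3^2)$, which is the $D_4$ Milnor fiber.

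The paper's model, obtained via \cite[Theorem 11.3]{Lee}, has \emph{two} copies of $\beta$:
\[P_{n+1}(T_m^j)\ \simeq\ \bigl(A_2^{2n};\ \underbrace{\alpha,\ldots,\alpha}_{j},\ \beta,\ \underbrace{\alpha,\ldots,\alpha}_{m+1-j},\ \beta\bigr),\]
with $m+1$ copies of $\alpha$ in total. The terminal $\beta$ is a stabilization cycle, while the interior $\beta$ at position $j+1$ creates the branch at $v_j$; the terminal $\beta$ is precisely what prevents the cyclic trick above from collapsing everything to type $A$. With this corrected model your Hurwitz argument goes through verbatim: moving the interior $\beta$ two (resp.\ four) places to the right turns it into $\tau_\alpha^{2}(\beta)$ (resp.\ $\tau_\alpha^{4}(\beta)$) while leaving the terminal $\beta$ untouched, and Proposition \ref{prop diffeo} then identifies the smooth types. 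You correctly anticipated that pinning down the model was the delicate step; the fix is the second $\beta$.
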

\begin{proof}
	In order to prove Lemma \ref{lemma diffeomorphic families 1}, we would like to use a Lefschetz fibration on $P_{n+1}(T_m^j)$. 
	We note that \cite[Theorem 11.3]{Lee} gives an algorithm producing a Lefschetz fibration for a plumbing space $P_n(T)$ for any $T$. 
	Thus, by applying \cite[Theorem 11.3]{Lee} to $P_{n+1}(T_m^j)$, we can produce a desired Lefschetz fibration whose total space is $P_{n+1}(T_m^j)$. 
	The resulting Lefschetz fibration is 
	\begin{gather}
		\label{eqn LF}
		(A_2^{2n}; \alpha, \alpha, \cdots, \alpha, \beta, \alpha, \cdots, \alpha, \beta),
	\end{gather}
	such that 
	\begin{itemize}
		\item the total number of $\alpha$ in \eqref{eqn LF} is $(m+1)$, and
		\item the first $\beta$ in \eqref{eqn LF} is located at $(j+1)^{th}$ position in the collection of vanishing cycles. 
	\end{itemize}

	In the current paper, we omit a detailed proof for the statement that the total space of \eqref{eqn LF} is $P_n(T_m^j)$.
	However, we prove the statement for a special case.
	The special case we consider is $P_{n+1}(T_3^2)$. 
	
	What we want to show is that the total space of 
	\[(A_2^{2n}; \alpha, \alpha, \beta, \alpha, \alpha, \beta)\]
	is $P_{n+1}(T_3^2)$. 
	Figure \ref{figure T_3^2} describes the base of the above Lefschetz fibration. 
	\begin{figure}[h]
		\centering
		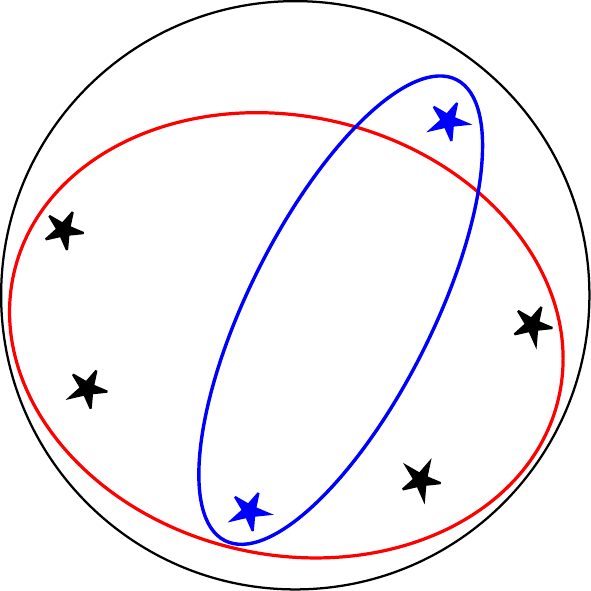		
		\caption{It describes the base of the given Lefschetz fibration. The black (resp.\  blue) star marks are singular values whose vanishing cycles are $\alpha$ (resp.\ $\beta$). The interior red (resp.\ blue) circle is the interior of sub-fibration defined on $W_1$ (resp.\ $W_2$).} 
		\label{figure T_3^2}
	\end{figure}
	For convenience, let $W$ denote the total space of the above abstract Lefschetz fibration. 
	
	We consider two submanifolds $W_1, W_2 \subset W$ by using the Lefschetz fibration on $W$.
	In order to define $W_1$, we consider the inverse image of the interior of the red circle in Figure \ref{figure T_3^2} under the Lefschetz fibration. 
	Roughly speaking, when we consider the Weinstein handle decomposition of $W$ corresponding to the abstract Lefschetz fibration, $W_1$ is a Weinstein domain obtained by `deleting' a critical handle from $W$. 
	Moreover, since $W_1$ could be seen as a total space of an abstract Lefschetz fibration 
	\[(A_2^{2n}; \alpha, \alpha, \beta, \alpha, \alpha),\]
	$W_1$ is equivalent to the Milnor fiber of $A_3$-type.  
	For the detail, see the proof of Lemma \ref{lemma strucutre of X}.
	
	Before defining $W_2$, we note that the fiber $A_2^{2n}$ can be seen as a union of two $T^*S^n$, or $T^*\alpha$ and $T^*\beta$ with the notation in Section \ref{section construction}. 
	Then, we define $W_2$ as the total space of the sub-fibration, whose fiber is $T^*\beta$, and whose base is the interior of the blue circle in Figure \ref{figure T_3^2}. 

	As similar to $W_1$, one could see $W_2$ as a union of corresponding Weinstein handles.
	Then, one could conclude that $W_2$ is a Weinstein domain equivalent to $T^*S^{n+1}$. 
	
	One could easily check that $W_1 \cup W_2$ is a neighborhood of the Lagrangian skeleton of $W$. 
	In order to obtain the Lagrangian skeleton, we use the Liouville structure which the abstract Lefschetz fibration induces. 
	It means that $W_1 \cup W_2$ is equivalent to $W$ up to symplectic completion. 
	Thus, it is enough to show that $W_1 \cup W_2$ is equivalent to $P_{n+1}(T_3^2)$. 
	
	We note that $P_{n+1}(T_3^2)$ is obtained by plumbing $T^*S^{n+1}$ to $P_{n+1}(A_3)$ where $A_3$ is the Dynkin diagram of $A_3$-type. 
	Since $W_1 \simeq P_{n+1}(A_3), W_2 \simeq T^*S^{n+1}$, it is enough to show that $W_1 \cup W_2$ is obtained by plumbing $W_1$ and $W_2$. 
	One can prove this easily by using the handle movements on $W_1$ and $W_2$.
	We omit the details, but instead we give Figure \ref{figure plumbing_base} describing a Lefschetz fibration type picture after handle slides.
	We note that Figure \ref{figure plumbing_base} is different from a general Lefschetz fibration picture, especially, the singular values are not lying on a circle whose center is the origin. 
	This is because we slide critical handles `onto' another critical handles.
	This is not allowed for a general Lefschetz fibration picture.
	\begin{figure}[h]
		\centering
		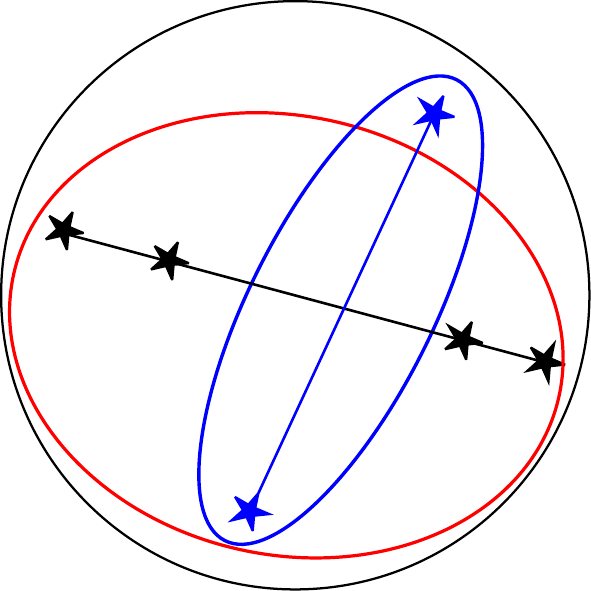		
		\caption{The black and blue curves are matching cycles.} 
		\label{figure plumbing_base}
	\end{figure}

	By applying the argument in Section \ref{section diffeo} to Lefschetz fibrations constructed above, one can prove Lemma \ref{lemma diffeomorphic families 1}. 
\end{proof}

The proof of Lemma \ref{lemma diffeomorphic families 1} also proves Lemma \ref{lemma diffeomorphic families 2}.

\begin{lemma}
	\label{lemma diffeomorphic families 2}
	If $n = 2$, then, $P_{n+1}(T_m^{m-1})$ and $Q_m^{2n+2}$ are diffeomorphic. 
	If $n \geq 4$ is even, $P_{n+1}(T_m^{m-3})$ and $Q_{m+1}^{2n+2}$ are diffeomorphic.
\end{lemma}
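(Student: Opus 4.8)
The plan is to run the argument of Lemma \ref{lemma diffeomorphic families 1} in the extreme configuration where the branch vanishing cycle is slid all the way onto the terminal one, so that the resulting Lefschetz fibration is of $Q$-type. I would start from the explicit Lefschetz fibration for $P_{n+1}(T_m^j)$ produced via \cite[Theorem 11.3]{Lee} in the proof of Lemma \ref{lemma diffeomorphic families 1}, namely \eqref{eqn LF}:
\[(A_2^{2n}; \underbrace{\alpha, \ldots, \alpha}_{j}, \beta, \underbrace{\alpha, \ldots, \alpha}_{m+1-j}, \beta),\]
in which the first (branch) $\beta$ occupies position $j+1$ and is separated from the terminal $\beta$ by exactly $m+1-j$ copies of $\alpha$. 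The point of taking $j=m-1$ when $n=2$, and $j=m-3$ when $n\ge 4$ is even, is precisely that the number $m+1-j$ of intervening copies of $\alpha$ then equals $2$ and $4$, respectively.

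Next I would use Hurwitz moves to transport the branch $\beta$ rightward across those $m+1-j$ copies of $\alpha$, exactly as in Lemma \ref{lemma diffeomorphic families 1}. Each move turns the moving cycle into its image under a single Dehn twist along the $\alpha$ it crosses, so after crossing all of them the collection becomes
\[(A_2^{2n}; \underbrace{\alpha, \ldots, \alpha}_{m+1}, (\tau_\alpha)^{m+1-j}(\beta), \beta),\]
with all copies of $\alpha$ gathered at the front and the two $\beta$-type cycles adjacent at the end.

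The decisive step is to replace $(\tau_\alpha)^{m+1-j}(\beta)$ by $\beta$ without altering the diffeomorphism type of the total space. This is exactly the formal-Legendrian computation of Section \ref{section diffeo}: the construction there produces a smooth Lagrangian isotopy between $\beta$ and $(\tau_\alpha)^2(\beta)$ for every even $n$, and the attaching Legendrians of $\beta$ and $(\tau_\alpha)^2(\beta)$ carry identical formal data when $n=2$ (as $\pi_3(V_{2,5},U_2)=0$), while for even $n\ge 4$ one must instead use $(\tau_\alpha)^4(\beta)$, so that the obstruction class in $\pi_{n+1}(V_{n,2n+1},U_n)\cong\mathbb{Z}/2$ of \eqref{eqn homotopy group of stiefel} is taken twice and hence vanishes. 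Since $m+1-j$ was arranged to equal $2$ (resp.\ $4$), the replacement is admissible, and we reach the total space of $(A_2^{2n}; \alpha, \ldots, \alpha, \beta, \beta)$, i.e.\ the defining abstract Lefschetz fibration \eqref{eqn Q_m^n} of the Weinstein manifold $Q^{2n+2}$ in the statement. As Hurwitz moves preserve the total space up to Weinstein homotopy and the formal-Legendrian replacement preserves it up to diffeomorphism, this yields the asserted diffeomorphism.

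I expect the only genuine difficulty to be the same one as in Lemma \ref{lemma diffeomorphic families 1}: verifying that the attaching data of $\beta$ and of $(\tau_\alpha)^{m+1-j}(\beta)$ differ only by a class in $\pi_{n+1}(V_{n,2n+1},U_n)$, and that the prescribed shift ($2$ for $n=2$, $4$ for even $n\ge 4$) kills that class. This is exactly what forces the two different superscripts $m-1$ and $m-3$ in the statement, and I would simply invoke the local computation already carried out in Section \ref{section diffeo}, since the model near the branch $\beta$ is identical to the one treated there. The combinatorics of the Hurwitz moves themselves are routine and mirror those in the proof of Lemma \ref{lemma diffeomorphic family of Z}.
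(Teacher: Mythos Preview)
Your argument is correct and is exactly what the paper intends: its proof of Lemma~\ref{lemma diffeomorphic families 2} is the single sentence ``apply the same argument to the abstract Lefschetz fibrations in \eqref{eqn Q_m^n} and \eqref{eqn LF}'', and you have spelled out precisely that argument (Hurwitz-shift the branch $\beta$ across the $m+1-j$ intervening $\alpha$'s and then invoke the formal Legendrian replacement of Section~\ref{section diffeo}). One bookkeeping remark: your computation lands on $(A_2^{2n};\alpha^{m+1},\beta,\beta)=Q_{m+1}^{2n+2}$, which matches the $n\ge 4$ clause and the applications in Corollary~\ref{cor diffeomorphic list 1}; the subscript $m$ in the $n=2$ clause of the stated lemma is a misprint (indeed $P_{n+1}(T_m^{m-1})$ and $Q_m^{2n+2}$ have middle Betti numbers $m+1$ and $m$, so cannot be diffeomorphic).
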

\begin{proof}
	By applying the same argument to the abstract Lefschetz fibrations in \eqref{eqn Q_m^n} and \eqref{eqn LF}, one can prove Lemma \ref{lemma diffeomorphic families 2}.
\end{proof}

From Lemmas \ref{lemma diffeomorphic families 1} and \ref{lemma diffeomorphic families 2}, we obtain diffeomorphic families such that some of whose members are well-studied plumbing spaces. 
The list of the families is given in Corollaries \ref{cor diffeomorphic list 1} and \ref{cor diffeomorphic list 2}. 

\begin{corollary}
\label{cor diffeomorphic list 1}
We have a following list of Weinstein manifolds which are diffeomorphic to each other, if their dimensions are $6= 2n+2$, i.e., $n=2$. 
\begin{itemize}
	\item {\em The Milnor fibers of $A_6$ and $E_6$-singularities are diffeomorphic}, since they are $P_{n+1}(T_5^1)$ and $P_{n+1}(T_5^3)$, respectively.
	\item {\em The Weinstein manifold $Q_6^{2n+2}$ and the Milnor fibers of $A_7$, $E_7$, and $D_7$-singularities are diffeomorphic}, since the Milnor fibers are $P_{n+1}(T_6^1), P_{n+1}(T_6^3)$, and $P_{n+1}(T_6^5)$, respectively.
	\item {\em The Milnor fibers of $A_8$ and $E_8$-singularities are diffeomorphic}, since they are $P_{n+1}(T_7^1)$ and $P_{n+1}(T_7^3)$, respectively.
	\item For any $m \geq 3$, {\em the Weinstein manifold $Q_{m+1}^{2n+2}$ and the Minor fiber of $D_{m+1}$-singularities are diffeomorphic}, since the Milnor fiber is $P_{n+1}(T_m^{m-1})$. 
	\item For $k \geq 2$, {\em the Weinstein manifold $Q_{2k+1}^{2n+2}$ and the Milnor fibers of $A_{2k+1}, D_{2k+1}$-singularities are diffeomorphic}, since the Milnor fibers are $P_{n+1}(T_{2k}^1)$ and $P_{n+1}(T_{2k}^{2k-1})$, respectively.
\end{itemize}
\end{corollary}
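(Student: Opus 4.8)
The plan is to reduce every entry appearing in the five bullets to a plumbing $P_{n+1}(T_m^j)$ and then chain the entries together by transitivity, using Lemmas \ref{lemma diffeomorphic families 1} and \ref{lemma diffeomorphic families 2}, both of which already supply honest diffeomorphisms (not merely Weinstein homotopies) in the case $n=2$. The first ingredient is the classical fact that the $(2n+2)$-dimensional Milnor fiber of a simple ($ADE$) singularity is the plumbing of copies of $T^*S^{n+1}$ along the corresponding $ADE$ Dynkin diagram; for $A$-type this is exactly the Lefschetz picture built from $\rho$ in Section \ref{subsection construction}, where $A_m^{2n+2}$ was already identified with the Milnor fiber of $A_m$-type. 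Thus it suffices to match each Dynkin diagram with one of the trees $T_m^j$ of Definition \ref{def T_m^j}.

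The combinatorial matching is the core of the argument. Recall that $T_m^j$ is the chain $v_1 - \cdots - v_m$ with one extra leaf $v_{m+1}$ attached at $v_j$. Hence $T_m^1$ (leaf at an endpoint) is just the chain on $m+1$ vertices, i.e.\ the $A_{m+1}$ diagram; $T_m^{m-1}$ (leaf at the next-to-last vertex) is the $D_{m+1}$ diagram; and $T_5^3$, $T_6^3$, $T_7^3$ (leaf at the third vertex of chains of length $5,6,7$) are the $E_6$, $E_7$, $E_8$ diagrams. This yields the identifications asserted in the bullets, e.g.\ the Milnor fiber of $A_6$ is $P_3(T_5^1)$ and that of $E_6$ is $P_3(T_5^3)$, and so on.

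With these identifications in hand, each bullet is proved by transitivity. Within a fixed bullet all entries share the same number of vertices, hence the same $m$, and the relevant leaf positions $j$ all have the same parity and differ by $2$: for instance $\{1,3\}$ for $A_6,E_6$; $\{1,3,5\}$ for $A_7,E_7,D_7$; $\{1,3\}$ for $A_8,E_8$; and $\{1,3,\dots,2k-1\}$ for $A_{2k+1},D_{2k+1}$. Since $n=2$, Lemma \ref{lemma diffeomorphic families 1} gives $P_3(T_m^j)\cong P_3(T_m^{j+2})$, and iterating it connects all the $P_{n+1}$-entries of each bullet. Finally, the entries $Q_\bullet^{2n+2}$ are brought in through Lemma \ref{lemma diffeomorphic families 2}, which identifies the $D$-type plumbing $P_{n+1}(T_m^{m-1})$ with the appropriate $Q$-manifold; composed with the previous chain, this places each $Q$ in the same diffeomorphism class as the $A$-, $D$-, and $E$-type members of its bullet.

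The step I expect to be the main obstacle is not any geometric input — the two lemmas already carry out the symplectic and isotopy work — but the careful bookkeeping of indices. One must (i) use the classical plumbing description of each $ADE$ Milnor fiber with the correct convention for where the $E$-branch attaches (the third node from the end), and (ii) track the exact value of the $Q$-subscript produced by Lemma \ref{lemma diffeomorphic families 2}: the Hurwitz moves that slide the first $\beta$ past the two trailing $\alpha$'s turn the distinguished collection into $(\alpha,\dots,\alpha,\tau_\alpha^2(\beta),\beta)$ carrying one more $\alpha$ than one might naively expect, and only after replacing $\tau_\alpha^2(\beta)$ by $\beta$ (the $n=2$ formal-Legendrian triviality from Proposition \ref{prop diffeo}) does one read off the correct $Q_{m+1}^{2n+2}$. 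Keeping this off-by-one consistent across the second, fourth, and fifth bullets is the only place an error can realistically enter.
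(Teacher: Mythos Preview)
Your proposal is correct and follows the same route the paper takes: the paper presents the corollary as an immediate consequence of Lemmas \ref{lemma diffeomorphic families 1} and \ref{lemma diffeomorphic families 2} without further argument, and what you have written is precisely the intended filling-in of those details---identifying each $ADE$ Dynkin diagram with the appropriate $T_m^j$, chaining the plumbings $P_3(T_m^j)\cong P_3(T_m^{j+2})$ via Lemma \ref{lemma diffeomorphic families 1}, and attaching the $Q$-manifolds via Lemma \ref{lemma diffeomorphic families 2}. Your explicit caution about the $Q$-subscript is well placed: the paper's own indexing is not entirely consistent between the statement of Lemma \ref{lemma diffeomorphic families 2}, the second bullet, and the fourth and fifth bullets of this corollary (compare also Theorem \ref{thm exotic families}), and the computation you sketch---moving the first $\beta$ past the two trailing $\alpha$'s and then invoking the $n=2$ formal-Legendrian triviality---is exactly how one pins down the correct index.
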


\begin{corollary}
	\label{cor diffeomorphic list 2}
	We have a following list of Weinstein manifolds which are diffeomorphic to each other if their dimensions are $2n+2 \geq 8$ with even $n \geq 4$. 
	\begin{itemize}
		\item {\em The Milnor fiber of $E_7$-singularity and $Q_7^{2n+2}$ are diffeomorphic}, since the Milnor fiber is $P_{n+1}(T_6^3)$.
		\item {\em The Milnor fibers of $A_8$ and $E_8$-singularities are diffeomorphic}, since the Milnor fibers are $P_{n+1}(T_7^1)$ and $P_{n+1}(T_7^5)$, respectively.  
		\item {\em The Milnor fiber of $A_{4k+1}$-singularity and $Q_{4k+1}^{2n+2}$ are diffeomorphic}, since the Milnor fiber is $P_{n+1}(T_{4k}^1)$.
		\item {\em The Milnor fiber of $D_{4k+2}$-singularity and $Q_{4k+2}^{2n+2}$ are diffeomorphic}, since the Milnor fiber is $P_{n+1}(T_{4k+1}^2)$. 
		\item {\em The Milnor fibers of $A_{4k+3}$ and $D_{4k+3}$-singularities are diffeomorphic}, since the Milnor fibers are $P_{n+1}(T_{4k+2}^1)$ and $P_{n+1}(T_{4k+1}^2)$, respectively.
	\end{itemize}
\end{corollary}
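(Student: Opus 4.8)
The plan is to reduce every diffeomorphism in the list to repeated applications of Lemmas \ref{lemma diffeomorphic families 1} and \ref{lemma diffeomorphic families 2}, once each Milnor fiber has been rewritten as a plumbing $P_{n+1}(T_m^j)$. First I would recall the classical fact that the Milnor fiber of a simple singularity of $ADE$ type is Weinstein homotopic to the plumbing of copies of $T^*S^{n+1}$ along the corresponding Dynkin diagram. Matching Dynkin diagrams to the trees of Definition \ref{def T_m^j}, the path on $m+1$ vertices is $T_m^1$, so the Milnor fiber of $A_{m+1}$ is $P_{n+1}(T_m^1)$; the $D_{m+1}$ diagram (a path with a single fork one step from the end) is $T_m^{m-1}$; and the exceptional diagrams satisfy $E_6 = T_5^3$, $E_7 = T_6^3$, $E_8 = T_7^3$. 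I would also record the elementary tree isomorphism obtained by reversing the underlying path, $T_m^j \cong T_m^{m+1-j}$, which gives $P_{n+1}(T_m^j) = P_{n+1}(T_m^{m+1-j})$; in particular $E_8 = P_{n+1}(T_7^3) = P_{n+1}(T_7^5)$ and $D_{m+1} = P_{n+1}(T_m^{m-1}) = P_{n+1}(T_m^2)$.

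With these identifications in hand, the remaining work is pure bookkeeping: Lemma \ref{lemma diffeomorphic families 1} lets me change the attaching index $j$ by $\pm 4$ while keeping $m$ fixed, and Lemma \ref{lemma diffeomorphic families 2} converts $P_{n+1}(T_m^{m-3})$ into $Q_{m+1}^{2n+2}$. So for each item I would exhibit a chain of such moves. Explicitly, the first item is immediate from Lemma \ref{lemma diffeomorphic families 2} with $m=6$, since $E_7 = P_{n+1}(T_6^3) = P_{n+1}(T_6^{6-3})$ is diffeomorphic to $Q_7^{2n+2}$. The second item follows from a single application of Lemma \ref{lemma diffeomorphic families 1} with $m=7$, $j=1$, giving $A_8 = P_{n+1}(T_7^1) \cong P_{n+1}(T_7^5) = E_8$.

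For the remaining three items the chains are longer but identical in spirit. For the third item I would iterate Lemma \ref{lemma diffeomorphic families 1} with $m = 4k$ through the indices $1, 5, 9, \ldots, 4k-3$ (that is, $k-1$ steps) and then apply Lemma \ref{lemma diffeomorphic families 2}, obtaining
\[
A_{4k+1} = P_{n+1}(T_{4k}^1) \cong P_{n+1}(T_{4k}^5) \cong \cdots \cong P_{n+1}(T_{4k}^{4k-3}) \cong Q_{4k+1}^{2n+2}.
\]
For the fourth item, using $D_{4k+2} = P_{n+1}(T_{4k+1}^2)$ I would iterate Lemma \ref{lemma diffeomorphic families 1} with $m = 4k+1$ through $2, 6, \ldots, 4k-2$ and finish with Lemma \ref{lemma diffeomorphic families 2} at $m-3 = 4k-2$, producing a diffeomorphism to $Q_{4k+2}^{2n+2}$. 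For the fifth item I would start from $A_{4k+3} = P_{n+1}(T_{4k+2}^1)$ and iterate Lemma \ref{lemma diffeomorphic families 1} with $m = 4k+2$ through $1, 5, \ldots, 4k+1$ ($k$ steps), landing on $P_{n+1}(T_{4k+2}^{4k+1}) = P_{n+1}(T_{4k+2}^{(4k+2)-1}) = D_{4k+3}$; here the reversal isomorphism identifies $T_{4k+2}^{4k+1}$ with $T_{4k+2}^2$, the form appearing in the statement.

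I do not expect a genuine obstacle: the only inputs beyond Lemmas \ref{lemma diffeomorphic families 1} and \ref{lemma diffeomorphic families 2} are the classical plumbing description of $ADE$ Milnor fibers and the path-reversal symmetry of the trees $T_m^j$. The one point requiring care is the arithmetic of the attaching indices — checking in each case that the source and target indices are congruent modulo $4$ (so that Lemma \ref{lemma diffeomorphic families 1} connects them) and that the terminal index matches the pattern $m-3$ demanded by Lemma \ref{lemma diffeomorphic families 2}, occasionally after applying the reversal isomorphism $T_m^j \cong T_m^{m+1-j}$. This is exactly the mechanism used to prove Corollary \ref{cor diffeomorphic list 1} in the $n=2$ case, with the index shift $+2$ there replaced by $+4$ here, reflecting the $\mathbb{Z}/2$ rather than trivial obstruction recorded in Equation \eqref{eqn homotopy group of stiefel}.
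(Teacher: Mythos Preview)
Your proposal is correct and matches the paper's intended argument: the corollary is stated in the paper without a separate proof, as an immediate consequence of Lemmas \ref{lemma diffeomorphic families 1} and \ref{lemma diffeomorphic families 2}, and your write-up spells out exactly those iterations together with the identification of each $ADE$ Milnor fiber as a plumbing $P_{n+1}(T_m^j)$. Your explicit use of the path-reversal isomorphism $T_m^j \cong T_m^{m+1-j}$ is a helpful clarification that the paper leaves implicit (it is needed, for instance, to identify $E_8$ with $P_{n+1}(T_7^5)$ rather than $P_{n+1}(T_7^3)$, and to recognize $D_{4k+2}$ as $P_{n+1}(T_{4k+1}^2)$); you also quietly correct what appears to be a typo in the paper's fifth item, where $D_{4k+3}$ should be $P_{n+1}(T_{4k+2}^2)$ rather than $P_{n+1}(T_{4k+1}^2)$.
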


\begin{remark}
When we consider the Milnor fibers of simple singularities having dimension $2$, then we can check Corollaries \ref{cor diffeomorphic list 1} and \ref{cor diffeomorphic list 2}, without using Lefschetz fibrations.
Because of the dimension reason, we only need to compare their Euler characteristics and the numbers of boundary components.
For example, $A_7, D_7$ and $E_7$ have the same Euler characteristics, and the numbers of their boundary components are $2$. 
Thus, they are diffeomorphic to each other.

Moreover, it is also simple to compare them as Weinstein manifolds if the dimension is two.  
However, for the case of higher dimension, it would be not simple to compare them as Weinstein manifolds. 
In the next section, we will show that they are different as Weinstein manifolds in dimension $\geq 6$. 
\end{remark}

We note that we can have bigger diffeomorphic families from Lemmas \ref{lemma diffeomorphic families 1} and \ref{lemma diffeomorphic families 2}, but we only consider smaller families in Corollaries \ref{cor diffeomorphic list 1} and \ref{cor diffeomorphic list 2}.
This is because, in order to compare them as Weinstein manifolds, we need to compute their symplectic invariant.
For the named spaces which are contained in the smaller families, the computations are well-studied, and we would like to use the well-studied computations. 

For the other plumbing spaces which are not contained in the smaller families, we are working on comparing them as Weinstein manifolds.  

\subsection{Distinguishing symplectic cohomologies}
\label{subsection exotic families}
In this section, we compare the Weinstein structures of the listed Weinstein manifolds. 
We note that for this subsection, we do not need to separate the case of $n=2$ and the case of even $n \geq 4$. 
Moreover, even for the case of odd $n$, the results in this subsection hold.

Before proving Theorem \ref{thm exotic families}, we note that Milnor fibers of ADE-types can be described by Milnor fibers of invertible polynomials.
In other words, there is an invertible matrix $(A_W)_{ij}=a_{ij}$ so that we can write the Milnor fibers as $W^{-1}(1)$ where
\[W=\sum_{i=1}^{n+1} \prod_{j=1}^{n+1} x_j^{a_{ij}}.\]
If we define its transpose $W^T$ as a Milnor fiber of an invertible polynomial with the exponent matrix $A_W^T$, then we obtain the following list of polynomials for Milnor fibers of $ADE$-types and their transposes. 
\begin{itemize}
	\item $W_{A_m} = x_1^{m+1} + x_2^2+ x_3^2 +\cdots +x_{n+1}^2, \hskip 0.3cm W_{A_m}^T = W_{A_m},$
	\item $W_{D_m} = x_1^{m-1}+ x_1x_2^2 +x_3^2 \cdots + x_{n+1}^2,  \hskip 0.3cm W_{D_m}^T = x_1^{m-1}x_2 + x_2^2 +x_3^2 \cdots + x_{n+1}^2,$
	\item $W_{E_6}= x_1^4 + x_2^3 +x_3^2 +\cdots +x_{n+1}^2,\hskip 0.3cm W_{E_6}^T = W_{E_6},$
	\item $W_{E_7}= x_1^3 + x_1 x_2^3 +x_3^2 +\cdots +x_{n+1}^2, \hskip 0.3cm W_{E_7}^T = x_1^3 x_2 + x_2^3 +x_3^2 +\cdots +x_{n+1}^2,$
	\item $W_{E_8}= x_1^5 + x_2^3 +x_3^2 +\cdots +x_{n+1}^2, \hskip 0.3cm W_{E_8}^T = W_{E_8}.$
\end{itemize} 

With the above argument, \cite{Lekili-Ueda} gives Proposition \ref{prop SH computation}.

\begin{proposition}(Theorem 1.2 \cite{Lekili-Ueda})
	\label{prop SH computation}
	Let $W$ be a polynomial of $ADE$-type for $n \geq 2$. Then,
	\begin{enumerate} 
		\item we have an equivalence of categories 
		\[WFuk\left(W^{-1}(1)\right) \simeq \mathrm{MF}\left(W^T+x_0x_1\cdots x_{n+1}, \Gamma_{W^T}\right).\]
		The right handed side is a category of $\Gamma_{W^T}$-equivariant matrix factorizations. Here $\Gamma_{W^T}$ is a maximal group of abelian symmetries such that $W^T+\prod_{i=0}^{n+1}x_i$ becomes a semi-invariant, i.e.,
		\[\Gamma_{W^T}:= \left\{(\lambda_0, \ldots, \lambda_{n+1}): W^T(\lambda_1x_1, \ldots, \lambda_{n+1}x_{n+1})+\prod_{i=0}^{n+1}(\lambda_ix_i) = \lambda(W^T+\prod_{i=0}^{n+1}x_i) \right\}.\]
		\item $SH^*\left(W^{-1}(1)\right) \simeq HH^*\left(\mathrm{MF}\left(W^T+x_0x_1\cdots x_{n+1}, \Gamma_{W^T}\right)\right)$
	\end{enumerate}
	Also, one can compute $SH^*$ using the second isomorphism combined with results of \cite{Ballard-Favero-Katzarkov}. 
\end{proposition}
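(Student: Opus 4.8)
Both statements are of homological-mirror-symmetry type: part (1) is the Berglund--H\"ubsch duality identifying the wrapped Fukaya category of the Milnor fiber $W^{-1}(1)$ with equivariant matrix factorizations of the transpose potential, and part (2) reads off symplectic cohomology as the Hochschild cohomology of either side. Since (2) is a formal consequence of (1), the plan is to prove (1) first and then extract (2).

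For part (1), I would reduce both categories to an explicit $A_\infty$-algebra on a common graded vector space and then invoke a moduli-of-$A_\infty$-structures argument. On the A-side, projecting $W^{-1}(1)$ to a coordinate axis produces a Lefschetz fibration whose thimbles generate $WFuk(W^{-1}(1))$ (generation by cocores, in the sense of Ganatra--Pardon--Shende); their Floer endomorphism algebra is a directed $A_\infty$-algebra $\mathcal{A}$, and after wrapping one obtains an explicit model for the entire category. On the B-side, I would choose a tilting generator of $\mathrm{MF}(W^T + x_0 x_1 \cdots x_{n+1}, \Gamma_{W^T})$, built from the structure sheaf and its characters under $\Gamma_{W^T}$, and compute its dg-endomorphism algebra $\mathcal{B}$ via periodic Koszul resolutions. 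A direct count then shows that the cohomology-level algebras $H^*(\mathcal{A})$ and $H^*(\mathcal{B})$ coincide with a single graded algebra $\mathcal{H}$ governed by the underlying $ADE$ quiver with its (pre)projective relations.

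The heart of the argument is to show that $\mathcal{A}$ and $\mathcal{B}$, viewed as $A_\infty$-structures on $\mathcal{H}$, are gauge-equivalent. For this I would compute the piece of Hochschild cohomology $HH^*(\mathcal{H})$ controlling the deformation theory, verify that it is concentrated in degrees forcing the moduli space of $A_\infty$-structures, modulo formal diffeomorphisms and the residual grading symmetries from $\Gamma_{W^T}$, to be a single point or a small affine space, and then pin down the class of each side by matching one nonformal invariant such as a specific higher product. Beforehand one should reduce to the curve case $n=1$: Kn\"orrer periodicity on the B-side and stabilization of the Lefschetz fibration on the A-side both implement the suspension $W \mapsto W + x_{n+1}^2$ compatibly, so it suffices to treat the base case and transport the equivalence upward.

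Part (2) then follows formally. The closed--open map $SH^*(W^{-1}(1)) \to HH^*(WFuk(W^{-1}(1)))$ is an isomorphism precisely because the thimbles generate the wrapped category (nondegeneracy), and Hochschild cohomology is invariant under the equivalence of part (1), so that
\[SH^*(W^{-1}(1)) \simeq HH^*\big(WFuk(W^{-1}(1))\big) \simeq HH^*\big(\mathrm{MF}(W^T + x_0 x_1 \cdots x_{n+1}, \Gamma_{W^T})\big).\]
The main obstacle throughout is the control of higher $A_\infty$-products in the equivariant setting: proving that the Hochschild-cohomology obstruction space is small enough to force uniqueness of the $A_\infty$-structure, while correctly bookkeeping the $\Gamma_{W^T}$-grading and signs, is the delicate step. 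By comparison, the base-case identification of the graded algebra $\mathcal{H}$ and the closed--open comparison in part (2) are comparatively routine.
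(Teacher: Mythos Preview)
The paper does not prove this proposition. Immediately after the statement it reads: ``We do not prove Proposition \ref{prop SH computation}, but an interested reader can find a detailed computation in \cite[Section 5]{Lekili-Ueda}.'' The result is quoted from Lekili--Ueda and used as a black box to compute $SH^*$ of the Milnor fibers appearing in Corollaries \ref{cor diffeomorphic list 1} and \ref{cor diffeomorphic list 2}.

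Your proposal, by contrast, is an outline of how one would actually establish the equivalence, and it is broadly in the spirit of the Lekili--Ueda argument: generate the A-side by Lefschetz thimbles, generate the B-side by a tilting object, match the cohomology-level endomorphism algebras, and then rigidify the $A_\infty$-structure via a Hochschild-cohomology computation, with Kn\"orrer periodicity and stabilization handling the dependence on $n$. The deduction of part (2) from part (1) via the closed--open map and Morita invariance of Hochschild cohomology is also the standard route. As a sketch this is sound; the genuinely hard steps you flag (controlling the deformation space of $A_\infty$-structures with the correct $\Gamma_{W^T}$-grading, and verifying smoothness/properness so that the closed--open map is an isomorphism) are exactly where the work lies, and you have not carried them out here. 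But none of that is expected in this paper: for the purposes of Section \ref{subsection exotic families} the correct ``proof'' is simply the citation.
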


We do not prove Proposition \ref{prop SH computation}, but an interested reader can find a detailed computation in \cite[Section 5]{Lekili-Ueda}. 

\begin{proof}[Proof of Theorem \ref{thm exotic families}.]
We would like to point out that members in the families are Milnor fibers of $ADE$-types and $Q_m^{2n+2}$. 
The symplectic cohomology rings of Milnor fibers of simple singularities have been computed using homological mirror symmetry as mentioned in Proposition \ref{prop SH computation}. 
On the other hand, Lemma \ref{lemma wrapped Fukaya of Y} implies that
\[SH^*(Q^{2n}_{m+1}) \simeq SH^*(P_n(T^1_m)) \times SH^*(T^*S^n).\]
We note that $P_n(T_m^1)$ is the Milnor fiber of $A_{m+1}$-type. 
Thus, one can compute symplectic cohomologies of all Weinstein manifolds in Corollaries \ref{cor diffeomorphic list 1} and \ref{cor diffeomorphic list 2}.

From \cite{Lekili-Ueda}, we can check the following facts.
\begin{itemize}
	\item Milnor fibers of $A_k, D_k$ and $E_k$-types have different symplectic cohomologies, because the contributions from twisted sectors of each $HH^*(\mathrm{MF})$ are different from each other.
	\item  A symplectic cohomology carries extra information of weights. 
	For matrix factorizations, weights are coming from the weights of variables of $W^T$. 
	A mirror $\mathbb C^*$-action on a symplectic cohomology ring can be found in \cite{Seidel-Solomon}.
	The weights of the each generators of $SH^*$ are equal or less than their degrees. 
	There is only one family of generators (other then unit) of $SH^*$ such that whose weight and degree are the same.
	Every generator in the family has weight $n$ and degree $n$.
	\item In $SH^*(Q_m^{2n}) \simeq SH^*(P_n(T_m^1)) \times SH^*(T^*S^n)$, there is an element of degree $2n$ and weight $2n$. The element is a product of elements in $SH^*(P_n(T_m^1))$ and $SH^*(T^*S^n)$ whose degrees and weights are $n$. 
	Moreover, there are no such elements inside $SH^*(P_n(T_m^1)), SH^*(P_n(T_m^{m-1}))$, or $SH^*(E_7)$.
\end{itemize}

The above three prove that Weinstein manifolds in each families in Corollaries \ref{cor diffeomorphic list 1} and \ref{cor diffeomorphic list 2} have different symplectic cohomologies.
This completes the proof.  
\end{proof}

\bibliographystyle{amsalpha}
\bibliography{exotic}

\end{document}